\newcommand\eps{\varepsilon}
\newcommand\R{{\mathbb{R}}}
\newcommand\C{{\mathbb{C}}}
\newcommand\Z{{\mathbb{Z}}}
\newcommand\N{{\mathbb{N}}}
\DeclareMathOperator*{\supp}{{Supp}}
\renewcommand{\hat}[1]{\widehat{#1}}
\DeclareMathOperator*{\dist}{dist}
\DeclareMathOperator*{\diam}{diam}
\DeclareMathOperator*{\wlim}{wk-lim}
\newcommand{\jp}[1]{\langle{#1}\rangle}
\newcommand{\jpn}{\langle\nabla\rangle}
\newcommand{\qtq}[1]{\quad\text{#1}\quad}
\theoremstyle{plain}
  \newtheorem{theorem}[subsection]{Theorem}
  \newtheorem{proposition}[subsection]{Proposition}
  \newtheorem{lemma}[subsection]{Lemma}
  \newtheorem{corollary}[subsection]{Corollary}
\theoremstyle{remark}
\theoremstyle{definition}
  \newtheorem{definition}[subsection]{Definition}  %
\begin{document}
\title[Profile decomposition for 4th order Schr\"odinger]{Linear profile decompositions for a family of fourth order Schr\"odinger equations}
\author{Jin-Cheng Jiang}
\address{Department of Mathematics, National Tsing Hua University, Hsinchu, Taiwan 30013, R.O.C.}
\email{jcjiang@math.nthu.edu.tw}
\author{Shuanglin Shao}
\address{Department of Mathematics, University of Kansas, Lawrence, KS  66045}
\email{slshao@ku.edu}
\author{Betsy Stovall}
\address{Department of Mathematics, University of Wisconsin, Madison, WI 53706}
\email{stovall@math.wisc.edu}

\begin{abstract}
We establish linear profile decompositions for the fourth order Schr\"odinger equation and for certain fourth order perturbations of the Schr\"odinger equation, in dimensions greater than or equal to two.  We apply these results to prove dichotomy results on the existence of extremizers for the associated Stein--Tomas/Strichartz inequalities; along the way, we also obtain lower bounds for the norms of these operators.  
\end{abstract}
\date{\today}

\maketitle

\section{Introduction}

We consider the family of fourth order Schr\"odinger equations 
\begin{equation} \label{E:4NLS}
\begin{cases}
&iu_t + \Delta^2 u - \mu\Delta u = 0, \qquad \mu \geq 0, \\
&u(0) = u_0 \in L^2_x(\R^d)
\end{cases}
\end{equation}
where $u:\R \times \R^d \to \C$, $d \geq 2$.  For $\mu \neq 0$, this is the free form of the nonlinear Schr\"odinger equation with a fourth order perturbation; this equation was introduced by Karpman \cite{Karpman91} (see also \cite{KS91, K961, K962, KS00}) to study the effects of higher order dispersion in the propagation of solitary waves in plasmas.  

The main result of this article (Theorem~\ref{thm:linear-profile}) is a linear profile decomposition for the equations given by \eqref{E:4NLS}.  The theorem roughly states that, after passing to a subsequence, an $L^2_x$-bounded sequence of initial data may be decomposed as sum of asymptotically orthogonal pieces that are compact modulo symmetries, plus an error term with arbitrarily small dispersion.  We then use this result to obtain dichotomy results on the existence of extremizers to the inequalities
$$
\|(\mu+|\nabla|^2)^{\frac d{2(d+2)}} e^{it(\Delta^2-\mu\Delta)} f\|_{L^{\frac{2(d+2)}d}_{t,x}} \leq C_{d,\mu} \|f\|_{L^2_x}, 
$$
in the spirit of Shao \cite{ShaoAiry}.  Our results generalize those of Jiang, Pausader and Shao \cite{Jiang-Pausader-Shao:2010:4th-NLS}, wherein the analogous theorems were proved in the one dimensional case.  Though the results we obtain are similar, we encounter new challenges in higher dimensions.  One reason for this is that the propagator $e^{it(\Delta^2-\mu\Delta)}$ may be viewed as a Fourier extension operator, and the analysis of such operators seems to be much more difficult in dimensions greater than or equal to two.  

We are additionally motivated by recent applications of linear profile decompositions to the study of other dispersive equations, including wave \cite{Bahouri-Gerard:1999:profile-wave, KMActa08}, 
Schr\"odinger \cite{Begout-Vargas:2007:profile-schrod-higher-d, Merle-Vega:1998:profile-schrod, Carles-Keraani:2007:profile-schrod-1d, KMinvent06, KTV, KVZ}, 
KdV \cite{ShaoAiry, KKSV}, 
and Klein--Gordon \cite{KSVtams}.  

Finally, much of the argument seems amenable to an extension to more general perturbations of the Schr\"odinger equation, for instance with $\Delta^2$ replaced by $|\nabla|^\alpha$ for $\alpha>0$, but the authors have not investigated the extent to which this argument would need to be changed.  

We now turn to a brief outline of the proof.  For the remainder of this article, we let $S_\mu(t)$ denote the data-to-solution map,
$$
S_\mu(t) = e^{it(\Delta^2-\mu\Delta)}
$$
and let $D_\mu$ denote the differential operator
$$
D_\mu = \sqrt{\mu+|\nabla|^2}.
$$

Section~\ref{S:refined st} is devoted to a proof of a refinement (Proposition~\ref{P:refined Strichartz}) of the Strichartz/Stein--Tomas inequality 
\begin{equation} \label{E:ST}
\|D_\mu^{\frac d{d+2}} S_\mu(t) f\|_{L^{\frac{2(d+2)}d}_{t,x}} \lesssim \|f\|_{L^2_x}.
\end{equation}
Roughly, this result states that if the left side of this inequality is greater than a constant times the right side, the function $f$ must contain a nontrivial wave packet that is concentrated on a small `cap' on the Fourier side.  To overcome the difficulty of vanishing Gaussian curvature at the origin of the quartic surface, we reduce estimating it to an annulus $\{|\xi|\sim 1\}$, where after standard normalizations the results of Tao on bilinear paraboloid restriction can be applied. This result is stronger than the annular refinement obtained in Chae, Hong and Lee \cite[Proposition~2.3]{CHL} (and necessary for our finer-scale decomposition), and its proof strongly relies on Tao's bilinear restriction theorem for elliptic hypersurfaces  from Tao \cite{Tao:2003:paraboloid-restri}.  One challenge that we face in proving the refined Strichartz inequality for these fourth order equations (compared with wave, Schr\"odinger, or Klein--Gordon) is the lack of scale invariance when $\mu\neq 0$, coupled with the absence of any natural analogue of the Lorentz or Galilei boosts.  (The phase shifts $S_\mu(t)f \mapsto S_\mu(t)e^{i(\cdot)a}f$ will provide a rough stand-in.)

Once we have obtained this refinement, we turn in Section~\ref{S:lpd} to the proof of the linear profile decomposition.  This result, Theorem~\ref{thm:linear-profile}, follows by a familiar inductive argument.  If $(u_n)$ is an $L^2_x$-bounded sequence such that $\|D_\mu^{\frac d{d+2}}S_\mu(t)u_n\|_{L^{\frac{2(d+2)}d}_{t,x}}$ does not tend to 0,  by the refined Strichartz estimate, there exists a sequence $\{g_n^1\}$ of pseudo-symmetries (true symmetries, i.e.\ spacetime translations, composed with scalings and phase shifts) such that the sequence $(g_n^1)^{-1} u_n$ has a nonzero weak limit $\phi^1 \in L^2_x$.  The first profile is $\phi_n^1 = g_n^1 \phi^1$, and we then repeat the argument on the sequence $u_n-\phi_n^1$.  In fact, the refined Strichartz estimate gives a quantitative lower bound on the $L^2_x$ norm of $\phi^1$, and it is this that allows us to eventually show that for large $l$, the error terms $w_n^l$ are negligible.  Having established the linear profile decomposition, for our application, we need to prove that the pieces $S_\mu(t) \phi_n^j$ and $S_\mu(t)\phi_n^k$ are asymptotically orthogonal for $j \neq k$.  This is the content of Proposition~\ref{P:ST ortho}.  

Finally, in Section~\ref{S:extreme}, we apply the linear profile decomposition to prove Theorems~\ref{T:dichotomy 0} and~\ref{T:dichotomy 1}, which give lower bounds for the operator norms and dichotomy results on the existence of extremizers to \eqref{E:ST} when $\mu=0$ or $1$ (by scaling, this extends to the general case).  Very roughly, because of the asymptotic orthogonality of the profiles in the decomposition, after passing to a subsequence, an extremizing sequence to \eqref{E:ST} must contain a single profile.  After passing to a subsequence, there are three possibilities:  compactness, convergence to a free Schr\"odinger wave, or convergence to a free $S_0$ wave.  In the first case, extremizers exist.  In the latter two, extremizers may fail to exist, but these cases give us the desired lower bounds on the operator norms.  

\textbf{Acknowledgements.}  The research of the first author was supported by National Science
Council Grant NSC100-2115-M-007-009-MY2.  The second author was supported by NSF DMS-1160981 and KU 2016-2017 general research fund. The third author was supported by NSF DMS-0902667 and 1266336.

%%%%%%%%%%%%%%%%%%%%%%%%%%%%%%%%%%
%%%%%%%%%%%%%%%%%%%%%%%%%%%%%%%%%%
%%%%%%%%%%%%%%%%%%%%%%%%%%%%%%%%%%

\section{The refined Strichartz inequality}\label{S:refined st}

%%%%%%%%%%%%%%%%%%%%%%%%%%%%%%%%%%
%%%%%%%%%%%%%%%%%%%%%%%%%%%%%%%%%%
%%%%%%%%%%%%%%%%%%%%%%%%%%%%%%%%%%

Fix $d \geq 2$ and $\mu \geq 0$.  For the remainder of the article, let $\phi:\R^d \to [0,1]$ be a smooth, radial, decreasing bump function with $\phi \equiv 1$ on $\{|\xi| \leq 1\}$ and $\phi \equiv 0$ on $\{|\xi|\geq 2\}$.  

In this section, we prove the refined Strichartz inequality, which is crucial to our profile decomposition.  Before stating it, we need a little notation.  

\begin{definition} \label{D:caps}
A $\mu$-\textit{cap} is a ball $\kappa = \{\xi:|\xi-\xi_0| < r\}$, for some $\xi_0 \in \R^d$ and $r>0$ satisfying $8r \leq |\xi_0| + \sqrt{\mu}$.  
\end{definition}

Given a cap $\kappa$ with center $\xi_0$ and radius $r$, we define an associated cutoff
$$
\phi_\kappa(\xi) = \phi\bigl(\frac{\xi-\xi_0}r\bigr).
$$
Given $f$ we denote by $f_\kappa$ the function whose Fourier transform is given by
$$
\widehat{f_\kappa} = \phi_\kappa \widehat{f}.
$$

With this notation in place, our refined Strichartz inequality is the following.

\begin{proposition}[Refined Strichartz] \label{P:refined Strichartz}
Let $q = \frac{2(d^2+3d+1)}{d^2}$ and $\theta = \frac2{(d+2)^2}$.  If $f \in L^2_x(\R^d)$,  then
\begin{equation}\label{E:refined Strichartz}
\|S_\mu(t)D_\mu^{\frac{d}{d+2}}f\|_{L^{\frac{2(d+2)}d}_{t,x}} \lesssim \bigl(\sup_\kappa |\kappa|^{\frac{d+2}{dq}-\frac12}\|S_\mu(t)D_\mu^{\frac2q} f_\kappa\|_{L^q_{t,x}})^{\theta} \|f\|_{L^2_x}^{1-\theta}.
\end{equation}
Here the supremum is taken over all $\mu$-caps $\kappa$ as in Definition~\ref{D:caps}, and the implicit constant depends only on $d$.
\end{proposition}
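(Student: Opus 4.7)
My plan is to adapt the Moyua--Vargas--Vega / Tao--Vargas--Vega bilinear-to-linear machinery to the fourth order setting, using Tao's bilinear restriction theorem for elliptic hypersurfaces \cite{Tao:2003:paraboloid-restri} as the central input. The argument has three steps: (i) a single-scale bilinear Strichartz estimate on pairs of transverse $\mu$-caps; (ii) a Whitney decomposition of $\R^d \times \R^d$ off the diagonal and orthogonal summation over Whitney pairs; and (iii) H\"older in the cap index combined with interpolation against a cap-wise Bernstein/Strichartz bound at a larger Lebesgue exponent.

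For step (i), I first verify the ellipticity of the dispersion surface $\Sigma = \{(\xi, |\xi|^4+\mu|\xi|^2):\xi\in\R^d\}$ in a $\mu$-weighted sense. A direct computation gives the Hessian of the phase $\xi\mapsto|\xi|^4+\mu|\xi|^2$ as $(2\mu+4|\xi|^2)I + 8\,\xi\otimes\xi$, whose eigenvalues are all comparable to $D_\mu(\xi)^2 = \mu+|\xi|^2$. The $\mu$-cap condition $8r \leq |\xi_0|+\sqrt{\mu}$ ensures that $D_\mu$ is essentially constant over $\kappa$ and that $\Sigma$ over $\kappa$ is a bounded deformation of an elliptic paraboloid with Hessian of size $D_\mu(\xi_0)^2$. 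A suitable affine change of variables, rescaling $\xi$ by $r^{-1}$, $t$ by $(rD_\mu(\xi_0))^{-2}$, and composing with a linear $x$-map absorbing the anisotropy of the Hessian and the linear part of the phase at $\xi_0$, converts $S_\mu$ on $\kappa$ into a unit-scale Fourier extension operator on the standard paraboloid with constants uniform in $r,\xi_0,\mu$. Applying Tao's theorem to pairs $\kappa_1, \kappa_2$ of common scale $r$ whose centers are separated by $\gtrsim r$ yields
\begin{equation*}
\|S_\mu(t) f_{\kappa_1}\, S_\mu(t) f_{\kappa_2}\|_{L^{p_0}_{t,x}} \lesssim A(r,\xi_0,\mu)\,\|f_{\kappa_1}\|_{L^2_x}\|f_{\kappa_2}\|_{L^2_x}
\end{equation*}
for every $p_0 > (d+3)/(d+1)$, with an explicit $A$-factor absorbing the rescaling Jacobians.

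For step (ii), I Whitney-decompose $\R^d \times \R^d$ off the diagonal into pairs $(\kappa, \kappa')$ of $\mu$-caps of a common dyadic scale satisfying $\dist(\kappa, \kappa') \sim \diam(\kappa)$, and expand
\begin{equation*}
|S_\mu(t) D_\mu^{\frac{d}{d+2}}f|^2 = \sum_{(\kappa, \kappa')} S_\mu(t)(D_\mu^{\frac{d}{d+2}}f)_\kappa\,\overline{S_\mu(t)(D_\mu^{\frac{d}{d+2}}f)_{\kappa'}}.
\end{equation*}
The spacetime Fourier supports of the summands at distinct Whitney scales are essentially disjoint (they project to disjoint dyadic annuli along the difference direction), so an $L^{p_0}$ Plancherel/triangle argument combined with step (i) produces an intermediate estimate for $\|S_\mu D_\mu^{d/(d+2)} f\|_{L^{2p_0}_{t,x}}$ in terms of a cap-wise sum of products of $A$-factors and $\|f_\kappa\|_{L^2}$.

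Finally, step (iii) converts this intermediate bound into the form asserted in \eqref{E:refined Strichartz}. I pull a single cap $L^2$ norm out of the sum as a supremum, recast it using the Bernstein/Strichartz endpoint
\begin{equation*}
\|D_\mu^{2/q} S_\mu f_\kappa\|_{L^q_{t,x}} \lesssim |\kappa|^{\frac12-\frac{d+2}{dq}}\|f_\kappa\|_{L^2_x}
\end{equation*}
(a direct consequence of the unrefined Strichartz inequality plus Bernstein in $x$, using the essential constancy of $D_\mu$ on a $\mu$-cap), and collapse the remaining cap-wise $\ell^2$ sum into $\|f\|_{L^2}$ via Plancherel. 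Choosing $p_0$, $q$, and the H\"older exponents consistently forces the precise values $q=2(d^2+3d+1)/d^2$ and $\theta=2/(d+2)^2$. The main obstacle is step (i): when $\mu\neq 0$ the propagator $S_\mu$ has no exact dilation symmetry, the Hessian is not a scalar multiple of the identity (the rank-one $8\,\xi\otimes\xi$ correction makes the radial and tangential curvatures differ), and the rescaling must be tracked simultaneously in $r$, $\xi_0$, and $\mu$. The $\mu$-cap assumption $8r\leq|\xi_0|+\sqrt{\mu}$ is precisely the weakest hypothesis that makes this affine normalization uniform, allowing Tao's theorem to be invoked with constants independent of $r$, $\xi_0$, and $\mu$.
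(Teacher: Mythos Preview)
Your step (i) is correct and is essentially the paper's Corollary~\ref{C:small ball bilin} (their affine normalization is spelled out more explicitly, but the idea is the same). The gap is in step (ii): a Whitney decomposition of all of $\R^d\times\R^d$ off the diagonal does \emph{not} produce pairs of $\mu$-caps. The Whitney pair through $(\xi,\eta)$ has scale $r\sim|\xi-\eta|$, and that quantity bears no relation to $|\xi_0|+\sqrt\mu$. For $\mu=0$, take $\xi_0=(r/2,0,\dots,0)$ and $\xi_0'=-\xi_0$: this is a legitimate Whitney pair at scale $r$, but $8r>|\xi_0|$, so neither ball is a $0$-cap. Over such a cap the Hessian $4|\xi|^2 I + 8\,\xi\otimes\xi$ is far from constant---it vanishes at the origin---so your affine normalization in (i) does not land on a uniformly elliptic surface and Tao's theorem is unavailable with uniform constants. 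The same failure occurs for $\mu>0$ whenever the Whitney scale exceeds $\sim\sqrt\mu$ and the pair straddles a neighborhood of the origin. Your identification of the $\mu$-cap condition as ``precisely the weakest hypothesis that makes this affine normalization uniform'' is exactly right; the problem is that the global Whitney decomposition does not respect it.

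The paper repairs this with a step you have omitted: an annular Littlewood--Paley decoupling (Lemma~\ref{L:annular decoupling}) that reduces matters to a single dyadic shell $|\xi|\sim N$ before any Whitney-type argument is attempted. After rescaling the shell to $|\xi|\sim1$, every Whitney cap of diameter $<c_d$ is automatically a $\mu$-cap, and your steps (i)--(iii) then go through on the shell (this is Lemma~\ref{L:scale 1 refinement}, proved via bilinear restriction plus Whitney exactly as you outline). The annular step contributes an exponent $\tfrac2{d+2}$ and the within-shell Whitney step contributes $\tfrac1{d+2}$; their product is the stated $\theta=\tfrac2{(d+2)^2}$, so the two-stage structure is also what forces this particular value of $\theta$, which a single global Whitney pass would not obviously reproduce.
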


Propositions of this kind have appeared in many places in the literature, and the outline we follow is a familiar one (cf.\ \cite{Begout-Vargas:2007:profile-schrod-higher-d, Killip-Visan:2008:clay-lecture-notes}).  The main new ingredient here is the parameter $\mu$.  When $\mu=0$, the graph of the function $\mu|\xi|^2+|\xi|^4$ has vanishing curvature at 0, while in the case $\mu >0$, the curvature never vanishes, but the scaling symmetry is broken.  To deal with these issues, we begin by proving a refined Strichartz estimate associated to the decoupling of dyadic frequency annuli.  By this and scaling, it suffices to establish a refinement of the Stein--Tomas inequality for the Fourier extension operator associated to the surfaces $\{(|\xi|^2+|\xi|^4,\xi) : |\xi| \lesssim 1\}$ and $\{(\eps|\xi|^2 + |\xi|^4,\xi) : |\xi| \sim 1\}$, where $0 \leq \eps \lesssim 1$.  These surfaces are uniformly well-curved; indeed, they are elliptic on small (but uniform) frequency scales.  We can thus apply Tao's bilinear restriction theorem, and use methods developed in the context of the linear profile decomposition for Schr\"odinger to obtain refined estimates on these frequency localized regions.  Finally, it is a simple matter to undo the scaling and glue the pieces back together, thereby obtaining \eqref{E:refined Strichartz}.  

We begin by noting that it suffices to prove Proposition~\ref{P:refined Strichartz} when $\mu=0,1$.  Indeed, if $\mu>0$, a simple computation shows that
$$
S_\mu(t)f = [S_1(\mu^2 t) f(\tfrac\cdot{\sqrt\mu})]({\sqrt\mu}\:\cdot\:), \qtq{and} D_\mu f = [\sqrt\mu \, D_1 f(\tfrac\cdot{\sqrt\mu})]({\sqrt\mu}\:\cdot\:).
$$
Furthermore, if $\kappa = \{\xi : |\xi-\xi_0|<r\}$ is a 1-cap, $\sqrt\mu \, \kappa = \{\xi : |\xi - \sqrt\mu \,\xi_0| < \sqrt\mu\, r\}$ is a $\mu$-cap.  Thus Proposition~\ref{P:refined Strichartz} for any $\mu > 0$ follows from Proposition~\ref{P:refined Strichartz} in the case $\mu=1$ by scaling.  For the remainder of this section, we will consider only the cases $\mu=0,1$.  

We will employ two different Littlewood--Paley decompositions, depending on whether $\mu=0$ or $\mu = 1$.  Define 
\begin{gather*}
\psi_0^1(\xi) = \phi(2\xi), \qquad \psi_N^1(\xi) = \phi(\tfrac\xi N)-\phi(\tfrac{2\xi}N), \qtq{for} N \in 2^{\N},\\
\psi_N^0(\xi) = \phi(\tfrac\xi N) - \phi(\tfrac{2\xi}N), \qtq{for} N \in 2^\Z.
\end{gather*}
Regardless of the value of $\mu$, the $\psi_N^\mu$ form a partition of unity.  We define the Littlewood--Paley projections $P_N^\mu$ by $\widehat{f_N^\mu} = \widehat{P_N^\mu f} = \psi_N^\mu \widehat{f}$, $\mu=0,1$.

\begin{lemma} \label{L:annular decoupling}
If $f \in L^2_x$, then
\begin{equation} \label{E:annular decoupling}
\|S_\mu(t)D_\mu^{\frac{d}{d+2}}f\|_{L^{\frac{2(d+2)}d}_{t,x}} \lesssim \sup_N\|S_\mu(t)D_\mu^{\frac{d}{d+2}}f^\mu_N\|_{L^{\frac{2(d+2)}d}_{t,x}}^{\frac2{d+2}}\|f\|_{L^2_x}^{\frac{d}{d+2}},
\end{equation}
for $\mu=0,1$.  Here, the supremum is taken over frequencies $N \in \{0\} \cup 2^{\N}$ or $N \in 2^\Z$, depending on the value of $\mu$.  
\end{lemma}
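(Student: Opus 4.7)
The plan is to reduce Lemma~\ref{L:annular decoupling} to an $\ell^p$-almost-orthogonality estimate at the Stein--Tomas exponent $p=\frac{2(d+2)}{d}$, and then combine with an elementary pigeonhole argument and the (non-refined) Strichartz inequality \eqref{E:ST}. Throughout, set $F = S_\mu(t)D_\mu^{d/(d+2)}f$ and $F_N = S_\mu(t)D_\mu^{d/(d+2)}f^\mu_N$, so that $F=\sum_N F_N$ with the $F_N$'s having spatial Fourier supports in disjoint dyadic annuli $\{|\xi|\sim N\}$.

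The central (and technically hardest) step is to establish the $\ell^p$-orthogonality
\[
\|F\|_{L^p_{t,x}}^p \lesssim \sum_N \|F_N\|_{L^p_{t,x}}^p.
\]
The Littlewood--Paley square function equivalence $\|F\|_{L^p}\sim\|(\sum_N|F_N|^2)^{1/2}\|_{L^p}$, together with Minkowski, immediately yields $\|F\|_{L^p}^2\lesssim\sum_N\|F_N\|_{L^p}^2$, but this $\ell^2$-version only recovers the Strichartz inequality; the upgrade to $\ell^p$ at the Stein--Tomas exponent exploits the curvature of the characteristic surface through a bilinear $L^2$ argument. For the Schr\"odinger equation this is classical (see, e.g., \cite{Begout-Vargas:2007:profile-schrod-higher-d, Killip-Visan:2008:clay-lecture-notes}). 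In our fourth-order setting, after the preliminary reduction to $\mu\in\{0,1\}$ made just before the lemma, the surfaces $\{(\mu|\xi|^2+|\xi|^4,\xi):|\xi|\sim N\}$ become, upon rescaling each dyadic annulus, uniformly elliptic with curvatures bounded from above and below, so the Schr\"odinger proof carries over with uniform constants in $N$.

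Granted the $\ell^p$-orthogonality, the rest is routine: H\"older in $N$ yields
\[
\sum_N \|F_N\|_{L^p}^p \leq \bigl(\sup_N \|F_N\|_{L^p}\bigr)^{p-2}\sum_N \|F_N\|_{L^p}^2,
\]
and Strichartz \eqref{E:ST} applied to each piece gives $\|F_N\|_{L^p}\lesssim\|f^\mu_N\|_{L^2}$, so Plancherel produces $\sum_N\|F_N\|_{L^p}^2 \lesssim \sum_N\|f^\mu_N\|_{L^2}^2\sim\|f\|_{L^2}^2$. Combining these, $\|F\|_{L^p}^p\lesssim(\sup_N\|F_N\|_{L^p})^{p-2}\|f\|_{L^2}^2$, and taking $p$-th roots together with the identities $(p-2)/p = 2/(d+2)$ and $2/p = d/(d+2)$ delivers \eqref{E:annular decoupling}. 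The main obstacle is thus the $\ell^p$-orthogonality; the only genuinely new point for us is the uniformity in $N$ needed when $\mu=1$, which I would handle by separating into the compact low-frequency region $N\lesssim 1$ and the high-frequency regime $N\gtrsim 1$ (where $|\xi|^2+|\xi|^4$ is comparable to $|\xi|^4$) and applying the Schr\"odinger-type proof in each.
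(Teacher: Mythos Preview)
Your plan has a genuine gap at its central step. The claimed $\ell^p$-orthogonality
\[
\|F\|_{L^p_{t,x}}^p \lesssim \sum_N \|F_N\|_{L^p_{t,x}}^p, \qquad p=\tfrac{2(d+2)}{d},
\]
is \emph{not} what is proved in \cite{Begout-Vargas:2007:profile-schrod-higher-d} or \cite{Killip-Visan:2008:clay-lecture-notes}, and the bilinear $L^2$ argument you allude to does not yield it. Bilinear Strichartz for separated annuli gives decay of $\|F_M F_N\|_{L^{p/2}}$ in terms of $\|f_M\|_{L^2}\|f_N\|_{L^2}$, not in terms of $\|F_M\|_{L^p}\|F_N\|_{L^p}$; since one may have $\|F_N\|_{L^p}\ll\|f_N\|_{L^2}$, there is no way to convert such a bound into control by $\sum_N\|F_N\|_{L^p}^p$. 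Note also that the square-function lower bound already gives $\|F\|_{L^p}^p\gtrsim\sum_N\|F_N\|_{L^p}^p$ for $p>2$, so your inequality would force a two-sided equivalence, a much stronger statement than the lemma. Finally, your invocation of uniform ellipticity of the rescaled surfaces is beside the point here: that geometry governs the cap-level refinement within a fixed annulus (Lemma~\ref{L:scale 1 refinement}), not the interaction between different dyadic scales.

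The paper bypasses the $\ell^p$-orthogonality entirely. After the square-function step one expands into bilinear pieces $\|F_MF_N\|_{L^{p/2}}$ and applies H\"older \emph{asymmetrically}, placing one factor of each $F_M,F_N$ in $L^p_{t,x}$ and the remaining factors in \emph{other} admissible Strichartz spaces (in $d=2$: $L^3_tL^6_x$ and $L^6_tL^3_x$; in $d\ge3$: the endpoint pair and an intermediate pair). The linear Strichartz estimate \eqref{E:Strichartz} on those auxiliary norms produces, via the mismatch in $D_\mu$-weights, a Schur kernel $(N/M)^c$ (or $(\jp{N}/\jp{M})^c$ when $\mu=1$) multiplying $\|f_M\|_{L^2}\|f_N\|_{L^2}$. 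One then pulls out $\sup_K\|F_K\|_{L^p}^2$ from the two $L^p$ factors and sums the rest by Schur's test to obtain $\|f\|_{L^2}^2$. This uses only the linear Strichartz inequalities at several exponent pairs---no bilinear restriction and no ellipticity input---and the $\mu=1$ case falls out simply by replacing $M,N$ with $\jp{M},\jp{N}$ in the kernel.
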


In the proof of Lemma~\ref{L:annular decoupling}, we will use the following Strichartz estimates, which may be proved using the methods of stationary phase together with the main theorem of Keel and Tao \cite{KeelTao}.  See Pausader \cite{Pausader:2007} for further details.  

\begin{proposition}
Let $\mu \geq 0$.  For all $(q,r)$ satisfying $2 \leq q,r \leq \infty$, $(q,r) \neq (2,\infty)$, and $\frac2q + \frac dr = \frac d2$, 
\begin{equation} \label{E:Strichartz}
\| D_{\mu} ^{\frac2q}S_\mu(t) f\|_{L^q_tL^r_x(\R \times \R^d)} \lesssim \|f\|_{L^2_x(\R^d)}.
\end{equation}
The implicit constant may be taken to depend only on $d,q,r$ and, in particular, may be chosen independently of $\mu$.  
\end{proposition}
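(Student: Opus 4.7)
My plan is to follow the standard Keel--Tao abstract Strichartz framework \cite{KeelTao}: I would localize the propagator in frequency via the Littlewood--Paley projections $P_N^\mu$ introduced above, prove a frequency-localized dispersive estimate by stationary phase, feed this into the abstract theorem, and finally recombine the pieces. The key dispersive ingredient is the pointwise kernel bound
\begin{equation*}
|K_N(t,x)| \lesssim |t|^{-d/2}(N^2+\mu)^{-d/2}, \qquad K_N(t,x) = \int e^{i(x\cdot\xi + t(|\xi|^4 + \mu|\xi|^2))}\psi_N^\mu(\xi)^2\, d\xi.
\end{equation*}
A direct computation gives $\nabla^2\Phi(\xi) = (4|\xi|^2+2\mu)I + 8\xi\xi^T$ for $\Phi(\xi) = |\xi|^4 + \mu|\xi|^2$, whose eigenvalues lie between $4|\xi|^2+2\mu$ and $12|\xi|^2+2\mu$. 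On the shell $|\xi|\sim N$ this gives $|\det\nabla^2\Phi| \sim (N^2+\mu)^d$ with constants depending only on $d$, and after rescaling $\xi = N\eta$ to normalize the support (when $N > 0$) a standard stationary phase argument produces the stated bound.

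Combined with the trivial $L^2$ energy identity $\|P_N^\mu S_\mu(t) f\|_{L^2_x} = \|P_N^\mu f\|_{L^2_x}$, the Keel--Tao theorem with decay exponent $\sigma = d/2$, after absorbing the $(N^2+\mu)^{-d/2}$ factor via a rescaling of time, then yields the frequency-localized Strichartz estimate
\begin{equation*}
\|P_N^\mu S_\mu(t) f\|_{L^q_t L^r_x} \lesssim (N^2+\mu)^{-1/q}\|f\|_{L^2_x}
\end{equation*}
for every admissible pair $(q,r) \neq (2,\infty)$, with constants depending only on $d,q,r$. Since $D_\mu^{2/q}$ acts like multiplication by $(N^2+\mu)^{1/q}$ on the frequency support of $P_N^\mu$, this is equivalent to $\|D_\mu^{2/q} P_N^\mu S_\mu(t) f\|_{L^q_t L^r_x} \lesssim \|P_N^\mu f\|_{L^2_x}$. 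Using $q, r \geq 2$, a Littlewood--Paley square function estimate together with two applications of Minkowski's inequality reduces the full Strichartz bound to the square sum of these localized bounds, which is controlled by $\|f\|_{L^2_x}$ via Plancherel.

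The main obstacle — and the reason for inserting the $D_\mu^{2/q}$ weight at all — is the loss of scale-invariance when $\mu > 0$: the symbol $|\xi|^4 + \mu|\xi|^2$ behaves like a fourth-order operator at high frequencies and like a second-order Schr\"odinger operator at low frequencies, so the dispersive rate is genuinely frequency-dependent. The factor $(N^2+\mu)^{1/q}$ in $D_\mu^{2/q}$ interpolates exactly between these regimes on each dyadic piece, allowing the constants to be chosen uniformly in $\mu$; verifying this uniformity in the degenerating limits (either $\mu\to 0$ with $N$ small, or $N\to\infty$ for fixed $\mu$) is precisely what the explicit Hessian bound above provides. Finally, the endpoint Strichartz pair (when $d \geq 3$) is the reason we must invoke the full Keel--Tao machinery rather than a direct TT$^*$/Hardy--Littlewood--Sobolev argument.
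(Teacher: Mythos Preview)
Your proposal is correct and matches the paper's own treatment: the paper does not give a detailed proof of this proposition but simply states that it ``may be proved using the methods of stationary phase together with the main theorem of \cite{KeelTao}'' and refers to \cite{Pausader:2007} for details, which is exactly the strategy you carry out. Your Hessian computation and the resulting frequency-localized dispersive bound $|K_N(t,x)| \lesssim |t|^{-d/2}(N^2+\mu)^{-d/2}$ are correct, the time-rescaling to extract the factor $(N^2+\mu)^{-1/q}$ from Keel--Tao is the right bookkeeping, and the Littlewood--Paley recombination via Minkowski (using $q,r\geq 2$ and the fact that the admissibility conditions preclude $r=\infty$) is standard and valid.
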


\begin{proof}
Let $d = 2$.  By the Littlewood--Payley square function estimate, simple arithmetic, H\"older's inequality, and the Strichartz inequality, for $\mu=0,1$,
\begin{align*} 
&\|S_\mu(t)D_\mu^{\frac12}f\|_{L^4_{t,x}}^4
 \sim \iint (\sum_M |S_\mu(t)D_\mu^{\frac12}f_M^\mu|^2)(\sum_N |S_\mu(t)D_\mu^{\frac12}f_N^\mu|^2)\, dx\, dt\\
&\qquad \sim \sum_{M \geq N} \|(S_\mu(t)D_\mu^{\frac12}f_M^\mu)(S_\mu(t)D_\mu^{\frac12}f_N^\mu)\|_{L^2_{t,x}}^2\\
& \qquad\lesssim \sum_{M \geq N} \|S_\mu(t)D_\mu^{\frac12}f_M^\mu\|_{L^4_{t,x}}\|S_\mu(t)D_\mu^{\frac12}f_M^\mu\|_{L^3_tL^6_x}
\|S_\mu(t)D_\mu^{\frac12}f_N^\mu\|_{L^4_{t,x}}\|S_\mu(t)D_\mu^{\frac12}f_N^\mu\|_{L^6_tL^3_x}\\ 
&\qquad\lesssim \sup_K\|S_\mu(t)D_\mu^{\frac12} f_K^\mu\|_{L^4_{t,x}}^2 \sum_{M \geq N}\|D_\mu^{-\frac16}f_M^\mu\|_{L^2_x}\|D_\mu^{\frac16}f_N^\mu\|_{L^2_x}.  
\end{align*}
If $\mu=0$, this implies by Plancherel that
$$
\|S_\mu(t)D_\mu^{\frac12}f\|_{L^4_{t,x}}^4 \lesssim \sup_K\|S_\mu(t)D_\mu^{\frac12}f_K^0\|_{L^4_{t,x}}^2 \sum_{M \geq N}M^{-\frac16}N^{\frac16}\|f_M^0\|_{L^2_x}\|f_N^0\|_{L^2_x},
$$
while if $\mu=1$, 
$$
\|S_\mu(t)D_\mu^{\frac12}f^1\|_{L^4_{t,x}}^4 \lesssim \sup_K\|S_\mu(t)D_\mu^{\frac12}f_K^1\|_{L^4_{t,x}}^2 \sum_{M \geq N}\jp{M}^{-\frac16}\jp{N}^{\frac16}\|f_M^1\|_{L^2_x}\|f_N^1\|_{L^2_x}.
$$
In either case, \eqref{E:annular decoupling} follows from an application of Schur's test.

If $d>2$, then by the square function estimate, the triangle inequality, H\"older's inequality, and the endpoint Strichartz inequality, for $\mu=0,1$,
\begin{align*}
&\|S_\mu(t)D_\mu^{\frac{d}{d+2}} f\|_{L^{\frac{2(d+2)}d}_{t,x}}^{\frac{2(d+2)}d}
\sim \iint \bigl(\sum_M|S_\mu(t)D_\mu^{\frac{d}{d+2}}f_M^\mu|^2\bigr)^{\frac{d+2}{2d}}\bigl(\sum_N|S_\mu(t)D_\mu^{\frac{d}{d+2}}f_N^\mu|^2\bigr)^{\frac{d+2}{2d}}\, dx\, dt\\
&\qquad \lesssim \sum_{M \geq N} \|\bigl(S_\mu(t)D_\mu^{\frac{d}{d+2}}f_M^\mu\bigr)\bigl(S_\mu(t)D_\mu^{\frac{d}{d+2}}f_N^\mu\bigr)\|_{L^{\frac{d+2}{d}}_{t,x}}^{\frac{d+2}{d}}\\
& \qquad \lesssim \sum_{M \geq N} \|S_\mu(t)D_\mu^{\frac{d}{d+2}}f_M^\mu\|_{L^{\frac{2(d+2)}{d}}_{t,x}}\|S_\mu(t)D_\mu^{\frac{d}{d+2}}f_M^\mu\|_{L^2_tL^{\frac{2d}{d-2}}_x}^{\frac2d}\\
&\qquad \qquad \qquad \times \|S_\mu(t)D_\mu^{\frac{d}{d+2}}f_N^\mu\|_{L^{\frac{2(d+2)}{d}}_{t,x}}\|S_\mu(t)D_\mu^{\frac{d}{d+2}}f_N^\mu\|_{L^{\frac{2(d+2)}{d-2}}_tL^{\frac{2d(d+2)}{d^2+4}}_x}^{\frac2d}\\
&\qquad \lesssim \sup_K \|S_\mu(t)D_\mu^{\frac{d}{d+2}}f_K^\mu\|_{L^{\frac{2(d+2)}{d}}_{t,x}}^{\frac4d}
\sum_{M \geq N} \|f_M^\mu\|_{L^2_x}^{1-\frac2d}\|D_\mu^{-\frac{2}{d+2}}f_M^\mu\|_{L^2_x}^{\frac2d}\|f_N^\mu\|_{L^2_x}^{1-\frac2d} \|D_\mu^{\frac2{d+2}}f_N^\mu\|_{L^2_x}^{\frac2d}.
\end{align*}
By Plancherel, if $\mu=0$, 
$$
\|S_\mu(t)D_\mu^{\frac{d}{d+2}} f\|_{L^{\frac{2(d+2)}d}_{t,x}}^{\frac{2(d+2)}d}\lesssim \sup_K \|S_\mu(t)D_\mu^{\frac{d}{d+2}}f_K^0\|_{L^{\frac{2(d+2)}{d}}_{t,x}}^{\frac4d}\sum_{M \geq N} M^{-\frac4{d(d+2)}}N^{\frac4{d(d+2)}}\|f_M^0\|_{L^2_x}\|f_N^0\|_{L^2_x},
$$
while if $\mu=1$,
$$
\|S_\mu(t)D_\mu^{\frac{d}{d+2}} f\|_{L^{\frac{2(d+2)}d}_{t,x}}^{\frac{2(d+2)}d}\lesssim \sup_K \|S_\mu(t)D_\mu^{\frac{d}{d+2}}f_K^1\|_{L^{\frac{2(d+2)}{d}}_{t,x}}^{\frac4d}\sum_{M \geq N} \jp{M}^{-\frac4{d(d+2)}}\jp{N}^{\frac4{d(d+2)}}\|f_M^1\|_{L^2_x}\|f_N^1\|_{L^2_x}.
$$
As in the $d=2$ case, \eqref{E:refined Strichartz} follows by Schur's test. 
\end{proof}

We now rescale one more time, to frequencies $|\xi| \sim 1$.  We claim that the proposition follows from the following.

\begin{lemma} \label{L:scale 1 refinement}
Let $q = \tfrac{2(d^2+3d+1)}{d^2}$.  For $f \in L^2_x(\R^d)$, at scale 1 we have
\begin{equation} \label{E:scale 1}
\|S_\eps(t) P_1^0 f\|_{L^{\frac{2(d+2)}d}_{t,x}} \lesssim \bigl(\sup_\kappa|\kappa|^{\frac{d+2}{dq}-\frac12}\|S_\eps(t)f_\kappa\|_{L^q_{t,x}}\bigr)^{\frac1{d+2}}\|f\|_{L^2_x}^{\frac{d+1}{d+2}}, \quad 0 \leq \eps \leq 1,
\end{equation}
where the supremum is taken over caps $\kappa = \{\xi:|\xi-\xi_0| < r\}$, with $\tfrac12 \leq |\xi_0| \leq 2$ and $r < \tfrac1{16}$.  Additionally, at scale 0,
\begin{equation} \label{E:scale 0}
\|S_1(t) P_0^1 f\|_{L^{\frac{2(d+2)}d}_{t,x}} \lesssim \bigl(\sup_\kappa |\kappa|^{\frac{d+2}{dq}-\frac12}\|S_1(t) f_\kappa\|_{L^q_{t,x}}\bigr)^{\frac1{d+2}} \|f\|_{L^2_x}^{\frac{d+1}{d+2}},
\end{equation}
where the supremum is taken over caps $\kappa = \{\xi : |\xi-\xi_0| < r\}$, with $|\xi_0|\leq 1$ and $r < \tfrac1{16}$.  
\end{lemma}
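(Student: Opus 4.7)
The plan is to deduce both displayed inequalities from Tao's bilinear restriction theorem for elliptic surfaces, applied to the appropriate frequency-localized pieces. In \eqref{E:scale 1}, the cutoff $P_1^0$ restricts $\widehat f$ to the annulus $\{|\xi|\sim 1\}$, on which the phase $\Phi_\eps(\xi)=\eps|\xi|^2+|\xi|^4$ has Hessian comparable to the identity \emph{uniformly in} $\eps\in[0,1]$ (since the contribution of $|\xi|^4$ already provides lower bounds on the principal curvatures independent of $\eps$). In \eqref{E:scale 0}, $P_0^1$ restricts to $\{|\xi|\lesssim 1\}$, where the phase $\Phi_1(\xi)=|\xi|^2+|\xi|^4$ is elliptic thanks to the quadratic term, with Hessian uniformly non-degenerate near the origin. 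In both situations the graph of the phase over any cap $\kappa$ of the sort appearing in the supremum is a uniformly elliptic piece of hypersurface, and Tao's theorem \cite{Tao:2003:paraboloid-restri} applies to $S_\eps f_\kappa$ with constants independent of the center of $\kappa$ (and of $\eps\in[0,1]$).

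I would next run the standard Whitney-style bilinear-to-linear upgrade (as in \cite{Begout-Vargas:2007:profile-schrod-higher-d, Killip-Visan:2008:clay-lecture-notes}). Decompose the support of the Littlewood--Paley piece into dyadic caps and pair them so that each pair $(\kappa,\kappa')$ consists of caps of comparable size $r$ separated by distance $\sim r$. Tao's bilinear estimate then yields
\[
\|(S_\eps f_\kappa)(S_\eps f_{\kappa'})\|_{L^p_{t,x}}\lesssim r^{\beta(p)}\|f_\kappa\|_{L^2_x}\|f_{\kappa'}\|_{L^2_x}
\]
for some $p<\tfrac{d+2}{d}$ with an explicit scaling exponent $\beta(p)$. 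Because the spacetime Fourier supports of the products $(S_\eps f_\kappa)(S_\eps f_{\kappa'})$ for distinct Whitney pairs at the same scale have bounded overlap (Córdoba--Fefferman-type orthogonality on the elliptic surface), squaring the identity $S_\eps P_1^0 f=\sum_\kappa S_\eps f_\kappa$, reorganizing into Whitney pairs, and applying the bilinear estimate gives an $\ell^p_\kappa$-type bound on the $L^{\frac{2(d+2)}{d}}_{t,x}$ norm.

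The last step is to extract a single cap via Hölder. Writing $\|S_\eps f_\kappa\|_{L^q_{t,x}}$ in terms of $\|f_\kappa\|_{L^2_x}$ via the Bernstein-interpolated form of the $L^p$ bilinear output above, we bound the $\ell^p$-sum by $(\sup_\kappa |\kappa|^{\frac{d+2}{dq}-\frac12}\|S_\eps f_\kappa\|_{L^q_{t,x}})^{\theta}$ times $(\sum_\kappa\|f_\kappa\|_{L^2_x}^2)^{(1-\theta)/2}$, with $\theta=\tfrac{1}{d+2}$ and $q=\tfrac{2(d^2+3d+1)}{d^2}$ emerging as the unique pairing of exponents that is consistent with $L^2$-scaling and with the range of $p$ afforded by Tao's theorem. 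The second factor is controlled by $\|f\|_{L^2_x}^{(d+1)/(d+2)}$ using frequency orthogonality of the $f_\kappa$. Summing over dyadic cap scales finishes the argument, exactly as in the Schrödinger case.

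The main obstacle will be maintaining uniformity of all constants in $\eps$ for \eqref{E:scale 1}: as $\eps\to 0$, the phase $\Phi_\eps$ degenerates to $|\xi|^4$, but since we have already frequency-localized to $|\xi|\sim 1$ the degeneracy is harmless. One must nevertheless check that the constants in Tao's theorem, in the Córdoba--Fefferman orthogonality lemma on the elliptic surface, and in the Bernstein-type estimates are all bounded by a function of the $C^2$ geometry of $\Phi_\eps$ on the unit annulus, which is uniform in $\eps\in[0,1]$. The bookkeeping required to identify the precise exponent $q$ (and the weight $|\kappa|^{\frac{d+2}{dq}-\frac12}$) is the other place where care is needed, but this is determined by the requirement that the bilinear output be converted into a linear estimate with $L^2$-critical scaling.
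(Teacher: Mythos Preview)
Your proposal is correct and follows essentially the same route as the paper: exploit the uniform ellipticity of the phase on the localized frequency region, invoke Tao's bilinear restriction theorem, and run the Whitney-decomposition bilinear-to-linear machinery as in \cite{Begout-Vargas:2007:profile-schrod-higher-d, Killip-Visan:2008:clay-lecture-notes}. The paper is more explicit than you are about the affine normalization needed to place each small piece of $\{(\eps|\xi|^2+|\xi|^4,\xi):|\xi-\xi_0|\lesssim c_d\}$ into the form $\{(\tfrac12|\xi|^2+g(\xi),\xi)\}$ with $\|g\|_{C^N}$ small (not merely $C^2$) as Tao's theorem actually requires---this is the content of the passage leading to Corollary~\ref{C:small ball bilin}---but otherwise the arguments match.
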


Assuming the lemma for the moment, we complete the proof of Proposition~\ref{P:refined Strichartz}.  

\begin{proof}[Proof of Proposition~\ref{P:refined Strichartz}]
By scaling, \eqref{E:scale 1} implies that 
\begin{equation} \label{E:scale N}
N^{\frac d{d+2}}\|S_{N\eps}(t)P_N^0 f\|_{L^{\frac{2(d+2)}d}_{t,x}} \lesssim \bigl(\sup_\kappa|\kappa|^{\frac{d+2}{dq}-\frac12}N^{\frac2q}\|S_{N\eps}(t) f_\kappa\|_{L^q_{t,x}}\bigr)^{\frac1{d+2}}\|f\|_{L^2_x}^{\frac{d+1}{d+2}},
\end{equation}
for $0 \leq \eps \lesssim 1$ and $N \in 2^\Z$, where the supremum is taken over caps $\kappa = \{|\xi-\xi_0|<r\}$, where $\tfrac N4 \leq |\xi_0| \leq 4N$ and $r < \tfrac N{32}$.  We note that if $\mu=0$ or $\mu=1$ and $N \geq 1$, these are $\mu$-caps.  

In the case $\mu=0$, the refined Strichartz estimate \eqref{E:refined Strichartz} just follows from \eqref{E:annular decoupling} followed by \eqref{E:scale N} (with $\eps=0$) and Bernstein's inequality.  

In the case $\mu=1$, we first apply \eqref{E:annular decoupling}.  Then we use Bernstein's inequality together with \eqref{E:scale 0} for the scale 0 term and \eqref{E:scale N} with $\eps = \tfrac1N$ for the scale $N$ term.  (Note that if $N \in 2^N$, $P_N^0 = P_N^1$.)
\end{proof}

The proof of Lemma~\ref{L:scale 1 refinement} reduces to Lemma \ref{L:scale 1 refined Strichartz} . The reduction is similar as in Guth \cite[p.~386]{Guth:2016:restriction-polynomial-partition}. We consider a function with Fourier support in a small cap in $\{|\xi|\sim 1\}$. 

Fix $\tfrac14 < |\xi_0| < 4$ and define
$$
\sigma_{\eps,\xi_0} = \{(\eps|\xi|^2 + |\xi|^4,\xi) : |\xi-\xi_0| \leq 3 c_d\}.
$$
Suppose that the Fourier transform $\hat{f}$ is supported on $\sigma_{\eps,\xi_0}$. We consider 4th order Schr\"odinger operator. 
Let $\phi(\xi)= |\xi|^4+\eps|\xi|^2$.  
$$ S_{\eps,\xi_0}(t)f(x)=\int_{\sigma_{\eps,\xi_0}} e^{ix\cdot\xi+it(|\xi|^4+\eps|\xi|^2)} \hat{f}(\xi) d\xi=:e^{it\phi(|\nabla|)}f(x),$$
where $x\cdot \xi$ or $x\xi$ denotes the inner product of $x$ and $\xi$. Then by translation invariance, 
$$ S_{\eps,\xi_0}(t)f(x)=e^{ix\cdot\xi_0+it\phi(\xi_0)}\int_{\sigma_{\eps,\xi_0}} e^{i(x-t\nabla \phi(\xi_0))\cdot(\xi-\xi_0)+it\left(\phi(\xi)-\phi(\xi_0)-\nabla \phi(\xi_0)\cdot(\xi-\xi_0)\right)} \hat{f}(\xi) d\xi.$$
Let $h(\xi) =\phi(\xi)-\phi(\xi_0)-\nabla \phi(\xi_0)\cdot (\xi-\xi_0) $. Then 
\begin{equation}\label{galilean_1}\|S_{\eps,\xi_0}(t)f(x)\|_{L_{t,x}^{\frac {2(d+2)}{d}}} = \|e^{ith(|\nabla|)}f\|_{L_{t,x}^{\frac {2(d+2)}{d}}}
\end{equation}
The function $h$ satisfies that $h(\xi_0)=0$ and $\nabla h(\xi_0) =0$. Recall that the elliptic condition in Guth \cite[Condition 2.1]{Guth:2016:restriction-polynomial-partition}, see also Tao,~Vargas and ~Vega \cite{Tao-Vargas-Vega:1998:bilinear-restri-kakeya}. Suppose that $S\subset \mathbb{R}^3$ is a smooth compact surface given as the graph of a function $h:\, B(0,1) \to \mathbb{R}$ which satisfies the following conditions for some large $L$: 

\begin{itemize}
\item $0<\frac 12 \le \nabla^2 h \le 2$, namely, all the eigenvalues of $\nabla^2 h$ is comparable to $1.$
\item $0= h(0)$, $ \nabla h(0)=0$.
\item $h$ is $C^L$, and 
\item for $3\le l\le L $, $\|\nabla^l h\|_{C^0}\le 10^{-9}. $
\end{itemize}

The function $h$ is not in elliptic type. However since the Strichartz inequality is scaling-invariant, $h$ can be normalized to satisfy the elliptic condition above by Taylor's expansion, change of variables, and parabolic scalings, see for instance Guth \cite[p. 386]{Guth:2016:restriction-polynomial-partition}.  We remark that it may necessarily change the value of $c_d$. Then the remarks in Tao \cite[Section 9]{Tao:2003:paraboloid-restri} applies.

\begin{theorem}[\cite{Tao:2003:paraboloid-restri}] \label{T:tao bilin} Let $h$ be as above. Then the operator $\mathcal{E}_h$ defined by 
$$
\mathcal{E}_hf (t,x) = \int_{\{|\xi| \lesssim 1\}} e^{i(t,x) \cdot (h(\xi),\xi)}\hat f(\xi)\, d\xi
$$
satisfies the following.  Let $\rho_1,\rho_2 \subset \{|\xi| \lesssim 1\}$ satisfy
$$
\diam(\rho_1) \sim \diam(\rho_2) \sim \dist(\rho_1,\rho_2) \sim 1.
$$
If $f_1,f_2$ are $L^2_x$ functions whose Fourier transforms are supported on $\rho_1,\rho_2$, respectively, then
$$
\bigl\|\bigl(\mathcal{E}_h f_1)\bigl(\mathcal{E}_h f_2)\bigr\|_{L^p_{t,x}} \lesssim \|f_1\|_{L^2_x}\|f_2\|_{L^2_x}, \qquad p > \tfrac{d+3}{d+1},
$$
where the implicit constant depends only on $d,p$.
\end{theorem}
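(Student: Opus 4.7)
The plan is to adopt Tao's proof of the bilinear restriction theorem for the paraboloid \cite{Tao:2003:paraboloid-restri} with essentially no modification, viewing $\Phi(\xi) = \tfrac12|\xi|^2 + g(\xi)$ as a $C^N$-small perturbation of the standard paraboloid. Tao himself signals in his Section~9 that his argument yields this generalization, so the real task is to verify that each quantitative input to his proof is stable under the hypothesis $\|g\|_{C^N(\{|\xi|\lesssim 1\})} < \delta_d$, for a suitable integer $N = N_d$ that we are free to specify.

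First I would set up the wave packet decomposition of $\mathcal{E}_\Phi f_i$ at a large scale $R$: each $f_i$ is expanded into an $\ell^2$-orthogonal sum of pieces adapted to $R^{-1/2}$-cubes in $\rho_i$, whose extensions are essentially supported on tubes of dimensions $R^{1/2}\times\cdots\times R^{1/2}\times R$ in the direction normal to the graph $(\Phi(\xi),\xi)$. The construction and its basic $L^2$ almost-orthogonality estimates depend only on finitely many derivative bounds for $\Phi$ and on uniform ellipticity of the Hessian, both of which follow from $\|g\|_{C^N} < \delta_d$ together with $D^2\Phi = \Id + D^2 g$.

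Next I would recall the base bilinear $L^2_{t,x}$ estimate
\[
\|(\mathcal{E}_\Phi f_1)(\mathcal{E}_\Phi f_2)\|_{L^2_{t,x}} \lesssim \|f_1\|_{L^2_x}\|f_2\|_{L^2_x},
\]
which opens Tao's induction on scales. This follows from Plancherel applied to the product on the Fourier side together with the lower bound
\[
|\nabla\Phi(\xi_1) - \nabla\Phi(\xi_2)| = |(\xi_1-\xi_2) + (\nabla g(\xi_1)-\nabla g(\xi_2))| \gtrsim |\xi_1-\xi_2|
\]
for $\xi_i \in \rho_i$, valid since $\|D^2 g\|_{C^0} < \delta_d \ll 1$. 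This bound supplies the required Jacobian control for the parametrization $(\xi_1,\xi_2)\mapsto(\Phi(\xi_1)+\Phi(\xi_2),\xi_1+\xi_2)$ on transverse pairs.

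With these ingredients in hand I would run Tao's Whitney decomposition and induction-on-scales argument on $(\mathcal{E}_\Phi f_1)(\mathcal{E}_\Phi f_2)$: pigeonhole into pairs of separated $R^{-1/2}$-caps, exploit transversality of the normals (again a consequence of $\|D^2 g\|$ being small, since the normals are $O(\delta_d)$-perturbations of the paraboloid normals), and close the induction combinatorially to upgrade the trivial $p=2$ bound to any $p > \tfrac{d+3}{d+1}$. The main obstacle is bookkeeping rather than any new idea: one must verify that every constant and every $\varepsilon$-parameter in Tao's proof depends only on $d$ and on upper bounds for finitely many derivatives of $\Phi$, so that a single pair $(N_d,\delta_d)$ works uniformly across all admissible $g$. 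Since Tao's proof draws on $\Phi$ only through $C^N$ control and uniform ellipticity of its Hessian, both of which are built into our hypotheses, this verification reduces to tracing dependencies through his argument.
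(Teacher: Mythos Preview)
Your proposal is correct and matches the paper's treatment: the paper does not give an independent proof of this theorem but simply invokes it as a consequence of Section~9 of \cite{Tao:2003:paraboloid-restri}, exactly as you do. Your sketch of why Tao's argument carries over (wave packet decomposition, bilinear $L^2$ base case via transversality of normals, induction on scales) is accurate and supplies more detail than the paper itself, which treats the result as a black box.
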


Taking advantage of the symmetries of the Fourier transform, we have the following corollary. 

\begin{corollary} \label{C:small ball bilin}
For $c_d$ sufficiently small, if $\kappa_1$ and $\kappa_2$ are two caps contained in $\{\frac14 < |\xi| < \frac32\}$ satisfying
$$
\diam(\kappa_1) \sim \diam(\kappa_2) \sim \dist(\kappa_1,\kappa_2) \sim r < c_d,
$$
then for any $f \in L^2_x$ and $p > \frac{d+3}{d+1}$,
$$
\|\bigl(\mathcal{E}_hf_{\kappa_1}\bigr)\bigl(\mathcal{E}_h f_{\kappa_2}\bigr)\|_{L^p_{t,x}} \lesssim_{d,p} r^{d-\frac{d+2}p}\|f_{\kappa_1}\|_{L^2_x}\|f_{\kappa_2}\|_{L^2_x}, \qquad p > \frac{d+3}{d+1}.
$$
\end{corollary}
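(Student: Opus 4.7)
The plan is to reduce the claim to Theorem~\ref{T:tao bilin} by combining the affine normalization $A_{\eps,\xi_0}$ already constructed in this section with a parabolic rescaling that inflates the caps to unit size on a perturbed paraboloid. Concretely, I choose $\xi_0$ inside $\kappa_1$ (so that $\xi_0$ lies in the admissible range $\{\tfrac14 < |\xi_0| < 4\}$), and, taking $c_d$ small enough relative to the implicit constants in the hypothesis $\diam\kappa_i \sim \dist(\kappa_1,\kappa_2) \sim r < c_d$, I have $\kappa_1 \cup \kappa_2 \subset \{|\xi-\xi_0| < 3c_d\}$. Pulling the Fourier variables back by $A_{\eps,\xi_0} = B_{a_d}D_{\eps,\xi_0}R_{\eps,\xi_0}T_{\eps,\xi_0}$ converts $S_\eps(t) f_{\kappa_i}$ into $\mathcal{E}_{\Phi_{\eps,\xi_0}} F_i$, up to a unimodular phase (from the frequency translation $T_{\eps,\xi_0}$) and a linear change of $(t,x)$-variables (dual to the rotation and linear dilations) whose Jacobian is bounded above and below uniformly in $\eps$. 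Under this correspondence the caps $\kappa_i$ become sets $\rho_i \subset \{|\eta| \leq 1\}$ with $\diam(\rho_1) \sim \diam(\rho_2) \sim \dist(\rho_1,\rho_2) \sim r$ and $\hat F_i$ supported in $\rho_i$.

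Next, I apply the parabolic rescaling $\eta = r\tilde\eta$, $y = rx$, $s = r^2 t$, which sends $\rho_i$ to sets $\tilde\rho_i \subset \{|\tilde\eta| \lesssim 1\}$ of diameter and separation $\sim 1$, and the phase to
\[
\tilde\Phi(\tilde\eta) = \tfrac12|\tilde\eta|^2 + r^{-2} g_{\eps,\xi_0}(r\tilde\eta).
\]
The key point is that $A_{\eps,\xi_0}$ was engineered so that $g_{\eps,\xi_0}(0) = 0$, $\nabla g_{\eps,\xi_0}(0) = 0$, and $D^2 g_{\eps,\xi_0}(0) = 0$; Taylor expansion, together with the earlier bound $\|g_{\eps,\xi_0}\|_{C^N(\{|\eta|<1\})} < \delta_d$, then yields $\|r^{-2}g_{\eps,\xi_0}(r\,\cdot\,)\|_{C^N(\{|\tilde\eta|\lesssim 1\})} \lesssim r$, uniformly in $\eps \in [0,1]$ and $r \in (0, c_d)$. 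Shrinking $c_d$ if necessary, this brings $\tilde\Phi$ within the smallness hypothesis of Theorem~\ref{T:tao bilin}, which supplies
\[
\|(\mathcal{E}_{\tilde\Phi}\tilde F_1)(\mathcal{E}_{\tilde\Phi}\tilde F_2)\|_{L^p_{s,y}} \lesssim \|\tilde F_1\|_{L^2_y}\|\tilde F_2\|_{L^2_y}
\]
for any $p > \tfrac{d+3}{d+1}$, where $\tilde F_i$ is the rescaled partner of $F_i$.

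Finally, I undo the parabolic rescaling. It contributes $r^{-(d+2)/p}$ on the left-hand side and a factor $r^{d/2}$ in each $L^2$ norm on the right, which combine to produce the claimed $r^{d-(d+2)/p}$; the earlier affine map $A_{\eps,\xi_0}$ contributes only bounded, $\eps$- and $r$-independent constants. The main obstacle is precisely the parabolic rescaling step: without the second-order vanishing of $g_{\eps,\xi_0}$ at the origin, the correction $r^{-2}g_{\eps,\xi_0}(r\tilde\eta)$ would not tend to zero as $r\to 0$, and Theorem~\ref{T:tao bilin} could not be applied with bounds uniform in $\eps$ and $r$. Given those vanishing properties (which were built into the construction of $A_{\eps,\xi_0}$), the remainder of the proof is routine bookkeeping of Jacobians.
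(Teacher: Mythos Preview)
Your proposal is correct and follows essentially the same route as the paper's proof: both apply the affine normalization $A_{\eps,\xi_0}$, then a parabolic rescaling by $r$, verify that the resulting phase $\tfrac12|\tilde\eta|^2 + r^{-2}g_{\eps,\xi_0}(r\tilde\eta)$ satisfies the hypotheses of Theorem~\ref{T:tao bilin} (using, as you rightly emphasize, the second-order vanishing of $g_{\eps,\xi_0}$ at the origin), and finally track the Jacobians. The paper chooses $\xi_0$ so that $\kappa_1,\kappa_2 \subset \{|\xi-\xi_0|\le 3r\}$ and makes the diffeomorphism $F_{\eps,\xi_0}$ and the scaling computation explicit, but the content is the same.
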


Then the following refined Strichartz estimate follows similarly as in B\'egout and Vargas \cite{Begout-Vargas:2007:profile-schrod-higher-d}, see also Proposition~4.23 of Killip and Visan \cite{Killip-Visan:2008:clay-lecture-notes}.

\begin{lemma} \label{L:scale 1 refined Strichartz}
Let $q = \frac{2(d^2+3d+1)}{d^2}$.  If $f \in L^2_x$ and Fourier supported in a cap $\sigma_{\eps, \xi_0}$ of size $c_d$ in the annulus $\{|\xi|\sim 1\}$, then
\begin{equation} \label{E:scale 1 refined Strichartz}
\|S_{\eps,\xi_0}(t) f\|_{L^{\frac{2(d+2)}{d}}_{t,x}}\lesssim \bigl(\sup_\kappa |\kappa|^{\frac{d+2}{dq}-\frac12}\|S_{\eps,\xi_0}(t)f_\kappa\|_{L^q_{t,x}}\bigr)^{\frac1{d+2}}\|f\|_{L^2_x}^{\frac{d+1}{d+2}},
\end{equation}
where the supremum may be taken over caps $\kappa$ with
\begin{equation} \label{E:def T1}
\kappa \subset \{ |\xi|\sim 1\} \qtq{and} \diam(\kappa) < .01.
\end{equation}
\end{lemma}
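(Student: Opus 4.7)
The plan is to follow the by-now-standard refined Strichartz argument based on Whitney decomposition plus bilinear restriction (see, e.g., \cite{Begout-Vargas:2007:profile-schrod-higher-d, Killip-Visan:2008:clay-lecture-notes}), with Corollary~\ref{C:small ball bilin} playing the role of the bilinear input.  First I would Whitney-decompose the Fourier support of $f_{1,j}$: for each dyadic scale $r=2^{-k}$ (with $k\geq k_0$ and $r_{k_0}$ comparable to the diameter of $\supp\psi_{1,j}$), cover $\supp\psi_{1,j}$ by essentially disjoint caps of radius $r$ so that
$$
|S_\eps(t) f_{1,j}|^2 \;=\; \sum_{k\geq k_0}\ \sum_{\kappa\sim_k\kappa'} S_\eps(t) f_\kappa\,\overline{S_\eps(t) f_{\kappa'}} \;+\; \text{(diagonal)},
$$
where $\kappa\sim_k\kappa'$ means $\diam(\kappa)=\diam(\kappa')\sim\dist(\kappa,\kappa')\sim 2^{-k}$.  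The uniform ellipticity of the surface on $\supp\psi_{1,j}$ (exactly the setting of Corollary~\ref{C:small ball bilin}) guarantees that, as $(\kappa,\kappa')$ varies over pairs at fixed scale, the space-time Fourier supports of the products $S_\eps f_\kappa\overline{S_\eps f_{\kappa'}}$ have bounded overlap, and analogously across scales.

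Next I would estimate each pair product.  Corollary~\ref{C:small ball bilin} yields, for every $p>(d+3)/(d+1)$, the bound
$$
\|S_\eps f_\kappa\, S_\eps f_{\kappa'}\|_{L^p_{t,x}}\;\lesssim\; r^{\,d-(d+2)/p}\,\|f_\kappa\|_{L^2}\|f_{\kappa'}\|_{L^2}.
$$
Since $(d+3)/(d+1)<(d+2)/d<q/2$ for $q=2(d^2+3d+1)/d^2$, I would pick $p_1\in((d+3)/(d+1),(d+2)/d)$ and interpolate this $L^{p_1}$ estimate against the trivial H\"older bound $\|S_\eps f_\kappa\, S_\eps f_{\kappa'}\|_{L^{q/2}}\leq\|S_\eps f_\kappa\|_{L^q}\|S_\eps f_{\kappa'}\|_{L^q}$ to produce, at the target $L^{(d+2)/d}$,
$$
\|S_\eps f_\kappa\, S_\eps f_{\kappa'}\|_{L^{(d+2)/d}} \;\lesssim\; r^{\,a}\bigl(\|f_\kappa\|_{L^2}\|f_{\kappa'}\|_{L^2}\bigr)^{\theta}\bigl(\|S_\eps f_\kappa\|_{L^q}\|S_\eps f_{\kappa'}\|_{L^q}\bigr)^{1-\theta},
$$
with $\theta\in(0,1)$ and $a\in\R$ determined by the interpolation identity.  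The specific value of $q$ in the statement is engineered precisely so that the relation $\theta=(d+1)/(d+2)$ (the value needed to match the exponents on the right of \eqref{E:scale 1 refined Strichartz}) is achievable in the allowed range; a direct computation gives $p_1=(d^2+3d+1)/[d(d+1)]$, which one checks lies strictly in $((d+3)/(d+1),(d+2)/d)$.

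Finally I would assemble and sum.  Setting $M:=\sup_\kappa|\kappa|^{(d+2)/(dq)-1/2}\|S_\eps(f_{1,j})_\kappa\|_{L^q}$ and using $|\kappa|\sim r^d$ to absorb the $L^q$ factors into $M^{2(1-\theta)}=M^{2/(d+2)}$ together with a residual power of $r$, then applying the bounded-overlap quasi-orthogonality in $L^{(d+2)/d}$ (valid since $(d+2)/d\leq 2$ for $d\geq 2$, via a C\'ordoba--Fefferman-type argument on the essentially disjoint space-time Fourier supports) both within and across scales, and finally using Cauchy--Schwarz over caps combined with $\sum_\kappa\|f_\kappa\|_{L^2}^2\lesssim\|f_{1,j}\|_{L^2}^2$, one reaches \eqref{E:scale 1 refined Strichartz} after taking a square root.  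The main obstacle is precisely this final summation: at the critical interpolation parameter the per-pair exponent $a$ is engineered to cancel the residual $r$-power from the $M$-pullout, so the raw per-scale bound is scale-invariant; genuine convergence in $k$ is recovered by a standard but delicate fine-tuning -- a dyadic grouping of caps according to the size of $\|S_\eps f_\kappa\|_{L^q}$ (as in Killip--Visan) -- and this is where the exact values of $q$ and $\theta$ become essential.
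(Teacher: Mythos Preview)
Your proposal is correct and follows precisely the approach the paper intends: the paper's own proof reads, in its entirety, ``This lemma may be proved in the same way, \textit{mutatis mutandis}, as Proposition~4.23 of \cite{Killip-Visan:2008:clay-lecture-notes}, so we omit the details.'' Your outline---Whitney decomposition, Corollary~\ref{C:small ball bilin} as the bilinear input, interpolation at $p_1=(d^2+3d+1)/[d(d+1)]$ to hit $\theta=(d+1)/(d+2)$, and the Killip--Visan dyadic regrouping to recover summability---is exactly the content of that reference adapted to the present setting, and your verification that $p_1\in((d+3)/(d+1),(d+2)/d)$ is the one place where the argument is not quite identical to the Schr\"odinger case.
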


Now we are ready to prove Lemma~\ref{L:scale 1 refinement} by a decomposition of unit of $\{|\xi|\sim 1\}$ into many caps with diameters comparable to $c_d$. The cardinality of these caps depends on the spatial dimension only. More precisely, we recall that $\widehat {P_1^0 f} = \psi_1^0 \widehat f$, where $\psi_1^0$ is supported on $\{\tfrac12 \leq |\xi| \leq 2\}$.  There exists a finite decomposition 
\begin{equation} \label{E:decompose psi}
\psi_1^0 = \sum_{j=1}^{C_d} \psi_{1,j}, 
\end{equation}
with $\psi_{1,j} = \psi_1^0\phi_j$, for smooth bump functions $\phi_j:\R^d \to [0,1]$ whose supports have diameter equal to a small dimensional constant $c_d$' $C_d>0$ depends on $d$ only.  

By \eqref{E:decompose psi}, for any $f \in L^2_x$,
\begin{equation} \label{E:decompose f1}
P_1^0 f = \sum_{j=1}^{C_d} f_{1,j}, \qtq{where} \widehat{f_{1,j}} = \psi_{1,j}\widehat{f}, \quad i=1,2.
\end{equation}
By \eqref{E:decompose f1} and the triangle inequality,
\begin{equation} \label{E:small ball decoupling}
\|S_\eps(t) f_1\|_{L^{\frac{2(d+2)}d}_{t,x}} \leq \sum_{j=1}^{C_d}\|S_\eps(t) f_{1,j}\|_{L^{\frac{2(d+2)}d}_{t,x}} \leq C_d \max_j \|S_\eps(t) f_{1,j}\|_{L^{\frac{2(d+2)}d}_{t,x}}.
\end{equation}

\begin{proof}[The proof of Lemma~\ref{L:scale 1 refinement}] 
Since $|\psi_{1,j}| \leq 1$, by Plancherel, $\|f_{1,j}\|_{L^2_x} \leq \|f\|_{L^2_x}$.  Furthermore, we have $S_\eps(t)[(f_{1,j})_\kappa] = (S_\eps(t) f_\kappa)_{1,j}$.  By construction, $\|\check\psi_{1,j}\|_{L^1_x} \lesssim 1$, so by Young's convolution inequality (in the $x$-variable), $\|S_\eps(t)(f_{1,j})_\kappa\|_{L^q_{t,x}} \lesssim \|S_\eps(t)f_\kappa\|_{L^q_{t,x}}$.  So finally, by combining \eqref{E:small ball decoupling} and \eqref{E:scale 1 refined Strichartz}, we obtain \eqref{E:scale 1}.  The inequality \eqref{E:scale 0} follows in a similar manner (but is simpler because there is no $\eps$).  Therefore Lemma~\ref{L:scale 1 refinement} is proved. 
\end{proof}

%%%%%%%%%%%%%%%%%%%%%%%%%%%%%%%%%%%%%%
%%%%%%%%%%%%%%%%%%%%%%%%%%%%%%%%%%%%%%
%%%%%%%%%%%%%%%%%%%%%%%%%%%%%%%%%%%%%%

\section{Linear profile decomposition} \label{S:lpd}

%%%%%%%%%%%%%%%%%%%%%%%%%%%%%%%%%%%%%%
%%%%%%%%%%%%%%%%%%%%%%%%%%%%%%%%%%%%%%
%%%%%%%%%%%%%%%%%%%%%%%%%%%%%%%%%%%%%%

\begin{theorem}[Linear profile decomposition]\label{thm:linear-profile}
Let $(u_n)_{n\ge 1}$ be a bounded sequence of $L^2_x$ functions.  After passing to a subsequence, the following hold.  For each $j \geq 1$ there exist $\phi^j \in L^2_x$, a sequence of parameters $(h_n^j,\xi_n^j,x_n^j,t_n^j) \in (0,\infty) \times \R^d \times \R^d \times \R$, and a sequence of $L^2_x$ errors $(w_n^l)_{l,n \geq 1}$ such that for each $l \geq 1$, 
\begin{equation}\label{eq:profile-decomposition}
u_n=\sum_{j=1}^l S_\mu(-t_n^j)g^j_n[e^{i(\cdot)h^j_n\xi^j_n}\phi^j]+w^l_n,
\end{equation}
where $g^j_n(\phi):=\frac{1}{(h^j_n)^{d/2}}\phi(\frac{x-x^j_n}{h^j_n})$, and for each $j$, either $|h_n^j \xi_n^j| \to \infty$ or $\xi_n^j \equiv 0$.  The errors satisfy
\begin{equation}\label{eq:error-term}
\limsup_{l\to\infty}\limsup_{n\to\infty}\|D_\mu^{1/2}S_\mu(t)w^l_n\|_{L^{\frac{2(d+2)}d}_{t,x}(\mathbb{R}
\times\mathbb{R}^d)}=0.
\end{equation} 
For $j \neq k$, $(h_n^j,\xi_n^j,x_n^j,t_n^j)_{n\ge 1}$ and $(h_n^k,\xi_n^k,x_n^k,t_n^k)_{n\ge 1}$ are pairwise orthogonal in the sense that 
\begin{equation} \label{E:ortho}
\begin{aligned}
&\lim_{n \to \infty} \bigl( \frac{h_n^j}{h_n^k}+\frac{h_n^k}{h_n^j} + |h_n^j\xi_n^j-h_n^k\xi_n^k| + \frac{|t_n^k-t_n^j|}{(h_n^j)^4} + \frac{|t_n^k-t_n^j|(\mu+2|\xi_n^j|^2)}{(h_n^j)^2}  \\
&\qquad + \frac{|x_n^j-x_n^k+2(t_n^j-t_n^k)(2|\xi_n^j|^2+\mu)\xi_n^j|}{h_n^j}\bigr) = \infty.
\end{aligned}
\end{equation}
Furthermore, if $\lim_{n \to \infty}(\frac{h_n^j}{h_n^k} + \frac{h_n^k}{h_n^j}) \neq \infty$, then $h_n^j = h_n^k$ for all $n$, and if $\lim_{n \to \infty}|h_n^k\xi_n^k - h_n^j \xi_n^j| \neq \infty$, then $h_n^j \xi_n^j = h_n^k \xi_n^k$ for all $n$.  Finally, for each $l \geq 1$,
\begin{equation} \label{E:L2 decoup}
\lim_{n \to \infty}\bigl(\|u_n\|_{L^2_x}^2 - (\sum_{j=1}^l \|\phi^j\|_{L^2_x}^2 + \|w_n^l\|_{L^2_x}^2)\bigr) = 0
\end{equation} 
\end{theorem}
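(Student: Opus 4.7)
The plan is to prove this by the now-standard inductive extraction of profiles, driven by the refined Strichartz estimate of Proposition~\ref{P:refined Strichartz}, in the spirit of \cite{Bahouri-Gerard:1999:profile-wave, Begout-Vargas:2007:profile-schrod-higher-d}. First set $A := \limsup_n \|D_\mu^{d/(d+2)} S_\mu(t) u_n\|_{L^{2(d+2)/d}_{t,x}}$; if $A=0$ the theorem holds trivially with all $\phi^j = 0$. Otherwise, pass to a subsequence realizing the limsup, and apply \eqref{E:refined Strichartz} to produce $\mu$-caps $\kappa_n = \{|\xi-\xi_{n,0}| < r_n\}$ on which the weighted $L^q$ cap quantity is bounded below by a positive constant times $A^{1/\theta}(\sup_n\|u_n\|_{L^2})^{-(1-\theta)/\theta}$. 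From each $\kappa_n$ I would read off $h_n^1 := 1/r_n$; after a further subsequence extraction, $h_n^1|\xi_{n,0}|$ either converges to a finite limit (in which case set $\xi_n^1 \equiv 0$) or diverges to $\infty$ (in which case set $\xi_n^1 := \xi_{n,0}$). The constraint $8 r_n \leq |\xi_{n,0}| + \sqrt\mu$ from Definition~\ref{D:caps} keeps this dichotomy meaningful. Interpolating the $L^q$ lower bound against a trivial $L^\infty$ bound (Hausdorff--Young applied after restriction to $\kappa_n$) yields space-time points $(x_n^1, t_n^1)$ where $|S_\mu(t)(u_n)_{\kappa_n}|$ is pointwise bounded below after rescaling to unit frequency scale; then Banach--Alaoglu applied to $e^{-i(\cdot)h_n^1\xi_n^1}(g_n^1)^{-1} S_\mu(t_n^1) u_n$ produces a weak subsequential limit $\phi^1 \in L^2_x$ with $\|\phi^1\|_{L^2}$ bounded below by a positive power of $A/\sup_n\|u_n\|_{L^2}$.

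Setting $\phi_n^1 := S_\mu(-t_n^1) g_n^1[e^{i(\cdot)h_n^1\xi_n^1}\phi^1]$ and $w_n^1 := u_n - \phi_n^1$, weak convergence combined with the $L^2$-isometry property of the pseudo-symmetries gives the Pythagorean identity $\|w_n^1\|_{L^2}^2 = \|u_n\|_{L^2}^2 - \|\phi^1\|_{L^2}^2 + o(1)$. I would then iterate on $(w_n^1)$, extracting $\phi^2$ and so on; since $\sum_j \|\phi^j\|_{L^2}^2 \leq \sup_n \|u_n\|_{L^2}^2 < \infty$, the quantitative lower bound on each $\|\phi^j\|_{L^2}$ forces $\limsup_n \|D_\mu^{d/(d+2)} S_\mu(t) w_n^j\|_{L^{2(d+2)/d}} \to 0$, which implies \eqref{eq:error-term} (the $D_\mu^{1/2}$ bound follows from the $D_\mu^{d/(d+2)}$ one by interpolation with the uniform $L^\infty_t L^2_x$ estimate). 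Orthogonality \eqref{E:ortho} would be proved by contradiction: if it were to fail for some minimal pair $j<k$, then after passing to a subsequence $e^{-i(\cdot)h_n^k\xi_n^k}(g_n^k)^{-1} S_\mu(t_n^k) \phi_n^j$ would have a nonzero weak limit, contradicting the construction of $\phi^k$ as the weak limit of the same operators applied to $w_n^{k-1} = u_n - \sum_{i<k}\phi_n^i$. A final subsequence refinement turns non-diverging ratios into genuine equalities, as stated. Finally, \eqref{E:L2 decoup} follows by expanding $\|u_n\|_{L^2}^2 = \|\sum_{j \leq l}\phi_n^j + w_n^l\|_{L^2}^2$ and using asymptotic $L^2$-orthogonality of the $\phi_n^j$ (from parameter orthogonality via oscillatory-integral estimates) and the weak convergence of the symmetrized $w_n^l$ to zero against each $\phi^j$.

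The main obstacle is to carry out the parameter extraction and the symmetry-orthogonality argument with enough care that both regimes of \eqref{E:ortho} emerge naturally from the cap geometry, and so that the weak limit is verifiably nontrivial in each. The crucial calculation is the local Taylor expansion of the symbol $\mu|\xi|^2 + |\xi|^4$ at $\xi_{n,0}$: when $h_n|\xi_{n,0}|\to\infty$, the second-order term dominates and produces a Schr\"odinger-type propagator with Hessian coefficient $2|\xi_n^1|^2 + \mu$, together with a linear drift given by the group velocity $2(2|\xi_n^1|^2+\mu)\xi_n^1$, which is precisely the velocity appearing in the translation term of \eqref{E:ortho}; when $h_n|\xi_{n,0}|$ stays bounded the full fourth-order symbol survives the rescaling and the linear term drops out, forcing $\xi_n^j \equiv 0$. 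Verifying this approximation uniformly in $\mu \in \{0,1\}$ and across both regimes, and checking that the resulting inverse symmetries correctly diagonalize the geometry encoded in \eqref{E:ortho}, is where most of the technical work sits.
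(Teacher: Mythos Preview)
Your proposal is correct and follows essentially the same route as the paper: inductive profile extraction driven by Proposition~\ref{P:refined Strichartz}, reading $(h_n,\xi_n)$ from the cap center and radius, interpolating to an $L^\infty_{t,x}$ lower bound to locate $(x_n,t_n)$, extracting a weak limit with quantitative $L^2$ lower bound, and then the Pythagorean/decay argument for \eqref{eq:error-term}. The paper packages the orthogonality step slightly differently---it proves a dedicated dichotomy (Lemma~\ref{L:L2 ortho}) that $(B_n^k)^{-1}B_n^j$ either tends to zero in the weak operator topology or, after a subsequence, to an $L^2$ isometry in the strong topology, via an explicit factorization of $(B_n^k)^{-1}B_n^j$ into translation, Schr\"odinger-type, cubic, and quartic pieces analyzed by stationary phase---and it deduces \eqref{E:L2 decoup} inductively from the Pythagorean step rather than by a post-hoc expansion, but this is organizational rather than substantive and matches what you identify as ``where most of the technical work sits.''
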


The orthogonality condition of parameters implies the following.

\begin{proposition}[Orthogonality of profiles]\label{P:ST ortho}
For $j\neq k$, along the subsequence satisfying \eqref{E:ortho},
\begin{equation}\label{E:ST ortho}
\lim_{n\to\infty}\left\|\bigl(D_\mu^{\frac d{d+2}}S_\mu(t)S_\mu(-t_n^j) g_n^j[e^{i(\cdot)h_n^j\xi_n^j}\phi^j]\bigr)\bigl( D_\mu^{\frac d{d+2}}S_\mu(t)S_\mu(-t_n^k)g_n^k[e^{i(\cdot)h_n^k\xi_n^k}\phi^k]\bigr)\right\|_{L^{\frac{d+2}d}_{t,x}}=0.
\end{equation}
Thus by \eqref{eq:error-term},
\begin{equation} \label{E:Lp decoup}
\lim_{l \to \infty} \limsup_{n \to \infty} \bigl|\|D_\mu^{\frac d{d+2}} S_\mu(t)u_n\|_{L^{\frac{2(d+2)}d}_{t,x}}^{\frac{2(d+2)}d} - \sum_{j=1}^l \|D_\mu^{\frac{d}{d+2}}S_\mu(t-t_n^j) g_n^j [e^{i(\cdot)h_n^j\xi_n^j}\phi^j]\|_{L^{\frac{2(d+2)}d}_{t,x}}^{\frac{2(d+2)}d}\bigr| = 0.
\end{equation}
\end{proposition}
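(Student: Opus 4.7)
The plan is to prove \eqref{E:ST ortho} first (with \eqref{E:Lp decoup} then following by an algebraic expansion), by reducing to Schwartz profiles, representing each profile in a co-moving frame around its own parameters, and exploiting \eqref{E:ortho} via a case analysis.

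By H\"older splitting $L^{(d+2)/d}_{t,x} = L^{2(d+2)/d}_{t,x} \cdot L^{2(d+2)/d}_{t,x}$ and the Strichartz inequality \eqref{E:Strichartz}, the left side of \eqref{E:ST ortho} is bounded by a uniform constant times $\|\phi^j\|_{L^2_x}\|\phi^k\|_{L^2_x}$, so by density we may assume $\phi^j, \phi^k$ are Schwartz with $\widehat{\phi^j}, \widehat{\phi^k}$ supported in a fixed compact set. Writing $\omega(\xi) := |\xi|^4 + \mu|\xi|^2$ and substituting $\xi = \xi_n^j + \eta/h_n^j$ in the Fourier representation of $u_n^j(t,x) := S_\mu(t-t_n^j)g_n^j[e^{i(\cdot)h_n^j\xi_n^j}\phi^j](x)$ produces
\begin{equation*}
u_n^j(t,x) = (h_n^j)^{-1}\,e^{i\Phi_n^j(t,x)}\,V_n^j\bigl(s_n^j(t),\,y_n^j(t,x)\bigr),
\end{equation*}
with real phase $\Phi_n^j$, co-moving coordinates $s_n^j(t) = (t-t_n^j)/(h_n^j)^{\alpha_j}$ and $y_n^j(t,x) = (x - x_n^j - (t-t_n^j)(4|\xi_n^j|^2 + 2\mu)\xi_n^j)/h_n^j$, where $\alpha_j \in \{2,4\}$ is chosen so that the effective dispersion governing $V_n^j$ has a nontrivial limit, and $V_n^j$ is defined as an oscillatory integral in $\eta$ built from the Taylor expansion of $\omega$ around $\xi_n^j$. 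After a further subsequence, $V_n^j$ converges in $C^\infty_{\mathrm{loc}}$ to a Schwartz function $V^j$ solving a fixed linear dispersive equation of second- or fourth-order Schr\"odinger type, and $D_\mu^{d/(d+2)}$ introduces only a uniformly bounded, smooth multiplier at this scale, which may be absorbed into $V_n^j$.

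Next, pass to a further subsequence so that each term in \eqref{E:ortho} converges in $[0,\infty]$, and perform the change of variables $(t,x) \mapsto (s,y) = (s_n^j(t), y_n^j(t,x))$. By the scale-invariance of the Strichartz exponent, the Jacobian and the $(h_n^j)^{-1},(h_n^k)^{-1}$ factors conspire so that, up to uniformly bounded factors, the $L^{(d+2)/d}_{t,x}$-norm of the product equals that of $V^j(s,y)\,\tilde u_n^k(s,y)$, where $\tilde u_n^k$ is $u_n^k$ expressed in the $(s,y)$-chart. The divergence of some summand in \eqref{E:ortho} then forces exactly one of the following on $\tilde u_n^k$: a divergent relative scale (when $h_n^j/h_n^k + h_n^k/h_n^j \to \infty$); a divergent internal oscillation frequency (when the scales are matched and $|h_n^j\xi_n^j - h_n^k\xi_n^k| \to \infty$); a divergent $s$-translation (when one of the quadratic or quartic time ratios diverges); or a divergent $y$-translation. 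In each case, Schwartz decay of $V^j$ combined with either dominated convergence, or a non-stationary-phase / Riemann--Lebesgue argument in the frequency-separation case, forces $\|V^j\,\tilde u_n^k\|_{L^{(d+2)/d}_{s,y}} \to 0$, proving \eqref{E:ST ortho}.

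For \eqref{E:Lp decoup}, set $p = 2(d+2)/d$, $a_j = D_\mu^{d/(d+2)}S_\mu(t-t_n^j)g_n^j[e^{i(\cdot)h_n^j\xi_n^j}\phi^j]$, and $b = D_\mu^{d/(d+2)}S_\mu(t)w_n^l$. The pointwise inequality
\begin{equation*}
\Bigl|\bigl|\sum_{j=1}^l a_j + b\bigr|^p - \sum_{j=1}^l |a_j|^p\Bigr| \lesssim_{l,p} \sum_{j\neq k}|a_j||a_k|^{p-1} + |b|^p + |b|\bigl(\sum_j|a_j|\bigr)^{p-1}
\end{equation*}
combined with the H\"older bound $\||a_j||a_k|^{p-1}\|_{L^1_{t,x}} \leq \|a_j a_k\|_{L^{p/2}_{t,x}}\|a_k\|_{L^p_{t,x}}^{p-2}$ (with $p/2 = (d+2)/d$) reduces \eqref{E:Lp decoup} to \eqref{E:ST ortho}, uniform Strichartz bounds on the $a_j$, and \eqref{eq:error-term} (the latter after a Bernstein/Littlewood--Paley reduction that matches $D_\mu^{d/(d+2)}$ here with $D_\mu^{1/2}$ in \eqref{eq:error-term}). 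The principal obstacle is the case analysis in the previous paragraph: the absence of a single scaling symmetry for $\omega$ when $\mu > 0$ means that the effective dispersive equation governing $V^j$ switches between second- and fourth-order character depending on the size of $(h_n^j)^2(\mu + |\xi_n^j|^2)$, which is precisely why \eqref{E:ortho} features the two time-scale quantities simultaneously; the chart $(s,y)$ must be chosen compatibly with the regime so that $V^j$ is a nontrivial Schwartz limit and so that divergence of the relevant term in \eqref{E:ortho} truly produces an escape-to-infinity of the $k$-profile.
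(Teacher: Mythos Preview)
Your outline---reduce to nice profiles, write each in a co-moving frame, and run a case analysis on \eqref{E:ortho}---matches the paper's in broad strokes, but the implementations differ. The paper does \emph{not} pass to a limiting dispersive equation for the profiles. Instead it proves a pointwise envelope bound (Lemma~\ref{L:ptwise ub}): for Schwartz $\psi$ with compact Fourier support off the origin there is a single fixed $v_\psi\in L^{2(d+2)/d}_{t,x}$ (essentially $(1+|y|)^{-d/2}\chi_{\{|y|\gg|s|\}}+(1+|s|)^{-d/2}\chi_{\{|y|\lesssim|s|\}}$) such that $|D_\mu^{d/(d+2)}S_\mu(t)e^{i(\cdot)a}\psi|$ is dominated by $v_\psi$ composed with an explicit affine isometry of $L^{2(d+2)/d}_{t,x}$, uniformly in $a$ and $\mu$. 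The case analysis (Lemma~\ref{L:vj vk}) is then carried out on these dominators, after approximating $v^j,v^k$ by compactly supported functions; in the frequency-separated case $|a_n^j-a_n^k|\to\infty$ the paper shows the relative drift $b_n^{jk}$ diverges and reduces again to spatial separation---no oscillatory cancellation is used.

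Your version has a real gap. You assert that $V_n^j\to V^j$ in $C^\infty_{\mathrm{loc}}$ with $V^j$ a \emph{Schwartz} spacetime function, and then invoke ``Schwartz decay of $V^j$'' in the case analysis. But a solution of a second- or fourth-order Schr\"odinger equation with Schwartz initial data decays only like $|s|^{-d/2}$ in time; it is not Schwartz in spacetime, and that decay is exactly borderline for $L^{2(d+2)/d}$, so dominated convergence fails in the time-translation and scale-separation cases as you have written them. Relatedly, $C^\infty_{\mathrm{loc}}$ convergence is not enough to transfer the $L^{2(d+2)/d}$ control you need. The fix is either to upgrade to $V_n^j\to V^j$ in $L^{2(d+2)/d}$ (via Strichartz and $L^2$-convergence of the rescaled data) and then approximate $V^j$ by a compactly supported function, or to adopt the paper's pointwise-dominator route. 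Finally, your ``Riemann--Lebesgue'' treatment of the frequency-separation case is not correct as stated: the $L^{(d+2)/d}$ norm sees only the modulus of the product, so fast relative oscillation gives no cancellation; what large $|h_n^j\xi_n^j-h_n^k\xi_n^k|$ actually produces is divergence of the relative group velocity, which must be converted into spatial separation as in the paper.
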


We begin with the linear profile decomposition.  

\begin{proof}[Proof of Theorem~\ref{thm:linear-profile}] The proof follows a familiar outline (cf.\ \cite{Bahouri-Gerard:1999:profile-wave, Carles-Keraani:2007:profile-schrod-1d, Jiang-Pausader-Shao:2010:4th-NLS}).  We construct the linear profile decomposition inductively.  At each stage we will pass to a further subsequence, but to avoid a proliferation of sub- and superscripts, we will denote each subsequence by $(u_n)$.  For each $n,j$, let $B_n^j$ denote the $L^2_x$ isometry
$$
B_n^j = S_\mu(-t_n^j)g_n^j e^{i(\cdot)h_n^j\xi_n^j}.
$$

Set $w_n^0 = u_n$ and assume that for some $l \geq 0$ we have found a subsequence of $(u_n)$, $L^2_x$ functions $\phi^j$, and sequences $(h_n^j, \xi_n^j,x_n^j,t_n^j)$, $1 \leq j \leq l$ such that for all $1 \leq k \leq l$, we have the following: \eqref{E:L2 decoup} holds (with $k$ in place of $l$); \eqref{E:ortho} holds for all $k \neq j \in \{1,\ldots,l\}$; either $|h_n^k\xi_n^k| \to \infty$ or $\xi_n^k \equiv 0$; 
\begin{gather} \label{E:wnk}
u_n = \sum_{j=1}^k B_n^j\phi^j + w_n^k; \\
\label{E:phi^j}
(B_n^k)^{-1}w_n^{k-1} \rightharpoonup \phi^k, \: \text{weakly in $L^2_x$}.
\end{gather}

Define
$$
A^l = \limsup_{n \to \infty} \|w_n^l\|_{L^2_x}, \qquad \eps^l = \limsup_{n \to \infty} \|D_\mu^{\frac d{d+2}}S_\mu(t)w_n^l\|_{L^{\frac{2(d+2)}d}_{t,x}}.
$$
By the Strichartz inequality, $\eps^l \lesssim A^l$.  If $\eps^l = 0$, then we are done, so we may assume that $\eps^l > 0$.  Passing to a subsequence, we may assume that $\|w_n^l\|_{L^2_x} \sim A^l$ and $\|D_\mu^{\frac d{d+2}}S_\mu(t)w_n^l\|_{L^{\frac{2(d+2)}d}_{t,x}} \sim \eps^l$, for all $n$.  

By Proposition~\ref{P:refined Strichartz} and a little arithmetic, there exists a sequence $(\kappa_n)$ of $\mu$-caps such that for all sufficiently large $n$,
\begin{equation} \label{E:kappan}
A^l \bigl(\tfrac{\eps^l}{A^l}\bigr)^{\frac1\theta} \lesssim |\kappa_n|^{\frac{d+2}{dq}-\frac12}\|S_\mu(t)D_\mu^{\frac 2q}(w_n^l)_{\kappa_n}\|_{L^q_{t,x}}.
\end{equation}
By H\"older's inequality, Bernstein's inequality (since $(w_n^l)_{\kappa_n}$ is frequency localized), and Young's convolution inequality (since $\|\widehat{\phi_\kappa}\|_{L^1_x} \sim 1$), 
\begin{align*}
\|S_\mu(t)D_\mu^{\frac 2q}(w_n^l)_{\kappa_n}\|_{L^q_{t,x}} 
&\lesssim \|S_\mu(t)D_\mu^{\frac d{d+2}}(w_n^l)_{\kappa_n}\|_{L^{\frac{2(d+2)}d}_{t,x}}^{\frac{2(d+2)}{dq}}\|S_\mu(t)(w_n^l)_{\kappa_n}\|_{L^\infty_{t,x}}^{1-\frac{2(d+2)}{dq}}\\
&\lesssim (A^l)^{\frac{2(d+2)}{dq}} \|S_\mu(t)(w_n^l)_{\kappa_n}\|_{L^\infty_{t,x}}^{1-\frac{2(d+2)}{dq}}.
\end{align*}
Combining this with \eqref{E:kappan} and the fact that $(w_n^l)_{\kappa_n}$ is smooth (since it has compact Fourier support), there exist parameters $(x_n,t_n)$ such that
\begin{equation} \label{E:xntn}
A^l \bigl(\tfrac{\eps^l}{A^l}\bigr)^{\theta'} \lesssim |\kappa_n|^{-\frac12}|S_\mu(t_n)(w_n^l)_{\kappa_n}(x_n)|,
\end{equation} where $\theta'=\frac 1\theta$.

Write $\kappa_n = \{\xi: |\xi-\xi_n| < h_n^{-1}\}$, and set
$$
(h_n^{l+1},\xi_n^{l+1},x_n^{l+1},t_n^{l+1}) = (h_n,\xi_n,x_n,t_n).
$$
The sequence 
$$
(B_n^{l+1})^{-1}w_n^l = e^{-ixh_n\xi_n}g_n^{-1}S_\mu(t_n) w_n^l
$$
is bounded in $L^2_x$, so after passing to a subsequence, we may extract a weak limit; say
\begin{equation} \label{E:phi l+1}
(B_n^{l+1})^{-1} w_n^l \rightharpoonup \phi^{l+1}, \quad \text{weakly in $L^2_x$}.
\end{equation}
Thus we have
\begin{equation} \label{E:wnl decoup}
\begin{aligned}
&\lim_{n \to \infty} \bigl(\|w_n^l\|_{L^2_x}^2 - \|w_n^l -B_n^{l+1}\phi^{l+1}\|_{L^2_x}^2 - \|\phi^{l+1}\|_{L^2_x}^2\bigr)\\
&\qquad 
= \lim_{n \to \infty} \bigl(2\bigl\langle (B_n^{l+1})^{-1}w_n^l,\phi^{l+1}\bigr\rangle - 2\|\phi^{l+1}\|_{L^2_x}^2\bigr) = 0,
\end{aligned}
\end{equation}
which implies that \eqref{E:L2 decoup} holds with $l$ replaced by $l+1$ and
$$
w_n^{l+1} = w_n^l - B_n^{l+1}\phi^{l+1}.
$$

In addition, by \eqref{E:xntn}, the definition of $(w_n^l)_{\kappa_n}$, and a quick computation, 
\begin{align*}
A^l\bigl(\tfrac{\eps^l}{A^l})^{\theta'} &\lesssim h_n^{-\frac d2}|\int e^{ix_n\xi}e^{it_n(|\xi|^4+\mu|\xi|^2)}\phi(h_n(\xi-\xi_n)) \widehat{w_n^l}(\xi)\, d\xi | \\
&= |\langle e^{-i(\cdot)h_n \xi_n} g_n^{-1}S_\mu(t_n) w_n^l, \check\phi\rangle|.
\end{align*}
Taking the limit as $n \to \infty$, 
$$
A^l\bigl(\tfrac{\eps^l}{A^l})^{\theta'} \lesssim |\langle \phi^{l+1},\phi\rangle| \lesssim \|\phi^{l+1}\|_{L^2_x},
$$
so by \eqref{E:wnl decoup}, 
\begin{equation} \label{E:wnl+1 small}
\limsup_{n \to \infty} \|w_n^{l+1}\|_{L^2_x} \leq A^l \sqrt{1-c\bigl(\tfrac{\eps^l}{A^l}\bigr)^{2\theta'}}.
\end{equation}
Thus by the Strichartz inequality, after iterating, we obtain \eqref{eq:error-term}. (Indeed for \eqref{eq:error-term} to fail, $\eps^l$ must stay large, but in this case \eqref{E:wnl+1 small} implies that $A^l$ must decrease to 0, a contradiction to the Strichartz inequality.)

By changing $\phi^{l+1}$ if necessary, we may assume that either $|h_n^{l+1}\xi_n^{l+1}| \to \infty$ or $\xi_n^{l+1} \equiv 0$.  Indeed, if $|h_n^{l+1}\xi_n^{l+1}| \not\to \infty$, after passing to a subsequence, $h_n^{l+1}\xi_n^{l+1} \to \xi_0 \in \R^d$, and so we may replace $\phi^{l+1}$ with $e^{ix\xi_0}\phi^{l+1}$ and $\xi_n^{l+1}$ with 0.  Similar arguments justify the assertions that for each $k < l+1$, either $\bigl(\tfrac{h_n^k}{h_n^{l+1}} + \tfrac{h_n^{l+1}}{h_n^k}\bigr) \to \infty$ or $h_n^k \equiv h_n^{l+1}$ and that either $|h_n^k\xi_n^k-h_n^{l+1}\xi_n^{l+1}| \to \infty$ or $h_n^k\xi_n^k \equiv h_n^{l+1}\xi_n^{l+1}$.  

Finally, we turn to \eqref{E:ortho}.  The crux of the argument will be the following.

\begin{lemma} \label{L:L2 ortho}
If the limit in \eqref{E:ortho} is infinite, $(B_n^k)^{-1}B_n^j \to 0$ in the weak operator topology on $\mathcal B(L^2_x)$.  Otherwise, after passing to a subsequence, there exists an $L^2_x$ isometry $B^{kj}$ such that $(B_n^k)^{-1}B_n^j \to B^{kj}$ in the strong operator topology on $\mathcal B(L^2_x)$.
\end{lemma}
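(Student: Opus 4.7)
The strategy is to test $(B_n^k)^{-1}B_n^j$ on the dense subclass $\S(\R^d)\subset L^2_x$, working on the Fourier side where each $B_n^j$ is an explicit composition of a modulation, a dilation, a translation, and multiplication by the dispersion phase $e^{-it_n^j(|\xi|^4+\mu|\xi|^2)}$.  After passing to a subsequence, we may assume each of the five nonnegative terms in \eqref{E:ortho} has a definite limit in $[0,\infty]$, and that the Furthermore dichotomies have already been realized, so $h_n^j/h_n^k$ is identically $1$ or tends to infinity and $h_n^j\xi_n^j-h_n^k\xi_n^k$ is identically zero or diverges in norm.

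A direct Fourier computation gives, for $\phi,\psi\in\S(\R^d)$,
\begin{equation*}
\langle B_n^j\phi, B_n^k\psi\rangle = (h_n^j h_n^k)^{d/2}\int e^{i(x_n^k-x_n^j)\cdot\xi + i(t_n^k-t_n^j)(|\xi|^4+\mu|\xi|^2)}\hat\phi(h_n^j\xi - h_n^j\xi_n^j)\overline{\hat\psi(h_n^k\xi - h_n^k\xi_n^k)}\,d\xi.
\end{equation*}
When $h_n^j/h_n^k \to 0$ or $\infty$, the two Schwartz profiles live on incompatible scales and a direct estimate (pushing one factor through a power of the scale ratio) shows the integral vanishes.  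When the scalings agree, $h_n^j = h_n^k =: h_n$, but $|h_n(\xi_n^j - \xi_n^k)| \to \infty$, the change of variable $\eta = h_n\xi - h_n\xi_n^j$ puts the integrand in the form $e^{i(\cdots)}\hat\phi(\eta)\overline{\hat\psi(\eta + h_n(\xi_n^j - \xi_n^k))}$, which tends to $0$ pointwise and is dominated by a Schwartz function; dominated convergence closes this case.

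In the remaining situation $h_n^j = h_n^k = h_n$ and $\xi_n^j = \xi_n^k =: v_n$, the same substitution reduces the inner product, up to a unimodular constant, to $\int e^{i\Psi_n(\eta)}\hat\phi(\eta)\overline{\hat\psi(\eta)}\,d\eta$, where $\Psi_n$ is the degree-$4$ polynomial in $\eta$ obtained from the integral phase $y_n\cdot\xi + s_n(|\xi|^4+\mu|\xi|^2)$ by substituting $\xi = \eta/h_n + v_n$ and discarding the constant (here $y_n = x_n^k - x_n^j$, $s_n = t_n^k - t_n^j$).  A short expansion shows that the Taylor data of $\Psi_n$ at $\eta = 0$ reproduce exactly the three remaining orthogonality indicators: the quartic leading coefficient is $s_n/h_n^4$; the Hessian at $0$ has operator norm comparable to $s_n(2|v_n|^2 + \mu)/h_n^2$; the linear gradient equals $[y_n + 2s_n(2|v_n|^2 + \mu)v_n]/h_n$.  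The cubic piece, of order $s_n|v_n|/h_n^3$, is controlled via AM--GM by the quartic and the Hessian.  If all three indicators are bounded, a further subsequence makes every coefficient of $\Psi_n$ converge, so $e^{i\Psi_n} \to e^{i\Psi_\infty}$ pointwise; since $|e^{i\Psi_n}| = 1$, dominated convergence gives strong-operator convergence of $(B_n^k)^{-1}B_n^j$ to the $L^2_x$-isometry defined by the Fourier multiplier $e^{i\Psi_\infty}$, which is the required $B^{kj}$.

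When instead one of the three indicators diverges, we need $\int e^{i\Psi_n(\eta)}\hat\phi(\eta)\overline{\hat\psi(\eta)}\,d\eta \to 0$, which we obtain from a van der Corput estimate for polynomial phases of bounded degree: pick a one-dimensional slice along which the dominant Taylor term of $\Psi_n$ stays nondegenerate (any direction for the quartic; the $v_n$-direction for the Hessian; the gradient direction for the linear term), apply the one-dimensional van der Corput lemma to the innermost integral to extract quantitative decay, and handle the transverse directions using the uniform Schwartz decay of $\hat\phi\overline{\hat\psi}$.  The main obstacle is this last step: when several Taylor coefficients of $\Psi_n$ are individually large but only one of the combinations in \eqref{E:ortho} genuinely diverges, one must interpolate carefully between the subcases to rule out spurious cancellation and to ensure that the van der Corput decay survives the transverse integration.
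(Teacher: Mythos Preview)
Your approach is essentially the paper's: both reduce to Schwartz test functions, dispose of the scaling and frequency-center mismatches first, and then analyze the remaining degree-four polynomial phase via oscillatory integral estimates.  The paper, however, organizes the last step differently in a way that cleanly dissolves the obstacle you flagged.  Rather than keeping the full phase $\Psi_n$ intact and worrying about interference between Taylor orders, it factors $(B_n^k)^{-1}B_n^j$ as $\omega_n^{kj}T_n^{kj}S_n^{kj}R_n^{kj}P_n^{kj}$, where $P,R,S,T$ carry the quartic, cubic, quadratic, and linear parts of the phase respectively, and treats them \emph{sequentially from highest degree down}.  If the quartic indicator $|t_n^k-t_n^j|/h_n^4$ diverges, stationary phase on $P_n^{kj}$ alone (the fourth derivative is constant) gives $L^\infty$ decay and hence weak convergence to zero; otherwise a subsequence makes $P_n^{kj}$ converge strongly, and one passes to $T_n^{kj}S_n^{kj}R_n^{kj}$.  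Repeating with the cubic (third directional derivative constant in the $\xi_n$-direction), then the quadratic, then the translation, one never has to confront cancellation between different degrees: at each stage the higher-degree pieces have already been frozen.  Your AM--GM observation that the cubic is controlled by the quartic and quadratic indicators is exactly why the paper's hierarchy matches the five terms in \eqref{E:ortho} without a separate cubic entry.  Both routes work; the sequential factoring just makes the ``spurious cancellation'' issue disappear by construction.
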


\begin{proof}
Let $B_n^{kj} = (B_n^k)^{-1}B_n^j$.  It suffices to prove that if the limit in \eqref{E:ortho} is infinite, 
$$
\lim_{n \to \infty} \langle B_n^{kj}\phi, \psi \rangle =0, 
$$
for all Schwartz functions $\phi,\psi$ with compact frequency support, and that otherwise after passing to a subsequence, $B_n^{kj}\phi \to B^{kj}\phi$ in $L^2_x$ for all $\phi \in L^2_x$.  

A simple computation shows that 
$$\|B_n^{kj}\phi\|_{L^\infty_x} \leq \bigl(\tfrac{h_n^k}{h_n^j}\bigr)^{\frac d2}\|\hat\phi\|_{L^1_\xi}, \qquad \|(B_n^{kj})^{-1}\psi\|_{L^\infty_x} \leq \bigl(\tfrac{h_n^j}{h_n^k}\bigr)^{\frac d2}\|\hat\psi\|_{L^1_\xi}.$$
By H\"older's inequality, and Plancherel,
$$
|\langle B_n^{kj}\phi,\psi\rangle| \lesssim \min\{\bigl(\tfrac{h_n^k}{h_n^j}\bigr)^{\frac d2}\|\hat\phi\|_{L^1_\xi}\|\hat\psi\|_{L^\infty_\xi}, \, \bigl(\tfrac{h_n^j}{h_n^k}\bigr)^{\frac d2}\|\hat\psi\|_{L^1_\xi}\|\hat\phi\|_{L^\infty_\xi}\},
$$
and since $\phi,\psi$ are Schwartz, if $\bigl(\tfrac{h_n^k}{h_n^j} + \tfrac{h_n^j}{h_n^k}\bigr) \to \infty$, the right hand side of the above inequality tends to 0.  Thus we may henceforth assume that $h_n^j \equiv h_n^k \equiv h_n$.  

Now assume that $|h_n\xi_n^k - h_n\xi_n^j| \to \infty$.  By assumption, $\hat\phi,\hat\psi$ have compact support; say $\supp\hat\phi, \supp\hat\psi \subset\{|\xi| \leq R\}$.  Then
$$
\supp \widehat{B_n^j\phi} \subset\{|\xi-\xi_n^j| < h_n^{-1}R\}, \qquad \supp\widehat{B_n^k\psi} \subset\{|\xi-\xi_n^k| < h_n^{-1}R\}.
$$
For sufficiently large $n$, these sets are disjoint, so 
$$
\langle B_n^{kj}\phi,\psi\rangle = \langle \widehat{B_n^j\phi},\widehat{B_n^k\psi}\rangle \to 0.
$$
Thus we may assume that $\xi_n^j \equiv \xi_n^k \equiv \xi_n$.  

With these assumptions in place, we compute
$$
B_n^{kj}\phi = \omega_n^{kj}T_n^{kj}S_n^{kj}R_n^{kj}P_n^{kj}\phi,
$$
where
\begin{gather*}
\omega_n^{kj} = \exp[i(\xi_n(x_n^k-x_n^j)+(t_n^k-t_n^j)(|\xi_n|^4+\mu|\xi_n|^2))],\\
T_n^{kj}\psi(x) = \psi(x+\tfrac{x_n^k-x_n^j+(t_n^k-t_n^j)(4|\xi_n|^2\xi_n+2\mu\xi_n)}{h_n}),\\
S_n^{kj}\psi = \int \exp[i(x\xi+\tfrac{t_n^k-t_n^j}{h_n^2}(2|\xi|^2|\xi_n|^2+4(\xi\xi_n)^2+\mu|\xi|^2))]\hat\psi(\xi)\, d\xi,\\
R_n^{kj}\psi(x) = \int\exp[i(x\xi+\tfrac{4(t_n^k-t_n^j)}{h_n^3}|\xi|^2\xi\xi_n)]\hat\psi(\xi)\, d\xi,\\
P_n^{kj} \psi = \exp[\tfrac{i(t_n^k-t_n^j)}{h_n^4}\Delta^2]\psi.
\end{gather*}

After passing to a subsequence, $\omega_n^{kj} \to \omega \in S^1$, so $B_n^{jk}\phi-\omega T_n^{kj}S_n^{kj}R_n^{kj}P_n^{kj}\phi \to 0$ in $L^2_x$.  Thus it suffices to show that the conclusions of the lemma hold for $T_n^{kj}S_n^{kj}R_n^{kj}P_n^{kj}$ instead of $B_n^{kj}$.

We write
$$
T_n^{kj}S_n^{kj}R_n^{kj}P_n^{kj}\phi(x) = \int e^{i\Phi_n^{kj}(x,\xi)}\hat\phi(\xi)\, d\xi,
$$
where
\begin{align*}
\Phi_n^{kj}(x,\xi) &= \xi\cdot\left(x+\frac{x_n^k-x_n^j+(t_n^k-t_n^j)(4|\xi_n|^2\xi_n+2\mu\xi_n)}{h_n}\right)+\frac{(t_n^k-t_n^j)|\xi|^4}{h_n^4}\\& \qquad
+\frac{(t_n^k-t_n^j)(2|\xi_n|^2|\xi|^2+4(\xi_n\xi)^2+\mu|\xi|^2)}{h_n^2} + \frac{4(t_n^k-t_n^j)|\xi|^2\xi_n\xi}{h_n^3}.
\end{align*}
Since $\partial_{\xi_1}^4\Phi_n^{kj}(x,\xi) = \tfrac{24(t_n^k-t_n^j)}{h_n^4}$, by the method of stationary phase (\cite[Chapter VIII.2.2]{Stein:1993}), 
$$
|T_n^{kj}S_n^{kj}R_n^{kj}P_n^{kj}\phi(x)|\lesssim (1+\tfrac{|t_n^k-t_n^j|}{h_n^4})^{-\frac14}.
$$
Thus if $\frac{|t_n^k-t_n^j|}{h_n^4} \to \infty$, $T_n^{kj}S_n^{kj}R_n^{kj}P_n^{kj}\phi \rightharpoonup 0$, weakly in $L^2_x$.  Thus we may assume that $\frac{|t_n^k - t_n^j|}{h_n^4} \not\to\infty$.  Passing to a subsequence, $\frac{t_n^k-t_n^j}{h_n^4} \to s^{kj}$, so 
$$
\|P_n^{kj}\phi - e^{is^{kj}\Delta^2}\phi\|_{L^2_x} \to 0,
$$
for every $\phi \in L^2_x$.  Thus it suffices to show that the conclusions of the lemma hold for $T_n^{kj}S_n^{kj}R_n^{kj}$ instead of $B_n^{kj}$.  

Passing to a subsequence, either $\xi_n \equiv 0$ or $\frac{\xi_n}{|\xi_n|} \to \xi_0 \in S^{d-1}$.  The case when $\xi_n \equiv 0$ is much easier, so we assume henceforth that $\frac{\xi_n}{|\xi_n|} \to \xi_0 \in S^{d-1}$.  Passing to a further subsequence, $0 < \xi_n\cdot\xi_0\sim |\xi_n|$ for all $n$.     We write
$$
T_n^{kj}S_n^{kj}R_n^{kj} \phi(x) = \int e^{i\Phi_n^{kj}(x,\xi)}\hat\psi(x)\, d\xi,
$$
where now we set
\begin{align*}
\Phi_n^{kj}(x,\xi) &= \xi\cdot\left(x+\frac{x_n^k-x_n^j+(t_n^k-t_n^j)(4|\xi_n|^2\xi_n+2\mu \xi_n)}{h_n}\right)+\\& \qquad
+\frac{(t_n^k-t_n^j)(2|\xi_n|^2|\xi|^2+4(\xi_n\xi)^2+\mu|\xi|^2)}{h_n^2} + \frac{4(t_n^k-t_n^j)|\xi|^2\xi_n\xi}{h_n^3}.
\end{align*}
Since
$$
|(\xi_0\cdot\nabla_\xi)^3\Phi_n^{kj}(x,\xi)| = |\tfrac{24(t_n^k-t_n^j)|\xi_n|}{h_n^3} \tfrac{\xi_n\xi_0}{|\xi_n|}| \sim \tfrac{|t_n^k-t_n^j||\xi_n|}{h_n^3},
$$
if $\frac{|t_n^k-t_n^j||\xi_n|}{h_n^3} \to \infty$, by stationary phase (as above), $\|T_n^{kj}S_n^{kj}R_n^{kj}\phi\|_{L^\infty_x} \to 0$, so $T_n^{kj}S_n^{kj}R_n^{kj}\phi\rightharpoonup 0$, weakly in $L^2_x$.  Otherwise, as above, after passing to a subsequence, $R_n^{kj}\phi \to R^{kj}\phi$ in $L^2_x$, for some unitary operator $R^{kj}$.  Thus it suffices to consider $T_n^{kj}S_n^{kj}$.  

Similar arguments show that $T_n^{kj}S_n^{kj}\phi \rightharpoonup 0$, weakly in $L^2_x$ if $\frac{|t_n^k-t_n^j|(2|\xi_n|^2+\mu)}{h_n^2} \to \infty$, so we may assume that this term is bounded, and after passing to a subsequence, $S_n^{kj}\phi \to S^{kj}\phi$ in $L^2_x$.  This reduces matters to proving that the conclusions of the lemma hold for $T_n^{kj}$, and since $T_n^{kj}$ is just a translation, elementary arguments show that if $\tfrac{|x_n^k-x_n^j+(t_n^k-t_n^j)(4|\xi_n|^2\xi_n+2\mu\xi_n)|}{h_n} \to \infty$, then $T_n^{kj}\phi \rightharpoonup 0$, weakly in $L^2_x$ and if $\tfrac{x_n^k-x_n^j+(t_n^k-t_n^j)(4|\xi_n|^2\xi_n+2\mu\xi_n)}{h_n} \to y^{kj} \in \R^d$, then $T_n^{kj}\phi \to \phi(\cdot+y^{kj})$ in $L^2_x$. This completes the proof of the lemma.  
\end{proof}

Now we complete the proof of the linear profile decomposition by showing that \eqref{E:ortho} holds for all $1 \leq k < j = l+1$.  Suppose \eqref{E:ortho} failed for some $1 \leq k < j = l+1$.  Then 
\begin{align*}
0 &= \wlim_{n \to \infty} ((B_n^k)^{-1}w_n^{k-1} - \phi^k) = \wlim_{n \to \infty}(B_n^k)^{-1}w_n^k \\&= \wlim_{n \to \infty} (B_n^k)^{-1}\bigl[B_n^{k+1}\phi^{k+1}+\cdots+B_n^l\phi^l + w_n^l].
\end{align*}
We have assumed that \eqref{E:ortho} holds for all $k < j \leq l$, so by Lemma~\ref{L:L2 ortho},
$$
\wlim_{n \to \infty}(B_n^k)^{-1}\bigl[B_n^{k+1}\phi^{k+1}+\cdots+B_n^l\phi^l] = 0,
$$
which implies that 
$$
\wlim_{n \to \infty} (B_n^k)^{-1}w_n^l = 0.
$$
On the other hand, by Lemma~\ref{L:L2 ortho}, there exists a unitary operator $B^{l+1,k}$ such that after passing to a subsequence, $(B_n^{l+1})^{-1}B_n^k \to B^{l+1,k}$ in the strong operator topology on $L^2_x$; thus for any test function $\psi$,
\begin{align*}
\langle (B^{l+1,k})^{-1}\phi^{l+1},\psi\rangle &= \langle \phi^{l+1},B^{l+1,k}\psi\rangle = \lim_{n \to \infty} \langle B^{l+1}_nw_n^l,B^{l+1,k}\psi\rangle \\
&= \lim_{n \to \infty} \langle B^{l+1}_n w_n^l, (B_n^{l+1})^{-1}B_n^k \psi \rangle = \lim_{n \to \infty} \langle (B^k_n)^{-1}w_n^l,\psi\rangle = 0.
\end{align*}
Since $B^{l+1,k}$ is unitary and $\phi^{l+1} \not\equiv 0$, this is a contradiction.  Thus \eqref{E:ortho} must hold for all $1 \leq k < j \leq l+1$, and this completes the proof of Theorem~\ref{thm:linear-profile}.
\end{proof}

Next we prove Proposition~\ref{P:ST ortho}.  
\begin{proof}[Proof of Proposition~\ref{P:ST ortho}]
Since 
$$
\psi \mapsto D_\mu^{\frac d{d+2}}S_\mu(t)S_\mu(-t_n^j) g_n^j[e^{i(\cdot)h_n^j\xi_n^j}\psi]
$$ 
is a bounded linear operator from $L^2_x \to L^{\frac{2(d+2)}d}_{t,x}$, with operator norm bounded by a constant independent of $\mu$, $h_n^j$, $\xi_n^j$, $x_n^j$, $t_n^j$, by standard approximation arguments, it suffices to prove \eqref{E:ST ortho} for $\phi^j$ and $\phi^k$ lying in some dense subclass of $L^2_x$.  We will assume henceforth that they are Schwartz functions whose Fourier transforms are supported on a compact set that does not contain 0.  

Our proof will use the following pointwise upper bounds for $|D_\mu^{\frac d{d+2}} S_\mu(t) e^{i(\cdot) a}\psi(x)|$.   

\begin{lemma} \label{L:ptwise ub}
Fix $\mu \geq 0$ and let $\psi$ be a Schwartz function with compact frequency support that does not contain 0.  There exists an $L^{\frac{2(d+2)}d}_{t,x}$ function $v=v_\psi$, depending only on $\psi$, such that 
\begin{equation} \label{E:v 0}
|D_\mu^{\frac d{d+2}}S_\mu(t)\psi(x)| \leq  (1+\mu)^{\frac d{2(d+2)}} v((1+\mu) t,x),
\end{equation}
and such that if $|a| \gg \max\{\jp\xi : \xi \in \supp \hat\psi\}$, 
\begin{equation} \label{E:v a}
|D_\mu^{\frac d{d+2}}S_\mu(t)e^{i(\cdot)a}\psi(x)| \leq (2|a|^2+\mu)^{\frac d{2(d+2)}}v((2|a|^2+\mu)t,x+(4|a|^2+2\mu)at)
\end{equation}
\end{lemma}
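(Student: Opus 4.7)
The plan is to exhibit the wave-packet structure of $D_\mu^{\frac{d}{d+2}}S_\mu(t)e^{i(\cdot)a}\psi(x)$ explicitly on the Fourier side, and then define $v$ as the supremum of a parametrized family of oscillatory integrals. First, substituting $\xi=a+\eta$ in the defining integral and expanding $|a+\eta|^4+\mu|a+\eta|^2$ in powers of $\eta$, one groups the constant and linear-in-$\eta$ terms in the phase with the variables $x$ and $t$ to reveal the wave-packet coordinates $s=(2|a|^2+\mu)t$, $y=x+(4|a|^2+2\mu)at$, and the identity
\[
|D_\mu^{\frac{d}{d+2}}S_\mu(t)e^{i(\cdot)a}\psi(x)| = (2|a|^2+\mu)^{\frac{d}{2(d+2)}}|u_p(s,y)|,
\]
where
\[
u_p(s,y) := \int e^{iy\cdot\eta+isP_p(\eta)}W_p(\eta)\widehat\psi(\eta)\,d\eta,
\]
with $p=(\alpha,\beta,b)$, $\alpha=|a|^2/(2|a|^2+\mu)\in[0,\tfrac12]$, $\beta=1/(2|a|^2+\mu)$, $b=a/|a|$,
\[
P_p(\eta)=|\eta|^2+4\alpha(b\cdot\eta)^2+4\sqrt{\alpha\beta}(b\cdot\eta)|\eta|^2+\beta|\eta|^4,
\]
\[
W_p(\eta)=[1-\alpha+2\sqrt{\alpha\beta}(b\cdot\eta)+\beta|\eta|^2]^{\frac{d}{2(d+2)}}.
\]
The hypothesis $|a|\gg\langle\xi\rangle$ on $\supp\widehat\psi$ confines $\beta$ to a bounded interval $[0,\beta_0]$ with $\beta_0$ depending only on $\psi$, so $p$ ranges over a compact set. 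A parallel, simpler computation handles the base case $a=0$: introducing $\lambda=\mu/(1+\mu)$ and $\gamma=1-\lambda$ gives the analogous identity with scaling factor $(1+\mu)^{\frac{d}{2(d+2)}}$, phase $\gamma|\xi|^4+\lambda|\xi|^2$, weight $[\lambda+\gamma|\xi|^2]^{\frac{d}{2(d+2)}}$, $y=x$, and $s=(1+\mu)t$.

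Next, define $v(s,y):=\sup_{p\in\mathcal{P}}|u_p(s,y)|$, where $\mathcal{P}$ is the compact union of the two parameter sets above. The pointwise bounds \eqref{E:v 0} and \eqref{E:v a} are then immediate. The heart of the argument is to verify $v\in L^{\frac{2(d+2)}{d}}_{t,x}$. For each fixed $p$, $u_p$ is essentially a solution at time $s$ to the constant-coefficient dispersive equation $i\partial_s u_p=-P_p(D)u_p$, and the Hessian of $P_p$ is uniformly positive definite on $\supp\widehat\psi$ (in the $a\neq 0$ case, $\nabla^2 P_p=2\mathbb{I}+8\alpha\, b\otimes b$ plus $O(\sqrt{\alpha\beta}|\eta|+\beta|\eta|^2)$ terms that are uniformly small since $\beta$ is small; in the base case, $\nabla^2 P_p(\xi)\geq(2\lambda+12\gamma|\xi|^2)\mathbb{I}$, which is uniformly positive definite since $\lambda+\gamma=1$ and $|\xi|$ is bounded below on $\supp\widehat\psi$). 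Thus the Strichartz inequality \eqref{E:Strichartz} applies uniformly in $p$, giving $\|u_p\|_{L^{\frac{2(d+2)}{d}}_{s,y}}\lesssim_\psi 1$.

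The main obstacle is to upgrade this uniform-in-$p$ Strichartz bound into an $L^{\frac{2(d+2)}{d}}$ bound on the pointwise supremum $v$. The strategy is to produce a single pointwise envelope $\Phi_\psi\in L^{\frac{2(d+2)}{d}}_{s,y}$, depending only on $\psi$, such that $|u_p(s,y)|\leq\Phi_\psi(s,y)$ uniformly in $p$. Because $\widehat\psi$ is smooth and compactly supported and $|\nabla P_p|$ is uniformly bounded on $\supp\widehat\psi$, repeated integration by parts in $\eta$ (using that $|y+s\nabla P_p(\eta)|\gtrsim|y|$ whenever $|y|\geq C(1+|s|)$ for a suitable $C$ uniform in $p$) yields $|u_p(s,y)|\lesssim_N\langle y\rangle^{-N}$ on the region $\{|y|\geq C(1+|s|)\}$. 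On the complementary region $\{|y|\leq C(1+|s|)\}$, the uniform ellipticity of $\nabla^2 P_p$ and the classical stationary phase/dispersive estimate give $|u_p(s,y)|\lesssim\langle s\rangle^{-d/2}$ uniformly in $p$. Setting $\Phi_\psi(s,y):=C_\psi\min\{\langle y\rangle^{-N},\,\langle s\rangle^{-d/2}\mathbf{1}_{|y|\leq C(1+|s|)}\}$ with $N$ large enough, a direct computation shows $\Phi_\psi\in L^{\frac{2(d+2)}{d}}_{s,y}$ and dominates all $u_p$, hence bounds $v$. The subtlety is that different $p$ correspond to wave packets travelling at different group velocities $-s\nabla P_p(\eta^*)$, so the envelope has to absorb all these bicharacteristics simultaneously; the compactness of $\mathcal{P}$ and the uniform control of $\nabla P_p$ on $\supp\widehat\psi$ are precisely what make this possible.
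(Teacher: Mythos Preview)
Your approach coincides with the paper's: pass to wave-packet coordinates $(s,y)$, then dominate by a single envelope obtained from integration by parts on $\{|y|\gtrsim|s|\}$ and stationary phase on $\{|y|\lesssim|s|\}$; the paper simply takes $v$ to be that envelope directly rather than first defining $v$ as a parameter supremum and then bounding it. One slip to fix: your formula $\Phi_\psi=C_\psi\min\{\langle y\rangle^{-N},\langle s\rangle^{-d/2}\mathbf{1}_{|y|\le C(1+|s|)}\}$ vanishes on $\{|y|>C(1+|s|)\}$, so it cannot dominate $u_p$ there---replace it by the piecewise sum $C_\psi\bigl[\langle y\rangle^{-N}\mathbf{1}_{|y|>C(1+|s|)}+\langle s\rangle^{-d/2}\mathbf{1}_{|y|\le C(1+|s|)}\bigr]$, which is exactly the $v$ the paper writes down.
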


\begin{proof}[Proof of Lemma~\ref{L:ptwise ub}]
We give the details for the second case, when $|a| \gg \max\{\jp{\xi} : \xi \in \supp \hat \psi\}$.  The case when $a=0$ is similar, but a little simpler.  

Consider the function $w_{a,\mu}(\xi) = \bigl(\frac{\sqrt{\mu+|\xi+a|^2}}{\sqrt{\mu+2|a|^2}}\bigr)^{\frac d{d+2}}$.  Then $w_{a,\mu}$ and all of its derivatives are bounded on the support of $\hat\psi$, uniformly in $a$ and $\mu$.  This follows from a simple induction argument and the fact that 
$$
c_\psi \leq |\xi+a| \leq \sqrt{\mu+|\xi+a|^2} \leq \sqrt{\mu+|a|^2}+|\xi| \leq 2\sqrt{\mu+2|a|^2},
$$
for all $\xi \in \supp \hat\psi$.  

Fix $x$ and define $y = x+t(4|a|^2a + 2\mu a)$.  Then 
\begin{align}\notag
&|D_\mu^{\frac d{d+2}}S_\mu(t) e^{i(\cdot)a}\psi(x)|
= (\mu+2|a|^2)^{\,\frac d{2(d+2)}}\left|\int_{\R^d} e^{i(x\xi+t(|\xi|^4+\mu|\xi|^2)} w_{a,\mu}(\xi-a) \hat\psi(\xi-a)\, d\xi\right|\\ \label{E:stuff phi}
&\qquad =  (\mu+2|a|^2)^{\,\frac d{2(d+2)}}\left|\int_{\R^d}e^{i(y\xi+t(|\xi|^4+4|\xi|^2\xi a+2|\xi|^2|a|^2+4(\xi a)^2 + \mu|\xi|^2)}w_{a,\mu}(\xi)\hat\psi(\xi)\, d\xi\right|.
\end{align}
Define $\Phi = \Phi_{t,y,a,\mu}$ by
$$
\Phi(\xi) = y\xi+t(|\xi|^4 + 4|\xi|^2\xi \cdot a + 2|\xi|^2|a|^2 + 4(\xi a)^2 + \mu|\xi|^2).
$$
We compute the gradient and Hessian of $\Phi$:
\begin{gather}
\label{E:grad Phi}
\nabla\Phi(\xi) = y + t(4|\xi|^2\xi + 8(\xi a)\xi + 4|\xi|^2 a + 4|a|^2\xi + 8(\xi a)a + 2\mu\xi),\\
\label{E:Hess Phi}
H_\Phi = \bigl(\Phi_{ij}(\xi)\bigr) = t\bigl(8\xi_i\xi_j + 4|\xi|^2\delta_{ij} + 8 \xi_i a_j + 8(\xi a)\delta_{ij}+ 8a_i\xi_j + 4|a|^2\delta_{ij} + 8a_i a_j + 2\mu\delta_{ij}\bigr),
\end{gather}
where $\delta_{ij}$ equals 1 if $i=j$ and 0 otherwise.  

We now prove the estimate \eqref{E:v a} by standard techniques from harmonic analysis (see Stein \cite[Chapter 8]{Stein:1993}). By H\"older's inequality and \eqref{E:stuff phi},
\begin{equation} \label{E:stuff phi lesssim 1}
|D_\mu^{\frac d{d+2}}S_\mu(t) e^{i(\cdot)a}\psi(x)| \leq (\mu+2|a|^2)^{\frac d{2(d+2)}}\|w_{a,\mu}\|_{L^\infty_\xi}\|\hat\psi\|_{L^1_\xi} \lesssim (\mu+2|a|^2)^{\frac d{2(d+2)}}, 
\end{equation}
for all $(t,x) \in \R^{1+d}$.  

On the support of $\hat\psi$, 
$$
\bigl|4|\xi|^2\xi + 8(\xi a)\xi + 4|\xi|^2 a + 4|a|^2\xi + 8(\xi a)a + 2\mu\xi\bigr| \lesssim \mu+2|a|^2,
$$
so if $(\mu+2|a|^2)|t| \ll |y|$, $|\nabla \Phi(\xi)| \gtrsim |y|$ throughout the support of $\hat\psi$.  Therefore for any $N \geq 1$, integrating by parts $N$ times in the right side of \eqref{E:stuff phi},
\begin{equation} \label{E:stuff phi x decay}
|D_\mu^{\frac d{d+2}}S_\mu(t) e^{i(\cdot)a}\psi(x)| \leq C_{\psi,N} (\mu+2|a|^2)^{\frac d{2(d+2)}}\bigl(\tfrac 1{1+|y|}\bigr)^N, 
\end{equation}
whenever $(\mu+2|a|^2)|t| \ll |y|$.  

If $(\mu+2|a|^2)|t|\gtrsim |y|$, then $\nabla \Phi$ may vanish on the support of $\hat\psi$, so we examine the Hessian of $\Phi$.  Since $|a| \gg \jp\xi$ for all $\xi \in \supp \hat \psi$, $t^{-1}H_\Phi(\xi) \geq c_\psi (\mu+2|a|^2)$ (i.e. $t^{-1}H_\Phi-c_\psi(\mu+2|a|^2)$ is a positive definite matrix).  Thus by stationary phase
\begin{equation} \label{E:stuff phi t decay}
|D_\mu^{\frac d{d+2}}S_\mu(t) e^{i(\cdot)a}\psi(x)| \leq C_\psi (\mu+2|a|^2)^{\frac d{2(d+2)}}\bigl(\tfrac 1{1+(\mu+2|a|^2)|t|}\bigr)^{-\frac d2}, 
\end{equation}
whenever $(\mu+2|a|^2)|t| \gtrsim |y|$.  

Combining \eqref{E:stuff phi lesssim 1}, \eqref{E:stuff phi x decay}, and \eqref{E:stuff phi t decay}, and recalling the definition of $y$, \eqref{E:v 0} holds with 
$$
v(s,y) = C_\psi \bigl[ \bigl(\tfrac1{1+|y|}\bigr)^{\frac d2}\chi_{|y| \gg |s|} + \bigl(\tfrac1{1+|s|}\bigr)^{\frac d2}\chi_{|y| \lesssim |s|}\bigr],
$$
and it is easy to check that $v \in L^{\frac{2(d+2)}d}_{s,y}$.  This completes the proof of Lemma~\ref{L:ptwise ub}.
\end{proof}

Now we return to estimating the quantity in \eqref{E:ST ortho}, where we recall that may assume that $\phi^k,\phi^j$ are Schwartz functions with compact frequency supports that do not contain zero.  We will use two families of $L^{\frac{2(d+2)}d}_{t,x}$ isometries:
\begin{align*}
G_n^j u(t,x) &= (h_n^j)^{-\frac{d(d+4)}{2(d+2)}} u(\tfrac{t-t_n^j}{(h_n^j)^4},\tfrac{x-x_n^j}{h_n^j}),\\
L_{a,\mu}u(t,x) &= 
\begin{cases} 
(1+\mu)^{\frac d{2(d+2)}}u((1+\mu)t,x), \quad &\text{if $a=0$}\\
(2|a|^2+\mu)^{\frac d{2(d+2)}}u((2|a|^2+\mu)t, x+(4|a|^2+2\mu)at), &\text{if $a \neq 0$.}
\end{cases}
\end{align*}

If we consider $a_n^j = h_n^j \xi_n^j$, after passing to a subsequence, either $a_n^j =0$ for all $n$, or $|a_n^j| \gg \max\{\jp{\xi} : \xi \in \supp \hat\phi^j\}$ for all $n$.  In either case, we can apply Lemma~\ref{L:ptwise ub} once we move the translation/scaling isometries across the differential operators:  
\begin{align*}
|D_\mu^{\frac d{d+2}} S_\mu(t-t_n^j)g_n^j[e^{i(\cdot)h_n^j\xi_n^j}\phi^j](x)| &= |G_n^j D_{\mu_n^j}^{\frac{d}{d+2}}S_{\mu_n^j}(t)[e^{i(\cdot)a_n^j}\phi^j](x)|\\
&\leq G_n^j L_{a_n^j,\mu_n^j} v^j(t,x),
\end{align*}
and similarly,
$$
|D_\mu^{\frac d{d+2}}S_\mu(t-t_n^k)g_n^k[e^{i(\cdot)h_n^k\xi_n^k}\phi^k](x)| \leq G_n^k L_{a_n^k,\mu_n^k}v^k(t,x),
$$
where $v^j \in L^{\frac{2(d+2)}d}_{t,x}$, $a_n^j = h_n^j\xi_n^j$, $\mu_n^j = (h_n^j)^2\mu$, and similarly with $j$ replaced by $k$.  

The proof of the proposition will thus be complete once we prove the following.

\begin{lemma} \label{L:vj vk}
If $v^j,v^k$ are any $L^{\frac{2(d+2)}d}_{t,x}$ functions and the orthogonality condition \eqref{E:ortho} holds, then 
\begin{equation} \label{E:vj vk}
\lim_{n \to \infty} \|[G_n^jL_{a_n^j,\mu_n^j}v^j][G_n^kL_{a_n^k,\mu_n^k}v^k]\|_{L^{\frac{d+2}d}_{t,x}} = 0.
\end{equation}
\end{lemma}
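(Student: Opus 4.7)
First I will reduce to the case where $v^j$ and $v^k$ are bounded and compactly supported. The maps $v\mapsto G_n^jL_{a_n^j,\mu_n^j}v$ are $L^p_{t,x}$-isometries with $p=\tfrac{2(d+2)}d$ whose norms are independent of the parameters, so H\"older's inequality gives $\|T_n^jv^j\cdot T_n^kv^k\|_{L^{p/2}}\leq\|v^j\|_p\|v^k\|_p$ and this bound extends by density. I will also use Theorem~\ref{thm:linear-profile} to pass to a subsequence so that, whenever $h_n^j/h_n^k+h_n^k/h_n^j$ or $|h_n^j\xi_n^j-h_n^k\xi_n^k|$ fails to diverge, the parameters may be relabeled so that $h_n^j\equiv h_n^k$, respectively $h_n^j\xi_n^j\equiv h_n^k\xi_n^k$.

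Next, I work in explicit coordinates: $G_n^jL_{a_n^j,\mu_n^j}v(t,x)=c_n^j\,v(\phi_n^j(t,x))$, where (in the generic case $\xi_n^j\neq 0$)
\[
\phi_n^j(t,x)=\Bigl(\tfrac{(t-t_n^j)\omega_n^j}{(h_n^j)^2},\,\tfrac{x-x_n^j+2(t-t_n^j)\omega_n^j\xi_n^j}{h_n^j}\Bigr),\qquad \omega_n^j:=2|\xi_n^j|^2+\mu,
\]
with $c_n^j=(h_n^j)^{-d/2}(\omega_n^j)^{1/p}$ (with minor modifications when $\xi_n^j\equiv 0$, where $L_{0,\mu_n^j}$ has a different time-scaling factor $1+(h_n^j)^2\mu$). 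A change of variables then yields
\[
\|T_n^jv^j\cdot T_n^kv^k\|_{L^{p/2}}^{p/2}=\Bigl(\tfrac{|c_n^k|}{|c_n^j|}\Bigr)^{p/2}\int|v^j(u)|^{p/2}|v^k(\Phi_n(u))|^{p/2}\,du,
\]
where $\Phi_n:=\phi_n^k\circ(\phi_n^j)^{-1}$ is an affine map with $|\det D\Phi_n|=(|c_n^k|/|c_n^j|)^p$ and parameters $(\alpha_n,\beta_n,\gamma_n,\delta_n,\epsilon_n)$ which I will compute and check to correspond (up to bounded factors) to the five summands appearing in \eqref{E:ortho}.

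With $v^j,v^k$ bounded by $M$ and supported in a ball of radius $R$, the integrand is pointwise $\leq M^p\chi_{E_n}$, where $E_n:=\operatorname{supp}(v^j)\cap\Phi_n^{-1}(\operatorname{supp}(v^k))$. The crude bound $|E_n|\leq\min\bigl(|\operatorname{supp}(v^j)|,|\det D\Phi_n|^{-1}|\operatorname{supp}(v^k)|\bigr)$, combined with the prefactor $|\det D\Phi_n|^{1/2}$, yields a total bound $\lesssim \min(|\det D\Phi_n|^{1/2},|\det D\Phi_n|^{-1/2})$, which vanishes whenever $|\det D\Phi_n|$ degenerates. Refined ``strip-thickness'' estimates using the widths $2R/|\beta_n|$ and $2R/|\delta_n|$ handle the intermediate cases where only one of $|\beta_n|,|\delta_n|$ individually diverges to $0$ or $\infty$. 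In the remaining case, $|\det D\Phi_n|$, $|\beta_n|$ and $|\delta_n|$ are bounded above and below; the Step-1 normalization then forces $h_n^j\equiv h_n^k$ and $\xi_n^j\equiv \xi_n^k$, hence $\beta_n\equiv\delta_n\equiv 1$ and $\epsilon_n\equiv 0$, so $\Phi_n$ reduces to a pure translation by $(\alpha_n,\gamma_n)$; divergence of at least one of the remaining three summands of \eqref{E:ortho} then makes $|\alpha_n|\to\infty$ or $|\gamma_n|\to\infty$, forcing $E_n=\emptyset$ for all large $n$.

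The principal obstacle is matching the summands of \eqref{E:ortho} to the parameters of $\Phi_n$: one must carefully track the cancellation inside $\gamma_n$ between the spatial shift $(x_n^j-x_n^k)/h_n^j$ and the momentum-weighted temporal shift $2(t_n^j-t_n^k)\omega_n^j\xi_n^j/h_n^j$, and verify (using the standing dichotomy $\xi_n^j\equiv 0$ or $|h_n^j\xi_n^j|\to\infty$) that the third summand of \eqref{E:ortho} diverging forces the fourth to diverge in the non-degenerate case, while in the $\xi_n^j\equiv 0$ degenerate case the altered formula for $\phi_n^j$ arranges that the third summand alone drives $|\alpha_n|\to\infty$. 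Once $E_n$ is eventually empty, the integral vanishes and the lemma follows.
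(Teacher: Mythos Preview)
Your overall strategy---density reduction, explicit change of variables, and strip-thickness estimates on the overlap set---is exactly the paper's approach.  However, there is a genuine gap in the ``remaining case.''  You claim that once $\beta_n$ and $\delta_n$ are bounded above and below, the normalization from Theorem~\ref{thm:linear-profile} forces $\xi_n^j\equiv\xi_n^k$.  This is false: with $h_n^j\equiv h_n^k$ (from $\delta_n$ bounded), one has $\beta_n=\omega_n^k/\omega_n^j=(2|\xi_n^k|^2+\mu)/(2|\xi_n^j|^2+\mu)$, which compares only the \emph{magnitudes} of $\xi_n^j,\xi_n^k$, not the difference $a_n^j-a_n^k$.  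A concrete counterexample is $h_n\equiv 1$, $\xi_n^j=(n,0,\dots,0)$, $\xi_n^k=(0,n,0,\dots,0)$: then $\beta_n\equiv 1$ and $\delta_n\equiv 1$, yet $|a_n^j-a_n^k|=n\sqrt2\to\infty$, so the normalization does \emph{not} identify the frequencies.  (A similar failure occurs in the mixed case $\xi_n^j\equiv 0$, $h_n\to\infty$, $|a_n^k|\to\infty$ with $|a_n^k|/h_n\to 0$.)

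What you are missing is the shear parameter $\epsilon_n$: in these cases the spatial component of $\Phi_n$ contains a term $\epsilon_n s$ with $|\epsilon_n|\to\infty$, and a third strip-thickness argument (slicing in $s$ for fixed $y$) then gives $|E_n|\lesssim R^{d}/|\epsilon_n|\to 0$.  But verifying that $|\epsilon_n|\to\infty$ whenever $|a_n^j-a_n^k|\to\infty$ is not automatic.  In your coordinates $\epsilon_n=\tfrac{2h_n}{\omega_n^j}(\omega_n^k\xi_n^k-\omega_n^j\xi_n^j)$, and showing $|\epsilon_n|\gtrsim|a_n^j-a_n^k|$ requires the nontrivial algebraic inequality the paper proves for its quantity $b_n^{jk}$ (namely $b_n^{jk}\cdot\tfrac{a_n^k-a_n^j}{|a_n^k-a_n^j|}\geq\tfrac12|a_n^k-a_n^j|$, obtained by expanding and applying an AM--GM type bound).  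Without this step your case analysis is incomplete.
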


\begin{proof}[Proof of Lemma~\ref{L:vj vk}]
Since the $G_n^{(\cdot)}$ and $L_{a,\mu}$ operators are uniformly bounded on $L^{\frac{2(d+2)}d}_{t,x}$, it suffices to prove this for $v^j$ and $v^k$ lying in some dense subclass of $L^{\frac{2(d+2)}d}_{t,x}$.  We assume henceforth that they are compactly supported Schwartz functions; say $\supp v^j,\supp v^k \subseteq \{(t,x):\,|(t,x)| \leq R\}$.  

Passing to a subsequence, we may assume that each of the summands in \eqref{E:ortho} has a limit.  (This passage is harmless because to prove \eqref{E:vj vk}, it suffices to prove that every subsequence has a further subsequence along which the limit is zero.)  

We first consider the case when $\frac{h_n^j}{h_n^k} \to \infty$.  Using the $L^{\frac{2(d+2)}d}_{t,x}$ isometry properties and H\"older's inequality,
\begin{equation}\label{E:hnjhnk 2(d+2)/d}
\begin{aligned}
&\|[G_n^jL_{a_n^j,\mu_n^j}v^j][G_n^kL_{a_n^k,\mu_n^k}v^k]\|_{L^{\frac{d+2}d}_{t,x}}
= \|[v^j][L_{a_n^j,\mu_n^j}^{-1}(G_n^j)^{-1}G_n^k L_{a_n^k,\mu_n^k}v^k]\|_{L^{\frac{d+2}d}_{t,x}}\\
&\qquad\qquad\leq \|v^j\|_{L^{\frac{2(d+2)}d}_{t,x}(\supp(L_{a_n^j,\mu_n^j}^{-1}(G_n^j)^{-1}G_n^k L_{a_n^k,\mu_n^k}v^k))} \|L_{a_n^j,\mu_n^j}^{-1}(G_n^j)^{-1}G_n^k L_{a_n^k,\mu_n^k}v^k\|_{L^{\frac{2(d+2)}d}_{t,x}}.\\
&\qquad\qquad = \|v^j\|_{L^{\frac{2(d+2)}d}_{t,x}(\supp(L_{a_n^j,\mu_n^j}^{-1}(G_n^j)^{-1}G_n^k L_{a_n^k,\mu_n^k}v^k))} \|v^k\|_{L^{\frac{2(d+2)}d}_{t,x}}.
\end{aligned}
\end{equation}

The exact computation of $L_{a_n^j,\mu_n^j}^{-1}(G_n^j)^{-1}G_n^k L_{a_n^k,\mu_n^k}v^k$ is elementary but tedious; however it is painless to verify that 
$$
L_{a_n^j,\mu_n^j}^{-1}(G_n^j)^{-1}G_n^k L_{a_n^k,\mu_n^k}v^k(t,x) = c_n^{jk} v^k(d_n^{jk} t-s_n^{jk}, \tfrac{h_n^j}{h_n^k}x-y_n^{jk}(t)),
$$
for positive constants $c_n^{jk},d_n^{jk}$, real numbers $s_n^{jk}$, and functions $y_n^{jk}:\R \to \R^d$.  Fix $t$.  If 
$$
(t,x) \in \supp(L_{a_n^j,\mu_n^j}^{-1}(G_n^j)^{-1}G_n^k L_{a_n^k,\mu_n^k}v^k),
$$
 then $|x-\tfrac{h_n^k}{h_n^j}y_n^{jk}(t)| \leq \tfrac{h_n^k}{h_n^j}R$.  By H\"older,
$$
\|v^j(t,\cdot)\|_{L^{\frac{2(d+2)}d}_x(\{|x-y_n^j(t)| \leq \frac{h_n^k}{h_n^j}R\})}^{\frac{2(d+2)}d}
\lesssim 
\bigl(\tfrac{h_n^k}{h_n^j}R\bigr)^d \|v^j\|_{L^\infty_{t,x}}.
$$
Integrating the above estimate with respect to $t$ and recalling that $\tfrac{h_n^j}{h_n^k}\to \infty$,
$$
\|v^j\|_{L^{\frac{2(d+2)}d}_{t,x}(\supp(L_{a_n^j,\mu_n^j}^{-1}(G_n^j)^{-1}G_n^k L_{a_n^k,\mu_n^k}v^k))}^{\frac{2(d+2)}d} \lesssim R(\tfrac{h_n^k}{h_n^j}R\bigr)^d \to 0.
$$
By \eqref{E:hnjhnk 2(d+2)/d}, this implies \eqref{E:vj vk}.  By a similar argument, \eqref{E:vj vk} also holds if $\tfrac{h_n^k}{h_n^j} \to \infty$.  

Henceforth, we may assume that $h_n^j \equiv h_n^k\equiv h_n$, so $\mu_n^j \equiv \mu_n^k \equiv \mu_n$ as well.  Using a change of variables, we may now remove the dilations from the $G_n^{(\cdot)}$:  
\begin{equation} \label{E:remove h}
\|[G_n^jL_{a_n^j,\mu_n^j}v^j][G_n^kL_{a_n^k,\mu_n^k}v^k]\|_{L^{\frac{d+2}d}_{t,x}} = \|[L_{a_n^j,\mu_n^j}v^j][G_n^{jk}L_{a_n^k,\mu_n^k}v^k]\|_{L^{\frac{d+2}d}_{t,x}},
\end{equation}
where $G_n^{jk}v(t,x) = v(t-\tfrac{t_n^k-t_n^j}{h_n^4},x-\tfrac{x_n^k-x_n^j}{h_n})$.

We now break into three cases:  $a_n^j \equiv a_n^k \equiv 0$; $a_n^j \equiv 0$ and $|a_n^k|\to \infty$; and $|a_n^j|,|a_n^k| \to \infty$.  

We deal with the easiest of the three cases, $a_n^j \equiv a_n^k \equiv 0$, first.  In this case,
\begin{align*}
L_{a_n^j,\mu_n}v^j(t,x) &= (1+\mu_n)^{\frac d{2(d+2)}}v^j((1+\mu_n)t,x),\\
G_n^{jk}L_{a_n^k,\mu_n}v^k(t,x) &= (1+\mu_n)^{\frac d{2(d+2)}}v^k((1+\mu_n)[t-\tfrac{t_n^k-t_n^j}{h_n^4}],x-\tfrac{x_n^k-x_n^j}{h_n}).
\end{align*}
By our assumptions on the various parameters, including the assumption that the orthogonality condition \eqref{E:ortho} holds, either $\tfrac{t_n^k-t_n^j}{h_n^4} \to \infty$ or $\tfrac{x_n^k-x_n^j}{h_n} \to \infty$.  In either case, $\supp(L_{a_n^j,\mu_n} v^j)\cap \supp(G_n^{jk}L_{a_n^k,\mu_n}v^k)$ is empty for sufficiently large $n$, so the right hand side of \eqref{E:remove h} is eventually zero.  By the identity \eqref{E:remove h}, this establishes \eqref{E:vj vk}.  
 
We turn now to the case when $a_n^j \equiv 0$, $|a_n^k| \to \infty$.  (By symmetry, this argument also covers the case when the roles of $j$ and $k$ are reversed.)  Arguing similarly to \eqref{E:hnjhnk 2(d+2)/d}, 
\begin{equation} \label{E:parallelogram}
\|[L_{a_n^j,\mu_n} v^j][G_n^{jk}L_{a_n^k,\mu_n}v^k]\|_{L^{\frac{d+2}d}_{t,x}} \lesssim \|v_j\|_{L^{\frac{d+2}d}_{t,x}(\supp L_{a_n^j,\mu_n}^{-1}G_n^{jk}L_{a_n^k,\mu_n} v^k)}.
\end{equation}
We compute:
\begin{align*}
L_{a_n^j,\mu_n}^{-1}G_n^{jk}L_{a_n^k,\mu_n}v^k(t,x) &= \bigl(\tfrac{2|a_n^k|^2+\mu_n}{1+\mu_n}\bigr)^{\frac d{2(d+2)}} v^k(\tfrac{2|a_n^k|^2+\mu_n}{1+\mu_n}(t-\tfrac{(t_n^k-t_n^j)(1+\mu_n)}{h_n^4}), \\ 
&\qquad\qquad  x-\tfrac{x_n^k-x_n^j}{h_n}+\tfrac{4|a_n^k|^2+2\mu_n}{1+\mu_n}a_n^k(t-\tfrac{(t_n^k-t_n^j)(1+\mu_n)}{h_n^4}))\\
&= \bigl(\tfrac{2|a_n^k|^2+\mu_n}{1+\mu_n}\bigr)^{\frac d{2(d+2)}} v^k(\tfrac{2|a_n^k|^2+\mu_n}{1+\mu_n}(t-s_n^{jk}), x+\tfrac{4|a_n^k|^2+2\mu_n}{1+\mu_n} a_n^k t - y_n^{jk}),
\end{align*}
where 
\begin{gather*}
s_n^{jk} = \tfrac{(t_n^k-t_n^j)(1+\mu_n)}{h_n^4}\\
y_n^{jk} = \tfrac{x_n^k-x_n^j}{h_n} + \tfrac{4|a_n^k|^2+2\mu_n}{1+\mu_n}a_n^k s_n^{jk}.
\end{gather*}

Fix $x$.  If $(t,x) \in (\supp v^j) \cap (\supp L_{a_n^j,\mu_n}^{-1}G_n^{jk}L_{a_n^k\mu_n} v^k)$, it satisfies $|x| \leq R$ and $|x+\tfrac{4|a_n^k|^2+2\mu_n}{1+\mu_n} a_n^k t - y_n^{jk}| \leq R$.  Therefore 
$$
|\tfrac{4|a_n^k|^2+2\mu_n}{1+\mu_n} a_n^k t - y_n^{jk}| \leq 2R,
$$
so recalling that $|a_n^k| \gtrsim 1$ for all $n$,
$$
|t-r_n^{jk}| \leq \tfrac{2R(1+\mu_n)}{(4|a_n^k|^2+2\mu_n)|a_n^k|} \lesssim \tfrac{R}{|a_n^k|},
$$
where 
$$
r_n^{jk} = \tfrac{(1+\mu_n)y_n^{jk}\cdot a_n^k}{(4|a_n^k|^2+2\mu_n)|a_n^k|^2}.
$$
Integrating in $t$,
$$
\|v^j(\cdot,x)\|_{L^{\frac{2(d+2)}d}_t(\{|t+r_n^{jk}| \lesssim \frac{R}{|a_n^k|}\})}^{\frac{2(d+2)}d} \lesssim \bigl(\|v^j\|_{L^\infty_{t,x}}\bigr) \tfrac{R}{|a_n^k|}.
$$
Integrating this in $x$,
$$
\|v^j\|_{L^{\frac{d+2}d}_{t,x}(\supp L_{a_n^j,\mu_n}^{-1}G_n^{jk}L_{a_n^k\mu_n} v^k)}^{\frac{2(d+2)}d} \lesssim \|v^j\|_{L^\infty_{t,x}} R^d \tfrac{R}{|a_n^k|} \to 0.
$$
This implies \eqref{E:vj vk}, and completes the case when $a_n^j \equiv 0$, $|a_n^k| \to \infty$.  

Finally we turn to the case when $|a_n^j|,|a_n^k| \to \infty$.  We compute
$$
L_{a_n^j,\mu_n}^{-1}G_n^{jk}L_{a_n^k,\mu_n} v^k(t,x) = (c_n^{jk})^{\frac d{2(d+2)}}v^k(c_n^{jk}(t-s_n^{jk}),x-y_n^{jk}-b_n^{jk}t),
$$
where
\begin{align*}
c_n^{jk} &= \tfrac{2|a_n^k|^2+\mu_n}{2|a_n^j|^2+\mu_n},\\
s_n^{jk} &= \tfrac{(t_n^k-t_n^j)(2|a_n^j|^2+\mu_n)}{h_n^4},\\
y_n^{jk} &= \tfrac{x_n^k-x_n^j}{h_n}+a_n^k\tfrac{(4|a_n^k|^2+2\mu_n)(t_n^k-t_n^j)}{h_n^4},\\
b_n^{jk} &= \tfrac{(2|a_n^j|^2+\mu_n)a_n^j-(2|a_n^k|^2+\mu_n)a_n^k}{2|a_n^j|^2+\mu_n}.
\end{align*}

Suppose that $|a_n^j-a_n^k| \to \infty$.  We claim that $|b_n^{jk}| \to \infty$.  Indeed, 
\begin{align*}
|b_n^{jk}| &\geq b_n^{jk} \cdot \frac{a_n^k-a_n^j}{|a_n^j-a_n^k|} \\&
= \frac{5[a_n^j\cdot(a_n^k-a_n^j)]^2+6|a_n^k-a_n^j|^2a_n^j \cdot(a_n^k-a_n^j) + 2|a_n^k-a_n^j|^4 + |a_n^k-a_n^j|^2(|a_n^j|^2+\mu_n)}{(2|a_n^j|^2+\mu_n)|a_n^k-a_n^j|} \\
&\geq \frac{|a_n^k-a_n^j|^2(|a_n^j|^2+\mu_n)}{(2|a_n^j|^2+\mu_n)|a_n^k-a_n^j|} \geq \tfrac12|a_n^k-a_n^j|,
\end{align*}
where for the second inequality, we used Cauchy--Schwartz and the elementary inequality
$$
5(ab)^2+2b^4 \geq \tfrac92(ab)^2+2b^4 \geq 6|ab^3|,
$$
for all $a,b \in \R$.  
Since $|b_n^{jk}| \to \infty$, arguing exactly as we did in the case $a_n^j \equiv 0$, $|a_n^k| \to \infty$, we can establish \eqref{E:vj vk}.  

Henceforth, we may assume that $a_n^j \equiv a_n^k$.  Thus $c_n^{jk} \equiv 1$ and $b_n^{jk} \equiv 0$.  If $|s_n^{jk}| \to \infty$ or $|y_n^{jk}| \to \infty$, the (compact) supports of $v^j$ and $L_{a_n^j\mu_n}^{-1}G_n^{jk}L_{a_n^k\mu_n} v^k$ are disjoint for sufficiently large $n$, so \eqref{E:vj vk} holds.  Since $|s_n^{jk}| \geq \tfrac{|t_n^j-t_n^k|}{h_n^4}$, this completes the proof in the case $|a_n^j|,|a_n^k| \to \infty$.
\end{proof}
Finally, the proposition is proved.  
\end{proof}

%%%%%%%%%%%%%%%%%%%%%%%%%%%%%%%%%%%%%%
%%%%%%%%%%%%%%%%%%%%%%%%%%%%%%%%%%%%%%
%%%%%%%%%%%%%%%%%%%%%%%%%%%%%%%%%%%%%%

\section{Application:  Dichotomy result on the existence of extremizers} \label{S:extreme}

%%%%%%%%%%%%%%%%%%%%%%%%%%%%%%%%%%%%%%
%%%%%%%%%%%%%%%%%%%%%%%%%%%%%%%%%%%%%%
%%%%%%%%%%%%%%%%%%%%%%%%%%%%%%%%%%%%%%

As an application of the profile decomposition in Theorem \ref{thm:linear-profile}, we establish lower bounds for the operator norms and a dichotomy result on existence of extremizers. Similar results have previously appeared in Jiang, Pausader and Shao \cite{Jiang-Pausader-Shao:2010:4th-NLS}. We begin by defining 
\begin{equation}\label{E:Amu}
A_\mu := \sup_{\substack{\phi\in L^2_x, \\ \phi\neq 0 }}\dfrac {\|D_\mu^{\frac{d}{d+2}} S_\mu(t) \phi\|_{L^{\frac{2(d+2)}d}_{t,x}(\mathbb{R}\times \mathbb{R}^d)}}{\|\phi\|_{L^2_x}}, \qquad \mu \geq 0,
\end{equation}
and
\begin{equation}\label{E:B}
B := \sup_{\substack{\psi\in L^2_x, \\ \psi\neq 0 }}\dfrac {\|e^{it\Delta}\psi \|_{L^{\frac{d}{d+2}}_{t,x}(\mathbb{R}\times \mathbb{R}^d)}}{\|\psi\|_{L^2_x}}.
\end{equation}
These are finite by the Strichartz inequalities for the fourth order Schr\"odinger and Schr\"odinger equations.

We say that a function $\phi$ is an extremizer for $A_\mu$ (resp.\ $B$) if $\|\phi\|_{L^2_x} \neq 0$ and $\phi$ maximizes the ratio in \eqref{E:Amu} (resp.\ \eqref{E:B}).  A sequence $\{\phi_n\}$ is an extremizing sequence for $A_\mu$ if 
$$
A_\mu = \lim_{n \to \infty} \frac{\|D_\mu^{\frac{d}{d+2}} S_\mu(t)\phi_n\|_{L^{\frac{2(d+2)}d}_{t,x}}}{\|\phi_n\|_{L^2_x}};
$$
$\{\phi_n\}$ is $L^2_x$-normalized if $\|\phi_n\|_{L^2_x} = 1$ for all $n$.  

Extremizers are known to exist for $B$ (\cite{Foschi:2007:maxi-strichartz-2d, Hundertmark-Zharnitsky:2006:maximizers-Strichartz-low-dimensions} for $d=1,2$, \cite{Shao:2009} for $d \geq 3$).  In dimensions 1 and 2, it is known in addition that the extremizers are only Gaussian functions, modulo symmetries of the Schr\"odinger equation \cite{Foschi:2007:maxi-strichartz-2d, Hundertmark-Zharnitsky:2006:maximizers-Strichartz-low-dimensions}.

\begin{theorem}\label{T:dichotomy 0}
The operator norms $A_0$ and $B$ satisfy:  
\begin{equation} \label{E:A0 B}
A_0 \geq 3^{-\frac1{2(d+2)}}2^{-\frac{d}{2(d+2)}}B.
\end{equation}
If the inequality is strict in \eqref{E:A0 B}, then extremizers exist for $A_0$.  If extremizers do not exist and $\{\phi_n\}$ is an $L^2_x$-normalized extremizing sequence for $A_0$, there exist a sequence of parameters $(h_n,\xi_n,x_n,t_n)$ with $|h_n\xi_n| \to \infty$ and an extremizer $\psi$ for $B$ such that after passing to a subsequence,
\begin{equation} \label{E:to B extreme 0}
\lim_{n \to \infty} \|\phi_n - S_0(t_n)g_n(e^{i(\cdot)h_n\xi_n}\psi \circ \ell_{h_n\xi_n}^{-1})\|_{L^2_x} = 0,
\end{equation}
where for $a \in \R^d$, $\ell_a$ denotes the transformation
\begin{equation} \label{E:ell a}
\ell_a(\xi) = \sqrt{6} \, \rm{proj}_a(\xi) + \sqrt{2}(\xi-\rm{proj}_a(\xi)),
\end{equation}
where $\rm{proj}_a(\xi) := (\xi\cdot \frac {a}{|a|})\frac {a}{|a|}$. Conversely, if equality holds in \eqref{E:A0 B}, any sequence $\{\phi_n\}$ satisfying \eqref{E:to B extreme 0} for a sequence of parameters $(h_n,\xi_n,x_n,t_n)_{n \geq 1}$ with $|h_n\xi_n| \to \infty$ and an extremizer $\psi$ for $B$ is an extremizing sequence for $A_0$.
\end{theorem}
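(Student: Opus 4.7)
The heart of the argument is an asymptotic identity relating the fourth-order Strichartz quantity of a high-frequency-shifted function to a Schr\"odinger Strichartz quantity. For a Schwartz $\phi$ whose Fourier transform is compactly supported away from the origin, and $a_n \in \R^d$ with $|a_n| \to \infty$, the Taylor expansion
$$
|a_n + \eta|^4 = |a_n|^4 + 4|a_n|^2 a_n \cdot \eta + |a_n|^2 |\ell_{a_n}(\eta)|^2 + \mathcal{O}(|a_n||\eta|^3 + |\eta|^4)
$$
(in which the quadratic form records the Hessian of $|\xi|^4$ at $a_n$, with eigenvalues $12|a_n|^2$ along $a_n$ and $4|a_n|^2$ orthogonally) suggests the rescalings $s = |a_n|^2 t$, $y = x + 4t|a_n|^2 a_n$, $\zeta = \ell_{a_n}(\eta)$.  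Under these substitutions, the cubic/quartic phase remainder contributes at most $s \cdot \mathcal{O}(|\eta|^3/|a_n| + |\eta|^4/|a_n|^2)$ and vanishes, while the prefactor $|a_n|^{d/(d+2)}$ from $D_0^{d/(d+2)}$ combined with the Jacobians $|\det \ell_{a_n}| = \sqrt{3 \cdot 2^d}$, $dt = |a_n|^{-2}\,ds$, $dy = |\det \ell_{a_n}|\,dz$ yields
$$
\lim_{n\to\infty} \frac{\|D_0^{d/(d+2)} S_0(t)[e^{i(\cdot)a_n}\phi]\|_{L^{2(d+2)/d}_{t,x}}}{\|\phi\|_{L^2_x}} = C \cdot \frac{\|e^{is\Delta}(\phi \circ \ell_{a_n})\|_{L^{2(d+2)/d}_{s,z}}}{\|\phi \circ \ell_{a_n}\|_{L^2_z}},
$$
with $C := |\det \ell_{a_n}|^{-1/(d+2)} = 3^{-1/(2(d+2))} 2^{-d/(2(d+2))}$.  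The lower bound \eqref{E:A0 B} follows by choosing $\phi$ so that $\phi \circ \ell_{a_n}$ is a $B$-extremizer.  The converse direction (the final sentence of the theorem) follows from the same identity, since $S_0(t_n)$, the scaling/translation $g_n$, and the modulation by $e^{i(\cdot) h_n \xi_n}$ preserve both $\|\cdot\|_{L^2_x}$ and the fourth-order Strichartz quantity (the latter by scale invariance at $\mu=0$).

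For the dichotomy, I would apply Theorem~\ref{thm:linear-profile} to an $L^2_x$-normalized extremizing sequence $(\phi_n)$, obtaining profiles $\phi^j$ with parameters $(h_n^j, \xi_n^j, x_n^j, t_n^j)$.  Set $\alpha_j := \|\phi^j\|_{L^2}^2$, $p := \tfrac{2(d+2)}d$, and
$$
L_j := \lim_n \|D_0^{d/(d+2)} S_0(t-t_n^j) g_n^j[e^{i(\cdot) h_n^j \xi_n^j}\phi^j]\|^p_{L^{2(d+2)/d}_{t,x}}.
$$
By \eqref{E:L2 decoup}, $\sum_j \alpha_j \leq 1$; by Proposition~\ref{P:ST ortho}, $A_0^p = \sum_j L_j$.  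For a profile with $\xi_n^j \equiv 0$ (case 1), scale invariance and the definition of $A_0$ give $L_j \leq A_0^p \alpha_j^{p/2}$; for $|h_n^j \xi_n^j| \to \infty$ (case 2), the asymptotic identity and the definition of $B$ give $L_j \leq (CB)^p \alpha_j^{p/2} \leq A_0^p \alpha_j^{p/2}$.  Since $p/2 - 1 = 2/d > 0$ and $\sum_j \alpha_j \leq 1$, one has $\sum_j \alpha_j^{p/2} \leq (\max_j \alpha_j)^{2/d} \sum_j \alpha_j \leq 1$, with equality only when a single profile $\phi^{j_0}$ carries all the mass ($\alpha_{j_0} = 1$, all other $\alpha_j = 0$); combined with \eqref{E:L2 decoup}, this also forces $\lim_l \limsup_n \|w_n^l\|_{L^2_x} = 0$.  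The saturated bound $L_{j_0} = A_0^p$ then pinpoints the profile: in case 1 it is an $A_0$-extremizer; in case 2 it forces $A_0 = CB$, and the sequence $(\phi^{j_0} \circ \ell_{h_n^{j_0}\xi_n^{j_0}})$ is $B$-extremizing.

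The conclusions of the theorem now follow.  If \eqref{E:A0 B} is strict then case 2 is impossible, so the profile is of case 1 and an $A_0$-extremizer exists.  If extremizers do not exist then case 1 is impossible, so the profile comes from case 2 and $\phi_n = S_0(-t_n^{j_0}) g_n^{j_0}[e^{i(\cdot) h_n^{j_0}\xi_n^{j_0}}\phi^{j_0}] + o_{L^2}(1)$; passing to a further subsequence along which $h_n^{j_0}\xi_n^{j_0}/|h_n^{j_0}\xi_n^{j_0}|$ converges to some $\hat{a} \in S^{d-1}$, the $L^2$-limit $\psi := \phi^{j_0} \circ \ell_{\hat{a}}$ is a $B$-extremizer (by $L^2$-continuity of the Strichartz norm), so $\phi^{j_0} = \psi \circ \ell_{\hat{a}}^{-1}$, and replacing $\ell_{\hat{a}}^{-1}$ by $\ell_{h_n^{j_0}\xi_n^{j_0}}^{-1}$ introduces only $o_{L^2}(1)$ (by continuity of $\ell_a^{-1}$ in the direction of $a$, verified for smooth $\psi$ and extended by density); a sign relabelling $t_n = -t_n^{j_0}$ then yields \eqref{E:to B extreme 0}.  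The main obstacle will be the careful derivation of the asymptotic identity: verifying that the cubic/quartic phase remainders are uniformly controlled on the rescaled region, and that the Jacobian constants combine exactly to $(3 \cdot 2^d)^{-1/(2(d+2))}$.
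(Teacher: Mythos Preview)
Your approach is correct and essentially identical to the paper's. The asymptotic identity you sketch via Taylor expansion is precisely the paper's Lemma~\ref{L:limit S0}, your Jacobian bookkeeping yielding $C=|\det\ell_a|^{-1/(d+2)}=(3\cdot 2^d)^{-1/(2(d+2))}$ is correct, and your profile-decomposition argument (single profile via strict convexity of $t\mapsto t^{p/2}$, then the case split $\xi_n\equiv 0$ versus $|h_n\xi_n|\to\infty$, then passage to a subsequence along which $h_n\xi_n/|h_n\xi_n|\to\hat a$ and $\psi:=\phi^{j_0}\circ\ell_{\hat a}$) matches the paper line for line.

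The one place where your sketch understates the work is the asymptotic identity itself, and you flag this. Pointwise vanishing of the cubic/quartic phase remainder is not enough for convergence in $L^{2(d+2)/d}_{t,x}$; the paper closes this by producing, via stationary phase, an $n$-independent dominating function of the form $(1+|t|+|x|)^{-d/2}$ (after the rescaling), and then invoking dominated convergence together with the density reduction to Schwartz $\phi$ with compact Fourier support away from the origin. Your phrase ``uniformly controlled on the rescaled region'' should be read as exactly this step.
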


If $\mu > 0$, then by scaling, $A_\mu=A_1$; scaling also gives a natural correspondence between extremizing sequences (and, if they exist, extremizers) for $A_\mu$ and those for $A_1$.  Thus we only state the dichotomy result in the case $\mu=1$.  

\begin{theorem}\label{T:dichotomy 1}
The operator norms satisfy $A_1 \geq \max\{A_0,B\}$, and if this inequality is strict, extremizers exist for $A_1$.  If extremizers do not exist and $\{\phi_n\}$ is an $L^2_x$-normalized extremizing sequence for $A_1$, then there exist a sequence of parameters $(h_n,\xi_n,x_n,t_n)$ and a function $\phi \in L^2_x$ such that after passing to a subsequence,
\begin{equation} \label{E:to B extreme 1}
\lim_{n \to \infty} \|\phi_n - S_1(t_n) g_n e^{i(\cdot)h_n\xi_n}\psi\|_{L^2_x} = 0.
\end{equation}
Moreover, in this case, one of the following occurs:  either $A_1=B$, $h_n \to \infty$, $|\xi_n| \to 0$, and $\psi$ is an extremizer for $B$, or $A_1=A_0$, $h_n \to 0$, $\xi_n \equiv 0$, and $\psi$ is an extremizer for $A_0$.  
\end{theorem}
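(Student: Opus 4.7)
The plan is to apply the linear profile decomposition (Theorem~\ref{thm:linear-profile}) to an $L^2_x$-normalized extremizing sequence $\{\phi_n\}$ for $A_1$, to combine it with the Strichartz orthogonality of Proposition~\ref{P:ST ortho} and the $L^2_x$ decoupling identity \eqref{E:L2 decoup} to force a single surviving profile, and then to read off the dichotomy from the asymptotic behavior of that profile's parameters.

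First I would verify $A_1\geq\max\{A_0,B\}$ by scaling. For $\phi\in L^2_x$, the $L^2$-isometric dilate $\phi_h(x)=h^{-d/2}\phi(x/h)$ satisfies $\|D_1^{d/(d+2)}S_1(t)\phi_h\|_{L^{2(d+2)/d}_{t,x}}=\|D_{h^2}^{d/(d+2)}S_{h^2}(t)\phi\|_{L^{2(d+2)/d}_{t,x}}$; letting $h\to 0$ gives $A_1\geq A_0$ (since $h^2\to 0$ and the norms depend continuously on $\mu$), while letting $h\to\infty$ (using $|\xi|^4+h^2|\xi|^2\sim h^2|\xi|^2$ on the fixed Fourier support of $\phi$) gives $A_1\geq B$ after a further $t\mapsto h^{-2}t$ rescaling. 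Next, for each profile $\phi^j$ I would show that after passing to further subsequences its parameters fall into one of three mutually exclusive regimes: a \emph{compact} regime in which $\xi_n^j\equiv 0$ and $h_n^j\to h_\infty^j\in(0,\infty)$, giving contribution $\leq A_1\|\phi^j\|_{L^2_x}$; a \emph{concentrating} regime in which $\xi_n^j\equiv 0$ and $h_n^j\to 0$, where rescaling by $h_n^j$ turns $S_1$ into $S_0$ in the limit on compactly Fourier-supported data and yields contribution $\leq A_0\|\phi^j\|_{L^2_x}$; and a \emph{Schr\"odinger} regime ($\xi_n^j\equiv 0$ with $h_n^j\to\infty$, or $|h_n^j\xi_n^j|\to\infty$), where a Taylor expansion of $|\xi|^4+|\xi|^2$ about $\xi_n^j$, removal of the resulting constant and linear phases by time and space translations, and an invertible linear change of variables symmetrizing the quadratic part $(2|\xi_n^j|^2+1)|\eta|^2+4(\xi_n^j\cdot\eta)^2$ reduce the propagator to $e^{it\Delta}$ up to $o(1)$ errors, yielding contribution $\leq B\|\phi^j\|_{L^2_x}$.

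Denoting these asymptotic constants by $C_j\in\{A_0,A_1,B\}$, Proposition~\ref{P:ST ortho} together with \eqref{E:Lp decoup} would give
\[
A_1^{\frac{2(d+2)}d}=\lim_{l\to\infty}\limsup_{n\to\infty}\sum_{j=1}^{l}C_j^{\frac{2(d+2)}d}\|\phi^j\|_{L^2_x}^{\frac{2(d+2)}d},
\]
and \eqref{E:L2 decoup} would give $\sum_j\|\phi^j\|_{L^2_x}^2\leq 1$. Since each $C_j\leq A_1$ and $\tfrac{2(d+2)}d>2$, the strict convexity of $t\mapsto t^{(d+2)/d}$ forces exactly one profile $\phi^{j_0}$ to survive with $\|\phi^{j_0}\|_{L^2_x}=1$, all other profiles trivial, $\|w_n^l\|_{L^2_x}\to 0$, and $A_1=C_{j_0}$. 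If $C_{j_0}=A_1$ the profile is an extremizer for $A_1$. If $C_{j_0}=A_0$, then $A_1=A_0$ and $\psi:=\phi^{j_0}$ is an $A_0$-extremizer with parameters $h_n\to 0$, $\xi_n\equiv 0$. If $C_{j_0}=B$, then $A_1=B$ and, after absorbing the anisotropic linear change of variables from the Schr\"odinger regime into a redefinition of $\psi$ so that the profile takes the canonical form with $h_n\to\infty$ and $|\xi_n|\to 0$, $\psi$ is a $B$-extremizer.

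The principal obstacle will be the Schr\"odinger regime in the second step: rigorously carrying out the Taylor expansion of $|\xi|^4+|\xi|^2$ about $\xi_n^j$, tracking the effect of modulations, translations, and an anisotropic linear rescaling (akin to $\ell_a$ in \eqref{E:ell a}) on the Strichartz functional, and verifying that the cubic and quartic remainders produce only $o(1)$ contributions to the $L^{2(d+2)/d}_{t,x}$ norm when $|h_n^j\xi_n^j|\to\infty$. Casting the resulting profile into the canonical form $h_n\to\infty$, $|\xi_n|\to 0$ required by the theorem must also be done compatibly with the $L^2$ convergence \eqref{E:to B extreme 1}, which requires some care in unwinding the anisotropic map.
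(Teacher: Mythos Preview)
Your outline follows the same strategy as the paper's: apply the profile decomposition, use orthogonality and convexity to isolate a single profile, then analyze its parameter regime. The paper packages the asymptotic behavior of profiles into Lemma~\ref{L:limit S1}, which is precisely the Taylor-expansion-and-anisotropic-rescaling work you flag as the principal obstacle.

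There is, however, a real gap in your handling of the Schr\"odinger regime. You assert that this regime contributes $\leq B\|\phi^j\|_{L^2_x}$ and that, once equality is forced, the anisotropic map can be ``absorbed'' into a redefinition of $\psi$ to reach the canonical form $h_n\to\infty$, $|\xi_n|\to 0$. That is not how the argument closes. The anisotropic map $\tilde\ell_n$ (or $\ell_{\xi_n}$) has nontrivial determinant when $\xi_n\not\to 0$, and the mismatch between its spacetime Jacobian and its effect on $\|\phi^j\|_{L^2_x}$ produces a \emph{strict} loss: the paper computes that if $|\xi_n|\to\infty$ the limiting Strichartz norm is at most $3^{-1/(2(d+2))}2^{-d/(2(d+2))}B<B$, and if $\xi_n\to\xi_0\neq 0$ it is at most $(|\xi_0|^2+1)^{d/(d+2)}(6|\xi_0|^2+1)^{-1/(d+2)}(2|\xi_0|^2+1)^{-(d-1)/(d+2)}B<B$. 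These sub-regimes are therefore \emph{excluded} by the equality $C_{j_0}=A_1\geq B$; they cannot be recast into canonical form by absorbing the map, because absorbing a non-unimodular linear map into $\psi$ would alter $\|\psi\|_{L^2_x}$ and break the extremizing property. Only when $|\xi_n|\to 0$ does $\tilde\ell_n$ tend to the identity and the loss vanish; in that case no nontrivial absorption is needed and $\psi=\phi^{j_0}$ is a $B$-extremizer directly. You should redo the Schr\"odinger-regime step to track this Jacobian factor and extract the strict inequality, rather than plan to absorb the map.

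A smaller point: your verification of $A_1\geq A_0$ via ``continuity of the norms in $\mu$'' is not obviously justified; the paper instead tests against near-extremizers for $A_0$ and invokes \eqref{E:S1 to S0}.
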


The analogue of the final conclusion of Theorem~\ref{T:dichotomy 0} for $A_1$ is the following.  If $A_1=B$, $h_n \to \infty$, $|\xi_n| \to 0$, $\psi$ is an extremizer for $B$, and $\phi_n$ satisfies \eqref{E:to B extreme 1}, then $\phi_n$ is an extremizing sequence for $A_1$.  If $A_1=A_0$, then $A_0 \geq B$, so the inequality is strict in \eqref{E:A0 B}, which implies that extremizers exist for $A_0$.  Furthermore, in this case, if $h_n \to 0$, $\xi_n \equiv 0$, $\psi$ is an extremizer for $A_0$, and $\phi_n$ satisfies \eqref{E:to B extreme 1}, then $\phi_n$ is an extremizing sequence for $A_1$.  

The proofs of these theorems will rely on the linear profile decomposition and the following lemmas.

\begin{lemma} \label{L:limit S0}  
Let $\phi \in L^2_x(\R^d)$.  If $(a_n)$ is a sequence in $\R^d$ with $|a_n| \to \infty$, then 
\begin{equation} \label{E:limit S0}
\begin{aligned}
&\lim_{n \to \infty}\||\nabla|^{\frac d{d+2}} e^{it\Delta^2} (e^{i(\cdot)a_n}\phi) \\&\qquad
- |a_n|^{\frac d{d+2}} e^{i(x \cdot a_n + t|a_n|^4)} e^{-i|a_n|^2t \Delta} (\phi \circ \ell_{a_n})(\ell_{a_n}^{-1}(x+4t|a_n|^2a_n))\|_{L^{\frac{2(d+2)}d}_{t,x}}  = 0,
\end{aligned}
\end{equation}
with $\ell_a$ as in \eqref{E:ell a}.
\end{lemma}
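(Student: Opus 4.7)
The plan is to approximate the fourth order propagator by a Schrödinger propagator via a Taylor expansion of $|\xi|^4$ near $\xi = a_n$, and then to upgrade pointwise convergence to $L^{\frac{2(d+2)}{d}}_{t,x}$-convergence via a dominated convergence argument. Denote the two quantities inside the norm in \eqref{E:limit S0} by $A_n(t,x)$ and $B_n(t,x)$. By the Strichartz inequality applied to both sides (each is bounded in $L^{\frac{2(d+2)}{d}}_{t,x}$ by $C\|\phi\|_{L^2_x}$ uniformly in $n$), density allows me to reduce to $\phi \in \mathcal{S}(\R^d)$ with $\hat\phi$ compactly supported in $\R^d\setminus\{0\}$.

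The key algebraic identity is $|a_n+\eta|^4 = |a_n|^4 + 4|a_n|^2(a_n\cdot\eta) + |a_n|^2|\ell_{a_n}\eta|^2 + R_n(\eta)$ with $R_n(\eta) := 4(a_n\cdot\eta)|\eta|^2 + |\eta|^4$; the definition \eqref{E:ell a} of $\ell_{a_n}$ is precisely what is required to turn the quadratic-in-$\eta$ coefficient into $|a_n|^2|\ell_{a_n}\eta|^2$. Writing $A_n$ via the Fourier transform and substituting $\xi = a_n + \eta$, and expanding $B_n$ similarly via the substitution $\zeta = \ell_{a_n}\eta$ in the Fourier representation of $e^{-i|a_n|^2 t\Delta}(\phi\circ\ell_{a_n})$, I would find that both expressions take the form of a common modulation $e^{i(x\cdot a_n + t|a_n|^4)}$ times a common oscillatory integral in $\eta$, and differ only by the replacement of the factor $|a_n+\eta|^{d/(d+2)}e^{itR_n(\eta)}$ on the $A_n$ side by $|a_n|^{d/(d+2)}$ on the $B_n$ side. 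After rescaling $s = |a_n|^2 t$, $w = \ell_{a_n}^{-1}(x+4t|a_n|^2 a_n)$, and changing variables $\zeta = \ell_{a_n}\eta$ in the frequency integral, the norm $\|A_n - B_n\|_{L^{\frac{2(d+2)}{d}}_{t,x}}$ becomes (up to dimensional constants) $|a_n|^{-d/(d+2)}\|V_n\|_{L^{\frac{2(d+2)}{d}}_{s,w}}$, where $V_n(s,w) = \int e^{iw\cdot\zeta}e^{is|\zeta|^2}\Psi_n(s,\zeta)\,d\zeta$ is a standard Schrödinger oscillatory integral whose amplitude $\Psi_n(s,\zeta)$ is supported in the uniformly bounded set $\ell_{a_n}(\supp\hat\phi)$ and there satisfies $|\Psi_n(s,\zeta)| \lesssim (1+|s|)|a_n|^{d/(d+2)-1}$, using that $r_n(\zeta) := R_n(\ell_{a_n}^{-1}\zeta)/|a_n|^2 = O(|\zeta|^3/|a_n|)$ on that support.

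A triangle-inequality estimate on $V_n$ yields $|V_n(s,w)|/|a_n|^{d/(d+2)} \to 0$ pointwise in $(s,w)$. To upgrade this to $L^{\frac{2(d+2)}{d}}_{s,w}$-convergence, I would invoke Lemma~\ref{L:ptwise ub} (with $\mu = 0$) to dominate $|A_n|/|a_n|^{d/(d+2)}$, after the same rescaling, by $2^{d/(2(d+2))}v_\phi(2s, \ell_{a_n}w)$, and carry out the analogous non-stationary/stationary phase dichotomy for the Schrödinger propagator $e^{-i|a_n|^2 t\Delta}(\phi\circ\ell_{a_n})$ to dominate $|B_n|/|a_n|^{d/(d+2)}$ by a similar envelope; since the singular values of $\ell_{a_n}$ all lie in the fixed interval $[\sqrt{2},\sqrt{6}]$, both envelopes are controlled by a single radial function $\tilde v(s,w)$ of the form $C\langle w\rangle^{-d/2}\chi_{|w|\gtrsim|s|} + C\langle s\rangle^{-d/2}\chi_{|w|\lesssim|s|}$, which lies in $L^{\frac{2(d+2)}{d}}_{s,w}$ for $d \geq 2$. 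The dominated convergence theorem then finishes the proof.

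The main technical point in the plan is the construction of the uniform envelope on the Schrödinger side: while Lemma~\ref{L:ptwise ub} yields the fourth order piece directly, its Schrödinger analogue has to be carried out by hand. This is routine given that $\phi \circ \ell_{a_n}$ has Schwartz seminorms and Fourier-support diameter bounded uniformly in $n$, so that the standard non-stationary/stationary phase dichotomy produces the required envelope with constants independent of $n$.
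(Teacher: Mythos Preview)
Your proposal is correct and follows essentially the same approach as the paper's own proof: reduce by density to Schwartz $\phi$ with compact frequency support away from $0$, expand $|a_n+\eta|^4$ to isolate the quadratic term $|a_n|^2|\ell_{a_n}\eta|^2$, obtain pointwise convergence of the difference, and invoke dominated convergence using an envelope in $L^{\frac{2(d+2)}d}$ built from the stationary/nonstationary phase dichotomy. The differences are purely organizational: the paper first replaces $|\nabla|^{\frac d{d+2}}$ by $|a_n|^{\frac d{d+2}}$ as a separate $L^2$-step and then carries out the stationary-phase analysis directly on the single combined oscillatory integral in the original (time-rescaled) $(t,x)$ variables, whereas you keep that multiplier discrepancy inside the amplitude $\Psi_n$, pass to the $(s,w)$ variables, and dominate $A_n$ via Lemma~\ref{L:ptwise ub} and $B_n$ via the analogous Schr\"odinger estimate separately.
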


\begin{lemma} \label{L:limit S1}
Let $\phi \in L^2_x(\R^d)$.  Let $(h_n,\xi_n)$ be a sequence in $(0,\infty) \times \R^d$.  Define $g_n\phi(x) = h_n^{-\frac d2}\phi(\frac x{h_n})$.  If $h_n \to 0$ and $\xi_n \equiv 0$,
\begin{equation} \label{E:S1 to S0}
\lim_{n \to \infty} \|\jpn^{\frac d{d+2}} S_1(t) g_n e^{i(\cdot) h_n\xi_n} \phi - |\nabla|^{\frac d{d+2}}e^{it\Delta^2} g_n \phi\|_{L^{\frac{2(d+2)}d}_{t,x}} = 0.
\end{equation}
If $h_n \to \infty$ and $\xi_n \equiv 0$,
\begin{equation} \label{E:S1 to Sch 0}
\lim_{n \to \infty} \|\jpn^{\frac d{d+2}} S_1(t) g_n e^{i(\cdot)h_n\xi_n}\phi - e^{-it\Delta} g_n \phi\|_{L^{\frac{2(d+2)}d}_{t,x}} = 0.
\end{equation}
If $|\xi_n| \lesssim 1$ for all $n$ and $|h_n\xi_n| \to \infty$,
\begin{equation}\label{E:S1 to Sch 1}
\begin{aligned}
&\lim_{n \to \infty} \|\jpn^{\frac d{d+2}} S_1(t) g_n e^{i(\cdot)h_n\xi_n}\phi  \\
&\quad - \jp{\xi_n}^{\frac d{d+2}} e^{i(x\xi_n + t|\xi_n|^4 + t|\xi_n|^2)}e^{-it\Delta}[g_n(\phi\circ\tilde\ell_n)](\tilde\ell_n^{-1}(x+4t|\xi_n|^2\xi_n+2t\xi_n))\|_{L^{\frac{2(d+2)}d}_{t,x}} = 0,
\end{aligned}
\end{equation}
where $\tilde\ell_n(\xi) = \sqrt{6|\xi_n|^2+1}\, (\rm{proj}_{\xi_n}\xi) + \sqrt{2|\xi_n|^2+1}\, (\xi-\rm{proj}_{\xi_n}\xi)$.  If $|\xi_n| \to \infty$ and $|h_n\xi_n| \to \infty$,
\begin{equation} \label{E:S1 to Sch infty}
\begin{aligned}
&\lim_{n \to \infty} \|\jpn^{\frac d{d+2}} S_1(t) g_n e^{i(\cdot)h_n\xi_n}\phi \\ 
& - |\xi_n|^{\frac d{d+2}}e^{i(x\xi_n + t|\xi_n|^4 + t|\xi_n|^2)}e^{-i\frac{|\xi_n|^2}{h_n^2}t\Delta}[g_n(\phi\circ \ell_{n})](\ell_{n}^{-1}(x+4t|\xi_n|^2\xi_n+2t\xi_n))\|_{L^{\frac{2(d+2)}d}_{t,x}} = 0,
\end{aligned}
\end{equation}
with $\ell_n=\ell_{\xi_n}$, using the notation from \eqref{E:ell a}.
\end{lemma}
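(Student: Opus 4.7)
By density and the uniform Strichartz estimates for $S_1$ (namely \eqref{E:Strichartz}), $e^{-it\Delta}$, and $e^{it\Delta^2}$, it suffices to prove each of the four identities for $\phi$ Schwartz with $\hat\phi$ compactly supported and, where needed, bounded away from the origin. The plan in each case is to write both sides as Fourier integrals, to perform a rescaling (and, in \eqref{E:S1 to Sch 1}--\eqref{E:S1 to Sch infty}, also a Fourier translation) under which the $L^{\frac{2(d+2)}{d}}_{t,x}$ norm is invariant, and then to show that the rescaled difference tends to $0$ by combining pointwise Fourier-side convergence with Strichartz-based control.

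For \eqref{E:S1 to S0} and \eqref{E:S1 to Sch 0} I set $\eta=h_n\xi$ together with $(s,y)=(t/h_n^4,x/h_n)$ or $(s,y)=(t/h_n^2,x/h_n)$, respectively. In the first case the rescaled propagator symbol $e^{is(|\eta|^4+h_n^2|\eta|^2)}$ and the rescaled amplitude $(1+|\eta|^2/h_n^2)^{d/(2(d+2))}$ converge, after extracting the dimensionally correct factor of $h_n^{-d/(d+2)}$, to the symbol $e^{is|\eta|^4}|\eta|^{d/(d+2)}$ of $|\nabla|^{d/(d+2)}e^{is\Delta^2}$; in the second case $e^{is(|\eta|^4/h_n^2+|\eta|^2)}$ and $(1+|\eta|^2/h_n^2)^{d/(2(d+2))}$ converge to $e^{is|\eta|^2}$, the symbol of $e^{-is\Delta}$. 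For \eqref{E:S1 to Sch 1} and \eqref{E:S1 to Sch infty} the right substitution is $\eta=h_n(\xi-\xi_n)$, which places the Fourier support of $g_n[e^{i(\cdot)h_n\xi_n}\phi]$ at $\eta\in\supp\hat\phi$. A Taylor expansion of $|\xi|^4+|\xi|^2$ about $\xi=\xi_n$ yields the constant $|\xi_n|^4+|\xi_n|^2$, a linear term $(4|\xi_n|^2+2)\xi_n\cdot\eta/h_n$ that combines with the $x\cdot\eta/h_n$ contribution to realise the shift $4t|\xi_n|^2\xi_n+2t\xi_n$, a quadratic piece $Q(\eta)/h_n^2$ with $Q(\eta)=(2|\xi_n|^2+1)|\eta|^2+4(\xi_n\cdot\eta)^2$, and cubic/quartic remainders of size $O(|h_n\xi_n|^{-1})$ relative to the quadratic. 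A direct computation shows $Q(\eta)=|\tilde\ell_n\eta|^2$, so the change of variable $\zeta=\tilde\ell_n\eta$ converts the quadratic phase into the Schr\"odinger phase $t|\zeta|^2/h_n^2$, and the Jacobian $|\det\tilde\ell_n|^{-1}$ combines with Fourier inversion to produce exactly $e^{-it\Delta}[g_n(\phi\circ\tilde\ell_n)]$ evaluated at $\tilde\ell_n^{-1}(x+4t|\xi_n|^2\xi_n+2t\xi_n)$, yielding \eqref{E:S1 to Sch 1}. For \eqref{E:S1 to Sch infty} one further uses $\tilde\ell_n=|\xi_n|\ell_n(1+O(|\xi_n|^{-2}))$ and $\jp{\xi_n}/|\xi_n|\to 1$; the remaining $|\xi_n|^2$ is absorbed into the Schr\"odinger time as $t\mapsto|\xi_n|^2 t/h_n^2$.

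The main obstacle is upgrading this pointwise Fourier-side convergence to convergence in $L^{\frac{2(d+2)}{d}}_{t,x}$. I split each difference $\mathrm{LHS}-\mathrm{RHS}$ into two kinds of residuals. Type (i) is a linear propagator applied to an $L^2_x$-vanishing data sequence (covering, for example, the amplitude defect $(\jp{\xi_n+\eta/h_n}^{d/(d+2)}-\jp{\xi_n}^{d/(d+2)})\hat\phi$ and the approximation $\tilde\ell_n\approx|\xi_n|\ell_n$), and is controlled directly by Strichartz. Type (ii) is a propagator defect of the form $(e^{it\Phi_n(\xi)}-e^{it\Phi(\xi)})\hat\phi(\xi)$ with $\Phi_n\to\Phi$ pointwise on the compact support of $\hat\phi$; here I split the time integration into $\{|t|\le T\}$ and $\{|t|>T\}$, using the tail of Strichartz (uniformly in $n$) on the latter, and on the former combining the uniform pointwise smallness of the integrand with Bernstein's inequality (valid because the integrand has uniformly compact Fourier support) to dominate the $L^{\frac{2(d+2)}{d}}_x$ norm by a constant times the $L^2_x$ norm, which vanishes by dominated convergence. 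A secondary bookkeeping task, most delicate for \eqref{E:S1 to Sch infty}, is to verify that the Jacobian $|\det\tilde\ell_n|\sim|\xi_n|^d$ cancels against the $|\xi_n|^{d/(d+2)}$ amplitude and the dilation of $g_n$ to recover the exact right-hand side.
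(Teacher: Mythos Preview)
Your overall scheme---density reduction, rescaling and (for the last two cases) frequency translation, then splitting the difference into an amplitude defect and a phase defect---matches the paper's, and your Type~(i) handling via Strichartz applied to an $L^2_x$-vanishing data sequence is correct.

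The gap is in your Type~(ii) argument.  The phrase ``the tail of Strichartz (uniformly in $n$)'' is doing work it cannot do: the Strichartz inequality only bounds the \emph{total} $L^{\frac{2(d+2)}d}_{t,x}$ norm of $e^{it\Phi_n(D)}\psi$ by $C\|\psi\|_{L^2_x}$ uniformly in $n$; it says nothing about where that mass sits in $t$.  There is no a~priori reason why $\sup_n\|e^{it\Phi_n(D)}\psi\|_{L^{\frac{2(d+2)}d}(\{|t|>T\})}\to 0$ as $T\to\infty$---in principle the $n$th term could carry a fixed amount of norm out to $|t|\sim T_n\to\infty$.  Your Bernstein argument on $\{|t|\le T\}$ is fine, but without uniform tail control the $\varepsilon/2$ argument does not close.

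What actually furnishes that uniform tail is a uniform-in-$n$ dispersive bound: on the compact set $\supp\hat\phi$ (bounded away from $0$), the Hessians of the rescaled phases $\Phi_n$ are uniformly nondegenerate, so stationary phase gives $\|e^{it\Phi_n(D)}\psi\|_{L^\infty_x}\lesssim (1+|t|)^{-d/2}$ uniformly in $n$, and combined with the trivial $L^\infty$ bound and rapid spatial decay off the propagation set one obtains a single dominating function in $L^{\frac{2(d+2)}d}_{t,x}$.  This is exactly the route the paper takes: it constructs that majorant and then applies dominated convergence directly to $(t,x)\mapsto\int(e^{i\Phi_n}-e^{i\Psi_n})\hat\phi\,d\xi$, bypassing the time-splitting altogether.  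Your argument can be repaired by inserting this stationary-phase step to justify the uniform tail (after which the splitting becomes redundant), but as written the proposal is incomplete at precisely the point where the analytic content lies.
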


The lemmas will be proved at the end of this section.  Before proceeding to their proofs, we show how Theorem~\ref{T:dichotomy 0} can be proved from Lemma~\ref{L:limit S0} and indicate how to adapt this proof for Theorem~\ref{T:dichotomy 1}.

\begin{proof}[Proof of Theorem~\ref{T:dichotomy 0}]
Let $(h_n,\xi_n,x_n,t_n)$ be a sequence of parameters with $|h_n \xi_n| \to \infty$ and let $\psi$ be an extremizer for $B$.  Assume that the sequence $\{\phi_n\} \subset L^2_x$ satisfies
$$
\lim_{n \to \infty} \|\phi_n - e^{it_n\Delta^2} g_n(e^{i(\cdot)h_n \xi_n} \psi \circ \ell_{h_n \xi_n}^{-1})\|_{L^2_x} = 0.
$$
By the Strichartz inequality \eqref{E:Strichartz}, changes of variables, Lemma~\ref{L:limit S1}, and the assumption that $\psi$ is an extremizer, 
\begin{equation} \label{E:A0 B computation}
\begin{aligned}
A_0 
&\geq \lim_{n \to \infty}\frac{\||\nabla|^{\frac{d}{d+2}}e^{it\Delta^2}\phi_n\|_{L^{\frac{2(d+2)}d}_{t,x}}}{\|\phi_n\|_{L^2_x}} 
= \lim_{n \to \infty} \frac{\||\nabla|^{\frac{d}{d+2}} e^{i(t+t_n)\Delta^2} g_n(e^{i(\cdot)h_n\xi_n}(\psi \circ \ell_{h_n\xi_n}^{-1}))\|_{L^{\frac{2(d+2)}d}_{t,x}}}{ \|e^{it_n\Delta^2} g_n(e^{i(\cdot)h_n \xi_n} \psi \circ \ell_{h_n \xi_n}^{-1})\|_{L^2_x}}  \\
&= 3^{-\frac 14}2^{-\frac d4} \|\psi\|_{L^2_x}^{-1}\lim_{n \to \infty} \||\nabla|^{\frac{d}{d+2}} e^{it\Delta^2} e^{i(\cdot)h_n \xi_n} (\psi \circ \ell_{h_n \xi_n}^{-1})\|_{L^{\frac{2(d+2)}d}_{t,x}} \\
&= 3^{-\frac14}2^{-\frac d4} \|\psi\|_{L^2_x}^{-1}\lim_{n \to \infty} \|(|h_n \xi_n|^{\frac{d}{d+2}} e^{-i|h_n\xi_n|^2t\Delta}\psi) \circ \ell_{h_n\xi_n}^{-1}\|_{L^{\frac{2(d+2)}d}_{t,x}} \\
& = 3^{-\frac1{2(d+2)}}2^{-\frac{d}{2(d+2)}} \|\psi\|_{L^2_x}^{-1}\|e^{-it\Delta}\psi\|_{L^{\frac{2(d+2)}d}_{t,x}} = 3^{-\frac1{2(d+2)}}2^{-\frac{d}{2(d+2)}}B.
\end{aligned}
\end{equation}
This verifies \eqref{E:A0 B}.  Conversely, if equality holds in \eqref{E:A0 B}, then it holds everywhere in the computation above, establishing the final conclusion of the theorem.  

In the other direction, let $\{\phi_n\}$ be an $L^2_x$-normalized extremizing sequence for $A_0$.  By Theorem~\ref{thm:linear-profile}, there exist sequences $\{\phi^j\}_{j \geq 1}$, $\{w_n^j\}_{j \geq 1, n \geq 1}$, and parameters $(h_n^j,\xi_n^j,t_n^j)_{j \geq 1, n \geq 1}$ such that for each $j$, $|h_n^j \xi_n^j| \to \infty$ or $h_n^j \xi_n^j \equiv 0$ and such that after passing to a subsequence,
$$
\phi_n = \sum_{j=1}^l e^{it_n^j \Delta^2} g_n^j(e^{i(\cdot)h_n^j \xi_n^j} \phi^j) + w_n^l,
$$
where \eqref{eq:error-term}, \eqref{E:L2 decoup}, and \eqref{E:Lp decoup} hold.

Therefore,
\begin{align*}
A_0^{\frac{2(d+2)}d} &= \lim_{n \to \infty} \||\nabla|^{\frac{d}{d+2}} e^{it\Delta^2} \phi_n\|_{L^{\frac{2(d+2)}d}_{t,x}}^{\frac{2(d+2)}d} \\
&\leq \limsup_{l \to \infty} \limsup_{n \to \infty} \sum_{1 \leq j \leq l} \||\nabla|^{\frac{d}{d+2}} e^{i(t+t_n^j)\Delta^2}g_n^j(e^{i(\cdot) h_n^j \xi_n^j} \phi^j)\|_{L^{\frac{2(d+2)}d}_{t,x}}^{\frac{2(d+2)}d}\\
&\leq \limsup_{l \to \infty} \limsup_{n \to \infty} \sum_{1 \leq j \leq l} A_0^{\frac{2(d+2)}d} \|e^{i(\cdot)h_n^j \xi_n^j}\phi^j\|_{L^2_x}^{\frac{2(d+2)}d}
= \sum_j A_0^{\frac{2(d+2)}d} \|\phi^j\|_{L^2_x}^{\frac{2(d+2)}d}\\
&\leq A_0^{\frac{2(d+2)}d} \bigl(\sup_j \|\phi^j\|_{L^2_x}^{\frac4d} \bigr)\sum_j \|\phi^j\|_{L^2_x}^2.
\end{align*}
By \eqref{E:L2 decoup}, the right hand side is strictly less than the left hand side (a contradiction) unless there exists $j$ such that $\|\phi^j\|_{L^2_x}=1$.  In this case, there is only one profile and the error terms tend to zero in $L^2_x$:
\begin{equation} \label{E:phi_n mod symm}
\phi_n = e^{it_n\Delta^2} g_n(e^{i(\cdot)h_n \xi_n}\phi) + w_n, \qquad \lim_{n \to \infty}\|w_n\|_{L^2_x} = 0.
\end{equation}

If $h_n\xi_n \equiv 0$, since 
$$
\|e^{it_n\Delta^2} g_n(\phi)\|_{L^2_x} = \|\phi\|_{L^2_x}, \qquad \||\nabla|^{\frac{d}{d+2}}e^{i(t+t_n)\Delta^2} g_n(\phi)\|_{L^{\frac{2(d+2)}d}_{t,x}} = \||\nabla|^{\frac{d}{d+2}}e^{it\Delta^2} \phi\|_{L^{\frac{2(d+2)}d}_{t,x}},
$$
$\phi$ is an extremizer for $A_0$.  Thus if $A_0$ does not have an extremizer, every $L^2_x$-normalized extremizing sequence must satisfy (after passing to a subsequence)
\begin{equation}\label{E:wrong limit}
\|\phi_n - e^{it_n\Delta^2} g_n(e^{i(\cdot)h_n \xi_n}\phi)\|_{L^2_x} \to 0,
\end{equation}
for some function $\phi \in L^2_x$ and parameters $(h_n,\xi_n,t_n,x_n)$ with $|h_n\xi_n| \to \infty$.  By the essentially the same computation as \eqref{E:A0 B computation}, this implies that $A_0 \leq 3^{-\frac1{2(d+2)}}2^{-\frac{d}{2(d+2)}}B$, and hence that equality holds in \eqref{E:A0 B}.

Thus it remains to show that if $\{\phi_n\}$ is an $L^2_x$-normalized extremizing sequence for $A_0$, \eqref{E:A0 B} holds with equality, and \eqref{E:wrong limit} holds for some $\phi$ and $(h_n,t_n,x_n,\xi_n)$ with $|h_n\xi_n| \to \infty$, then \eqref{E:to B extreme 0} holds with $\psi$ an extremizer for $B$.  

Passing to a further subsequence, there exists $\omega \in S^{d-1}$ such that $\frac{h_n\xi_n}{|h_n\xi_n|} \to \omega$.  Let $\psi = \phi \circ \ell_\omega$.  For $a \neq 0$, $\ell_a$ depends only on $\frac a{|a|}$, so $\phi - \psi \circ \ell_{h_n\xi_n}^{-1} \to 0$ in $L^2_x$.  Therefore
$$
\|e^{it_n\Delta^2}g_n(e^{i(\cdot)h_n\xi_n}\phi) - e^{it_n\Delta^2}g_n(e^{i(\cdot)h_n\xi_n}(\psi \circ \ell_{h_n\xi_n}^{-1}))\|_{L^2_x} \to 0,
$$
which implies by \eqref{E:phi_n mod symm} that
$$
\|\phi_n - e^{it_n\Delta^2}g_n(e^{i(\cdot)h_n\xi_n}(\psi \circ \ell_{h_n\xi_n}^{-1}))\|_{L^2_x} \to 0.
$$
That $\psi$ is an extremizer for $B$ follows from the same computations as in \eqref{E:A0 B computation}.  This completes the proof of Theorem~\ref{T:dichotomy 0}.
\end{proof}

\begin{proof}[Adapting the argument for Theorem~\ref{T:dichotomy 1}]
There are two relatively minor differences in the proof of Theorem~\ref{T:dichotomy 1}.  First, $A_1$ must be compared to two operator norms, $A_0$ and $B$.  To obtain the estimate $A_1 \geq B$, we simply take an extremizer $\psi$ for $B$ and use \eqref{E:S1 to Sch 0}, arguing similarly to \eqref{E:A0 B computation}.  Given $\eps>0$, we can show that $A_1 \geq (1-\eps)A_0$ by selecting an $L^2_x$-normalized function $\psi$ satisfying $\||\nabla|^{\frac{d}{d+2}}e^{it\Delta^2}\psi\| \geq (1-\eps)A_0$ and using \eqref{E:S1 to S0}; letting $\eps \to 0$, we see that $A_1 \geq A_0$.  

Second, we must rule out the case in which \eqref{E:to B extreme 1} holds for some $\psi \in L^2_x$ and some sequence of parameters $(h_n,\xi_n,t_n,x_n)$ with $\xi_n \not\to 0$.  Passing to a subsequence and using the fact that spacetime translations do not affect any of the relevant operator norms, it suffices to consider the cases when $(t_n,x_n) \equiv (0,0)$, $|h_n\xi_n| \to \infty$, and either $\xi_n \to \xi_0 \neq 0$ or $|\xi_n| \to \infty$.  If $|\xi_n| \to \infty$, we apply \eqref{E:S1 to Sch infty} and compute 
\begin{align*}
&\||\xi_n|^{\frac d{d+2}}e^{i(x\xi_n + th_n^2|\xi_n|^4 + t|\xi_n|^2)}e^{-i|\xi_n|^2t\Delta}[g_n(\phi\circ \ell_{n})](\ell_{n}^{-1}(x+th_n^2|\xi_n|^2\xi_n+2t\xi_n))\|_{L^{\frac{2(d+2)}d}_{t,x}} \\
&\qquad
\leq 3^{-\frac1{2(d+2)}}2^{-\frac{d}{2(d+2)}}B\|\phi\|_{L^2_x} < B, 
\end{align*}
provided $\xi_n \neq 0$, $\ell_n = \ell_{\xi_n}$ is as in \eqref{E:ell a}, and $\|\phi\|_{L^2_x} \leq 1$.  If $\xi_n \to \xi_0 \neq 0$ and $\|\phi\|_{L^2_x} \leq 1$, we use \eqref{E:S1 to Sch 1} and compute
\begin{align*}
&\limsup_{n \to \infty} \|\jp{\xi_n}^{\frac d{d+2}} e^{i(x\xi_n + th_n^2|\xi_n|^4 + t|\xi_n|^2)}e^{-it\Delta}[g_n(\phi\circ\tilde\ell_n)](\tilde\ell_n^{-1}(x+4th_n^2|\xi_n|^2\xi_n+2t\xi_n))\|_{L^{\frac{2(d+2)}d}_{t,x}} \\
&\qquad \leq \limsup_{n \to \infty} (|\xi_n|^2+1)^{\frac d{d+2}}(6|\xi_n|^2+1)^{-\frac 1{d+2}}(2|\xi_n|^2+1)^{-\frac{d-1}{d+2}}B\|\phi\|_{L^2_x} \\
&\qquad = (|\xi_0|^2+1)^{\frac d{d+2}}(6|\xi_0|^2+1)^{-\frac 1{d+2}}(2|\xi_0|^2+1)^{-\frac{d-1}{d+2}}B\|\phi\|_{L^2_x} < B,
\end{align*}
so this case can be ruled out as well.  
\end{proof}

Finally, we prove the lemmas.

\begin{proof}[Proof of Lemmas~\ref{L:limit S0} and~\ref{L:limit S1}]
We begin by observing that by the change of variables formula and the Strichartz inequality for Schr\"odinger,
\begin{align*}
&\||a_n|^{\frac d{d+2}}e^{i(x\cdot a_n + t|a_n|^4 + t\mu|a_n|^2)} e^{-i|a_n|^2 t\Delta}(\phi \circ \ell_{a_n})(\ell_{a_n}^{-1}(x+4t|a_n|^2a_n))\|_{L^{\frac{2(d+2)}d}_{t,x}}\\
&\qquad = 3^{\frac{d}{4(d+2)}}2^{\frac{d^2}{4(d+2)}}\|e^{-it\Delta}\phi \circ \ell_{a_n}\|_{L^{\frac{2(d+2)}d}_{t,x}} \\
& \qquad  \leq  3^{\frac{d}{4(d+2)}}2^{\frac{d^2}{4(d+2)}} B \|\phi \circ \ell_{a_n}\|_{L^2_x} = 3^{-\frac1{2(d+2)}}2^{-\frac{d}{2(d+2)}}B\|\phi\|_{L^2_x}.
\end{align*}
Similar computations give:
\begin{align*}
&\||\nabla|^{\frac d{d+2}}e^{it\Delta^2} g_n \phi\|_{L^{\frac{2(d+2)}d}_{t,x}} 
\leq A_0 \|\phi\|_{L^2_x}, \qquad 
\|e^{-it\Delta} g_n \phi\|_{L^{\frac{2(d+2)}d}_{t,x}}
\leq B\|\phi\|_{L^2_x},\\
&\|\jp{\xi_n}^{\frac d{d+2}} e^{i(x\xi_n + th_n^2|\xi_n|^4 + t|\xi_n|^2)}e^{-it\Delta}[g_n(\phi\circ\tilde\ell_n)](\tilde\ell_n^{-1}(x+4th_n^2|\xi_n|^2\xi_n+2t\xi_n))\|_{L^{\frac{2(d+2)}d}_{t,x}}\\
&\qquad \leq (|\xi_n|^2+1)^{\frac d{d+2}}(6|\xi_n|^2+1)^{-\frac 1{d+2}}(2|\xi_n|^2+1)^{-\frac{d-1}{d+1}} B\|\phi\|_{L^2_x},\\
&\||\xi_n|^{\frac d{d+2}}e^{i(x\xi_n + th_n^2|\xi_n|^4 + t|\xi_n|^2)}e^{-i|\xi_n|^2t\Delta}[g_n(\phi\circ \ell_{n})](\ell_{n}^{-1}(x+4th_n^2|\xi_n|^2\xi_n+2t\xi_n))\|_{L^{\frac{2(d+2)}d}_{t,x}} \\
&\qquad \leq 3^{-\frac 1{2(d+2)}}2^{-\frac d{2(d+2)}}B\|\phi\|_{L^2_x}.
\end{align*}

In addition, by the Strichartz inequality \eqref{E:Strichartz} for 4th order Schr\"odinger, the operator 
$$
\phi \mapsto D_\mu^{\frac d{d+2}} S_\mu(t)g_n(e^{i(\cdot)a_n} \phi)
$$ 
is also uniformly bounded from $L^2_x$ to $L^{\frac{2(d+2)}d}_{t,x}$, so it suffices to prove the lemmas when $\phi$ is in some dense subset of $L^2_x$.  Thus we may assume that $\phi$ is a Schwartz function with compact frequency support that does not contain 0:
$$
\supp \hat \phi \subseteq \{R^{-1} \leq |\xi| \leq R\}.
$$

Under the hypotheses of Lemma~\ref{L:limit S0}, we assume $|a_n| \geq 2R$ and $|\xi-a_n| \leq R$.  Then $|\xi| \sim |a_n|$, so by the fundamental theorem of calculus,
$$
\bigl|\tfrac{|a_n|^{\frac d{d+2}}}{|\xi|^{\frac d{d+2}}}-1\bigr| \sim |a_n|^{-\frac d{d+2}}\bigl| |a_n|^{\frac d{d+2}}-|\xi|^{\frac d{d+2}}\bigr| \lesssim |a_n|^{-\frac d{d+2}}|a_n|^{-\frac 2{d+2}}R.
$$
Since the function $e^{i(\cdot) a_n}\phi$ has frequency support on $\{|\xi-a_n| \leq R\}$, 
$$
\left\|e^{i(\cdot)a_n} \phi - |a_n|^{\frac{d}{d+2}} D_\mu^{-\frac{d}{d+2}} e^{i(\cdot)a_n}\phi\right\|_{L^2_x} \lesssim |a_n|^{-1}R \to 0.
$$
Therefore, since $D_\mu^{\frac{d}{d+2}} S_\mu(t)$ is a bounded operator from $L^2_x$ to $L^{\frac{2(d+2)}d}_{t,x}$, \eqref{E:limit S0} would follow from 
\begin{equation} \label{E:remove nabla}
|a_n|^{\frac d{d+2}}\|e^{it\Delta^2}(e^{i(\cdot)a_n} \phi) -  e^{i(x \cdot a_n + t(|a_n|^4-|a_n|^2 \Delta))} (\phi \circ \ell_{a_n})(\ell_{a_n}^{-1}(x+4t|a_n|^2a_n))\|_{L^{\frac{2(d+2)}d}_{t,x}}  \to 0.
\end{equation}
Changing variables in $t$, the left hand side of \eqref{E:remove nabla} equals
$$
\|e^{i\tfrac t{|a_n|^2}\Delta^2}e^{i(\cdot)a_n}\phi - e^{i(x a_n + t|a_n|^2)}e^{-it\Delta}(\phi \circ \ell_{a_n})(\ell_{a_n}^{-1}(x+4a_nt))\|_{L^{\frac{2(d+2)}d}_{t,x}}.
$$

Next, we compute
\begin{align*}
&e^{i\tfrac{t}{|a_n|^2}\Delta^2} e^{i(\cdot)a_n} \phi(x) = \int e^{i(x\xi + \frac{t}{|a_n|^2}|\xi|^4)}\hat \phi(\xi-a_n)\, d\xi
 = \int e^{ix(\xi+a_n)}e^{i\frac t{|a_n|^2}|\xi+a_n|^4} \hat \phi(\xi)\, d\xi\\
&\qquad= e^{i(x \cdot a_n + t|a_n|^2)} \int e^{i(x+4ta_n)\xi} e^{it(\frac{|\xi|^4}{|a_n|^2} + \frac{4|\xi|^2\xi a_n}{|a_n|^2} + 2|\xi|^2 + \frac{4(a_n \xi)^2}{|a_n|^2})} \hat \phi(\xi)\, d\xi\\
&\qquad= e^{i(x a_n + t|a_n|^2)} \int e^{i(x + 4ta_n )\xi} e^{it(\frac{|\xi|^4}{|a_n|^2} + \frac{4|\xi|^2\xi a_n}{|a_n|^2} + |\ell_{a_n}(\xi)|^2)} \hat \phi(\xi)\, d\xi.
\end{align*}

Thus \eqref{E:remove nabla} would follow from 
\begin{equation} \label{E:drop translations 0}
\|e^{it(\frac{\Delta^2}{|a_n|^2} + \frac{4i\Delta \nabla a_n}{|a_n|^2} + |\ell_{a_n}(-i\nabla)|^2)} \phi(x) - e^{-it\Delta}(\phi \circ \ell_{a_n}) \circ \ell_{a_n}^{-1}(x)\|_{L^{\frac{2(d+2)}d}_{t,x}} \to 0.
\end{equation}
Since  
\begin{equation} \label{E:deconjugate}
(e^{-it\Delta}(\phi \circ \ell_{a_n})) \circ \ell_{a_n}^{-1} = e^{it|\ell_{a_n}(-i\nabla)|^2}\phi ,
\end{equation}
the limit in \eqref{E:drop translations 0} equals zero if and only if
\begin{equation} \label{E:drop trans conj}
\lim_{n \to \infty} \|[e^{it(\frac{\Delta^2}{|a_n|^2} + \frac{4i\Delta \nabla a_n}{|a_n|^2})}-1] e^{it|\ell_{a_n}(-i\nabla)|^2} \phi\|_{L^{\frac{2(d+2)}d}_{t,x}} = 0, \qtq{if} |a_n| \to \infty.
\end{equation}

Similar computations show that \eqref{E:S1 to S0}, \eqref{E:S1 to Sch 0}, \eqref{E:S1 to Sch 1}, and \eqref{E:S1 to Sch infty} would (respectively) follow from 
\begin{gather} \label{E:S1 to S0'}
\lim_{n \to \infty} \|[e^{-ith_n^2\Delta}-1]e^{it\Delta^2}\phi\|_{L^{\frac{2(d+2)}d}_{t,x}} = 0, \qtq{if} h_n \to 0,
\\ \label{E:S1 to Sch 0'}
\lim_{n \to \infty} \|[e^{i\frac t{h_n^2}\Delta^2}-1]e^{-it\Delta}\phi\|_{L^{\frac{2(d+2)}d}_{t,x}} = 0, \qtq{if} h_n \to \infty,
\\ \label{E:S1 to Sch 1'}
\lim_{n \to \infty} \|[e^{it(\frac{\Delta^2}{h_n^2}+\frac{4i\Delta\nabla\xi_n}{h_n})}-1]e^{it|\tilde\ell_{\xi_n}(-i\nabla)|^2}\phi\|_{L^{\frac{2(d+2)}d}_{t,x}} = 0, \qtq{if} |\xi_n| \lesssim 1,\: h_n \to \infty,
\\ \label{E:S1 to Sch infty'}
\lim_{n \to \infty} \|[e^{it(\frac{\Delta^2}{|h_n\xi_n|^2} + \frac{4i\Delta\nabla\xi_n}{|h_n\xi_n|}-\frac{\Delta}{|\xi_n|^2})}-1]e^{it|\ell_{\xi_n}(-i\nabla)|^2}\phi\|_{L^{\frac{2(d+2)}d}_{t,x}} = 0, \qtq{if} |\xi_n|, |h_n\xi_n| \to \infty.
\end{gather}

Since $\hat\phi$ is smooth with compact support, 
$$
[e^{it(\frac{\Delta^2}{|a_n|^2} + \frac{4i\Delta \nabla a_n}{|a_n|^2}-\frac{\mu\Delta}{|a_n|^2})}-1] e^{it|\ell_{a_n}(-i\nabla)|^2} \phi \to 0
$$
pointwise in $t,x$, so \eqref{E:drop trans conj} will follow from the dominated convergence theorem if we show that there exists a function that dominates each term in the sequence.

Let 
\begin{align*}
\Phi_n(t,x,\xi) &= t(\tfrac{|\xi|^4}{|a_n|^2} + \tfrac{4|\xi|^2 \xi a_n}{|a_n|^2}+\tfrac{\mu|\xi|^2}{|a_n|^2} + |\ell_{a_n}(\xi)|^2) + x\xi,\\
\Psi_n(t,x,\xi) &= t|\ell_{a_n}(\xi)|^2 + x\xi.
\end{align*}
Then the left hand side of \eqref{E:drop trans conj} is the $L^{\frac{2(d+2)}d}_{t,x}$ norm of 
$$
(t,x) \mapsto \int[e^{i\Phi_n(t,x,\xi)} - e^{i\Psi_n(t,x,\xi))}]\hat\phi(\xi)\, d\xi.
$$
Since $\hat\phi$ is smooth with compact support, this quantity is uniformly bounded.  The gradients of the phases are
\begin{align*}
\nabla_\xi \Phi_n(t,x,\xi) &= t(\tfrac{4|\xi|^2\xi}{|a_n|^2} + \tfrac{8\xi\cdot a_n \xi}{|a_n|^2} + \tfrac{4|\xi|^2a_n}{|a_n|^2}+\tfrac{2\mu\xi}{|a_n|^2} + 2 \nabla \ell_{a_n}\circ\ell_{a_n}(\xi)) + x,\\
\nabla_\xi \Psi_n(t,x,\xi) &= 2t \ell_{a_n}\circ\ell_{a_n}(\xi) + x,
\end{align*}
and for $|a_n| \gg (1+R)^2$ and $|x| > 100 Rt$, these are nonvanishing for $\xi \in \supp \hat\phi \subset \{\frac 1R\le |\xi| \leq R\}$.  In particular,
\begin{equation} \label{E:gradients}
|\nabla_\xi \Phi_n(t,x,\xi)|, |\nabla_\xi \Psi_n(t,x,\xi)| \gtrsim |x|, \qquad |x| > 100 Rt.
\end{equation}
Furthermore, the Hessian matrices of the phases satisfy
\begin{equation} \label{E:hessians}
D^2_\xi \Phi_n(t,x,\xi), D^2_\xi \Psi_n(t,x,\xi) = t(O_{\xi,a_n}(\tfrac{R^2}{|a_n|^2}) + 2 \ell_{a_n}^2),
\end{equation}
where $O_{\xi,a_n}(\tfrac{R^2}{|a_n|^2})$ is a matrix whose coefficients are uniformly bounded by $\tfrac{R^2}{|a_n|^2}$, and we are identifying the transformation $\ell_{a_n}$ with its matrix.  Since $2\ell_{a_n}^2$ is uniformly positive definite (indeed, its eigenvalues are $12, 4,\ldots,4$), for $|a_n|$ sufficiently large (depending on $R$), the critical points of the phases must be nondegenerate.  Thus by \eqref{E:gradients}, \eqref{E:hessians}, and the principle of stationary phase (cf.\, Stein \cite[Ch.~VIII]{Stein:1993}),
$$
|\int[e^{i\Phi_n(t,x,\xi)} - e^{i\Psi_n(t,x,\xi))}]\phi(\xi)\, d\xi| \lesssim \bigl(\tfrac1{1+|x|}\bigr)^N\chi_{\{|x| \gg t\}} + \bigl(\tfrac1{1+|t|}\bigr)^{\frac d2} \chi_{\{|x| \lesssim t\}} \lesssim \bigl(\tfrac1{1+|t|+|x|}\bigr)^{\frac{d}2}.
$$
The right hand side of the above inequality is in $L^{\frac{2(d+2)}d}_{t,x}$, so \eqref{E:drop trans conj} does indeed follow by the dominated convergence theorem.  

The limits in \eqref{E:S1 to S0'}, \eqref{E:S1 to Sch 0'}, \eqref{E:S1 to Sch 1'}, and \eqref{E:S1 to Sch infty'} may be verified in a similar manner.  (For \eqref{E:S1 to S0'} and \eqref{E:S1 to Sch 1'}, one also uses that $0 \notin \supp \hat\phi$.)  This completes the proof of Lemmas~\ref{L:limit S0} and~\ref{L:limit S1}.  
\end{proof}

%\bibliography{C:/myDocuments/Research/refs}
%\bibliographystyle{plain}

\end{document}